\newcommand{\nsubset}{\not \subset}
\newcommand{\perfmat}{\operatorname{Perf}}
\newcommand{\ahilb}{\mathop{A\text{-}{\mathrm{Hilb}}}}
\title{Dimer models and the special McKay correspondence}
\author{Akira Ishii and Kazushi Ueda}
\date{}
\begin{document}

\maketitle

\begin{abstract}
We study the behavior of a dimer model
under the operation of removing a corner from the lattice polygon
and taking the convex hull of the rest.
This refines an operation of Gulotta,
and the special McKay correspondence plays an essential role
in this refinement.
As a corollary, we show that for any lattice polygon,
there is a dimer model such that
the derived category of finitely-generated modules
over the path algebra of the corresponding quiver with relations
is equivalent to the derived category of coherent sheaves
on a toric Calabi-Yau 3-fold determined by the lattice polygon.
Our proof is based on a detailed study of relationship
between combinatorics of dimer models and
geometry of moduli spaces,
and does not depend on the result
of Bridgeland, King and Reid
\cite{Bridgeland-King-Reid}.
\end{abstract}

\section{Introduction}

Dimer models are introduced in 1960s
as statistical mechanical models
which include the two-dimensional Ising model
as a special case.
See e.g. \cite{Baxter_ESMSM, Kenyon_IDM}
and references therein
for more on this aspect of dimer models.
In this paper, a {\em dimer model} is
a bicolored graph on a real 2-torus
giving a polygon division of the torus.
A fundamental object associated with a dimer model
from statistical mechanical point of view
is its {\em characteristic polynomial}.
It is a Laurent polynomial in two variables
defined in purely combinatorial way
in terms of {\em perfect matchings}.
The Newton polygon of the characteristic polynomial
is called the {\em characteristic polygon}.

More recently,
string theorists has discovered that
dimer models encode the information of quivers with relations,
and used them to study
supersymmetric quiver gauge theories
in four dimensions
(see e.g. \cite{Kennaway_BT} and references therein).
If a dimer model is {\em non-degenerate},
then the moduli space $\scM_{\theta}$
of stable representations of the corresponding quiver
with dimension vector $(1, \dots, 1)$
with respect to a generic stability parameter $\theta$
in the sense of King \cite{King}
is a smooth toric Calabi-Yau 3-fold
\cite{Ishii-Ueda_08}.
Here, a stability parameter is {\em generic}
if any semi-stable objects are stable.
The Calabi-Yau property of $\scM_\theta$ implies that
the convex hull $\Delta$ of the set of primitive generators
of one-dimensional cones of the fan
describing $\scM_\theta$ as a toric manifold
is a lattice polygon
(i.e. they all lie on a hyperplane).
Moreover, this lattice polygon is known to coincide
with the characteristic polygon
of the dimer model
\cite{Franco-Vegh_MSGTDM, Ishii-Ueda_08}.
Although the structure of the fan is not determined
by this lattice polygon,
any fan structures give equivalent
derived categories of coherent sheaves
\cite{Bondal-Orlov_semiorthogonal, Bridgeland_FDC}.

The quiver associated with a dimer model
is the dual graph of the dimer model,
oriented in such a way that a white node is
on the right of an arrow.
A face of the dimer model
gives a vertex $v$ of the quiver,
which in turn gives
the corresponding tautological line bundle $\scL_v$
on the moduli space $\scM_\theta$.
An edge of the dimer model gives
an arrow $v \to w$ of the quiver,
which corresponds to a morphism $\scL_v \to \scL_w$
of tautological bundles
by the universal morphism
$
 \bC \Gamma \to \End \lb \bigoplus_v \scL_v \rb.
$
These correspondences are summarized
in Table \ref{tb:dictionary1}.

\begin{table}[t]
\centering
\begin{tabular}{c|c|c}
dimer model & quiver & moduli space \\
\hline
face & vertex & tautological line bundle \\
edge & arrow & morphism between tautological line bundles
\end{tabular}
\caption{The correspondence among dimer models, quivers
and moduli spaces}
\label{tb:dictionary1}
\end{table}

Consider the following two conditions:
\begin{itemize}
 \item[(\bfT)]
The tautological bundle $\bigoplus_v \scL_v$
on the moduli space $\scM_\theta$ is a tilting object.
 \item[(\bfE)]
The universal morphism
$
 \bC \Gamma \to \End \lb \bigoplus_v \scL_v \rb
$
is an isomorphism.
\end{itemize}
According to Morita theory for derived category
\cite{Bondal_RAACS, Rickard},
the conditions (\bfT)+(\bfE) imply that the functor
\begin{equation} \label{eq:der_equiv}
 \Phi(-)
   = \bR \Gamma \lb \lb \bigoplus_v \scL_v \rb \otimes - \rb
   : D^b \coh \scM_\theta \to D^b \module \bC \Gamma
\end{equation}
is an equivalence of triangulated categories.

There is a notion of {\em consistency condition}
on a dimer model
\cite{Hanany-Vegh,
Ishii-Ueda_CCDM,
Bocklandt_CCDM},
which ensures the Calabi-Yau property
of the path algebra $\bC \Gamma$
of the quiver with relations
associated with the dimer model
\cite{Mozgovoy-Reineke, Davison, Broomhead}.
An example of a consistent dimer model
comes from a finite abelian subgroup
$A$ of $\SL(3, \bC)$,
where the associated quiver is the McKay quiver.

The Calabi-Yau property of the path algebra $\bC \Gamma$
implies that $\scM_\theta$ is smooth
with the trivial canonical bundle,
and the tautological bundle satisfies
conditions (\bfT)+(\bfE)
by a result of
Bridgeland, King and Reid
\cite{Bridgeland-King-Reid,
Van_den_Bergh_NCR}.
In this paper,
we do not rely on their results and
give an independent proof of these facts
for any consistent dimer model.

%

A {\em corner} of a lattice polygon $\Delta$ is
an extremal point of $\Delta$,
and a {\em side} of $\Delta$ is the interval
between two neighboring corners.
A side is divided into 
{\em primitive side segments},
defined as intervals between adjacent lattice points
on the boundary of $\Delta$.
We reserve the words {\em edge} and {\em vertex}
for an edge of a dimer model
and a vertex of a quiver respectively.

Let $\frakc$ be a corner
of a lattice polygon $\Delta$, and
$\Delta'$ be the convex hull of
the set of lattice points of $\Delta$
other than $\frakc$.
If $\Delta'$ is a lattice polygon
(i.e. if not all lattice points of $\Delta$
other than $\frakc$ lie on a line),
then we say that
the lattice polygon $\Delta'$ is obtained
from the lattice polygon $\Delta$
by removing the corner $\frakc$.

In this paper,
we study the behavior of a dimer model
under the removal of a corner
from the characteristic polygon:

\begin{theorem} \label{th:removal}
Let $G$ be a consistent dimer model
and $\Delta$ be the characteristic polygon of $G$.
Let further $\frakc$ be a corner of $\Delta$ and
$\Delta'$ be the lattice polygon
obtained from $\Delta$ by removing the corner $\frakc$.
Then there is an explicit algorithm
to remove some of the edges from $G$
and produce another dimer model $G'$
satisfying the following two conditions:
\begin{enumerate}
 \item
$G'$ is consistent.
 \item
The characteristic polygon of $G'$ coincides with $\Delta'$.
\end{enumerate}
\end{theorem}

This refines an operation of Gulotta
\cite{Gulotta}
who studied the operation of removing a triangle
from the characteristic polygon.
Since 
\begin{itemize}
 \item
any polygon can be embedded
into a sufficiently large triangle, and
 \item
the McKay quiver gives a consistent dimer model
for any triangle,
\end{itemize}
Theorem \ref{th:removal} gives a constructive proof
of the following:

\begin{corollary} \label{cr:existence}
For any lattice polygon $\Delta$,
there is a consistent dimer model
whose characteristic polygon coincides with $\Delta$.
\end{corollary}

\noindent
Corollary \ref{cr:existence} also follows
from a result of Gulotta
\cite[Theorem 6.1]{Gulotta}
which produces a properly-ordered dimer model
for any lattice polygon,
and a result in \cite[Theorem 1.1]{Ishii-Ueda_CCDM}
which shows that
properly-ordered dimer models are consistent.

Although the algorithm in Theorem \ref{th:removal}
can be stated in a purely combinatorial way,
its motivation comes from geometry of moduli spaces,
where the special McKay correspondence
by Wunram \cite{Wunram}
plays an essential role.

Let $A$ be a finite small subgroup of $\GL_2(\bC)$
and $\ahilb(\bC^2)$ be the Hilbert scheme
of $A$-orbit in $\bC^2$ \cite{Nakamura_HSAGO}.
The Hilbert-Chow morphism
$$
 \pi : \ahilb(\bC^2) \to \bC^2/A = \Spec \bC[x, y]^A
$$
gives the minimal resolution of the quotient singularity
\cite{Ishii_MKG}.
The special McKay correspondence gives
a description of the derived category of coherent sheaves
on $\ahilb(\bC^2)$
in terms of $A$
\cite{Van_den_Bergh_TFNR,
Craw_SMC,
Wemyss_GL2}.

Let $G$ be a consistent dimer model,
$\Delta$ be its characteristic polygon,
and $G'$ be another consistent dimer model
obtained from $G$ by removing a corner $\frakc$ from $\Delta$
as in Theorem \ref{th:removal}.
Let further $\scM_\theta$ be the moduli space of
the quiver $\Gamma$ with relations
associated with the consistent dimer model $G$
and a generic stability parameter $\theta$.
Since $\scM_\theta$ is a smooth toric variety
and $\Delta$ is the convex hull
of primitive generators of one-dimensional cones
of the corresponding fan,
any lattice point of $\Delta$ corresponds
to a divisor in $\scM_\theta$.
A toric divisor $D_\frakc$ of $\scM_\theta$
corresponding to a corner $\frakc$ of $\Delta$
will be called a {\em corner toric divisor}.

\begin{proposition} \label{prop:corner}
Let $G$ be a consistent dimer model and
$\frakc$ be a corner of the characteristic polygon $\Delta$.
Then there is a generic stability parameter $\theta$
and a finite small abelian subgroup $A$
of $\GL_2(\bC)$
satisfying the following:
\begin{itemize}
 \item
There is an open neighborhood
$U_\frakc$
of the corner toric divisor $D_\frakc$ in $\scM_\theta$ and
a commutative diagram
$$
\begin{CD}
 D_\frakc @>>> U_\frakc \\
 @VVV @VV{\varphi}V \\
 \ahilb(\bC^2) @>>> \ahilb(\bC^3)
\end{CD}
$$
where horizontal arrows are closed embeddings and
vertical arrows are isomorphisms.
 \item
For any irreducible representation $\rho$ of $A$,
there is a vertex $v$ of the quiver $\Gamma$
such that the pull-back of the tautological bundles
$\scL_\rho$ on $\ahilb(\bC^3)$ is isomorphic
to the restriction of
$\scL_v$ on $\scM_\theta$;
$$
 \varphi^* \scL_\rho \cong \scL_v|_{U_\frakc}.
$$
\end{itemize}
Here $A \subset \GL_2(\bC)$ is embedded
into $\SL_3(\bC)$ in a natural way.
\end{proposition}

To prove Proposition \ref{prop:corner},
we introduce the notion of
{\em large hexagons}.
A large hexagon is the union of faces of a dimer model,
which is cut out by a pair of zigzag paths.
The tautological line bundles corresponding
to faces of one large hexagon
are isomorphic near the given corner divisor.
A division of a dimer model into large hexagons
gives a coarse graining of the associated quiver
into the McKay quiver for some $A \subset \GL_2(\bC)$.
The correspondence
between combinatorics of dimer models and
geometry of moduli spaces is summarized
in Table \ref{tb:dictionary2}.

\begin{table}[t]
\centering
\begin{tabular}{c|c|c}
dimer model & characteristic polygon & moduli space \\
\hline
perfect matching & lattice point & toric divisor \\
zigzag path & primitive side segment & non-compact torus-invariant curve
\end{tabular}
\caption{The correspondence among
dimer models,
characteristic polygons
and moduli spaces}
\label{tb:dictionary2}
\end{table}

The main result in this paper is the following:

\begin{theorem} \label{th:main}
Let $G$ be a consistent dimer model.
Then for any generic stability parameter $\theta$,
the tautological bundle
$\bigoplus_v \scL_v$ on the moduli space $\scM_\theta$
satisfies the conditions \textup{(\bfT)+(\bfE)}.
\end{theorem}

Theorem \ref{th:main} contains the abelian case
of the main result of Bridgeland, King and Reid
\cite{Bridgeland-King-Reid}.
Our proof is independent of theirs,
and based on Theorem \ref{th:induction} below.

Let $G$ be a consistent dimer model and
$G'$ be another consistent dimer model
obtained from $G$ by removing a corner
from the characteristic polygon as in Theorem \ref{th:removal}.
Choose a stability parameter $\theta$ for $G$
described in Proposition \ref{prop:corner}.
This stability parameter $\theta$ for $G$ naturally induces
a stability parameter $\theta'$ for $G'$, and
let $\scM'_{\theta'}$ be the corresponding moduli space
associated with the dimer model $G'$.
Then $\scM'_{\theta'}$ is naturally
an open subscheme of $\scM_\theta$,
and the complement is exactly the divisor $D_\frakc$;
$$
 \scM'_{\theta'} = \scM_\theta \setminus D_\frakc.
$$
A key to the proof of Theorem \ref{th:main}
is the following:

\begin{theorem} \label{th:induction}
The conditions \textup{(\bfT)+(\bfE)} hold for $\scM_\theta$
if and only if they hold for $\scM'_{\theta'}$.
\end{theorem}

Theorem \ref{th:main} follows
from Theorem \ref{th:induction}
by induction on the number of lattice points
of the characteristic polygon.

The proof of Theorem \ref{th:induction} is based
on a detailed study of the interplay
between combinatorics of dimer models and
geometry of moduli spaces.
The proof also gives the following characterization
of the edges removed in the operation
in Theorem \ref{th:removal},
which explains the geometric origin
of the algorithm:

\begin{proposition} \label{prop:restriction}
The edges removed from $G$
in the operation in Theorem \ref{th:removal}
are exactly those which correspond
to morphisms between tautological bundles
vanishing only on the toric divisor
$D_\frakc \subset \scM_\theta$.
\end{proposition}

The effect of the operation
in Theorem \ref{th:removal}
on various objects is summarized
in Table \ref{tb:removal}.

\begin{table}[t]
\centering
\begin{tabular}{c|c}
object & operation \\
\hline
characteristic polygon &
removing a corner $\frakc \in \Delta$ \\
moduli space &
removing the toric divisor $D_\frakc \subset \scM_\theta$ \\
path algebra &
inverting the arrows
vanishing only on $D_\frakc$ \\
quiver &
contracting the arrows as above \\
dimer model &
removing the edges
dual to the arrows as above
\end{tabular}
\caption{The effect of the operation
in Theorem \ref{th:removal}}
\label{tb:removal}
\end{table}


This paper is organized as follows:
In Section \ref{sc:special-McKay},
we recall the special McKay correspondence
for finite small subgroups of $\GL_2(\bC)$.
In Section \ref{sc:continued-fraction},
we recall the description
of geometry of the minimal resolution of $\bC^2/A$
in terms of continued fraction expansion,
and collect lemmas which will be useful later.
%
In Section \ref{sc:definitions},
we collect basic definitions on dimer models
and associated quivers.
%
%
In Section \ref{sc:consistency},
we recall consistency conditions on dimer models.
In Section \ref{sc:large-hexagon},
we introduce the notion of large hexagons,
which will be our main technical tool.
This will allow us to embed $\ahilb(\bC^3)$
for a suitable $A \subset \GL_2(\bC) \subset \SL_3(\bC)$
into our moduli space.
In Section \ref{sc:non-degeneracy},
we prove that consistent dimer models are non-degenerate.
In Section \ref{sc:corner},
we give a characterization of corner perfect matchings.
In Section \ref{sc:algorithm},
we give an explicit description
of the operation in Theorem \ref{th:removal}.
In Section \ref{sc:preservation-of-consistency},
we prove that
the operation in Theorem \ref{th:removal}
preserves the consistency condition.
In Section \ref{sc:zigzag-polygon},
we show that
the lattice polygon changes as expected
under the operation in Theorem \ref{th:removal}.
This concludes the proof of Theorem \ref{th:removal}.
In Section \ref{sc:polygon},
we prove
Proposition \ref{prop:restriction}.
In Section \ref{sc:injectivity},
we show that
the consistency condition implies
the injectivity of the universal morphism
in condition (\bfE).
Theorem \ref{th:induction}
is proved in Sections \ref{sc:tilting-G-Hilb}--\ref{sc:surj-general},
and
Theorem \ref{th:main} is proved
in Section \ref{sc:der_equiv}.




{\bf Acknowledgment}:
We thank Alastair Craw for
valuable discussions and
suggesting a number of improvements.
We also thank
Nathan Broomhead,
Ben Davison,
Dominic Joyce,
Alastair King,
Diane Maclagan,
Balazs Szendroi,
Yukinobu Toda,
Michael Wemyss and
Masahito Yamazaki
for valuable discussions.
A.~I. is supported by Grant-in-Aid for Scientific Research (No.18540034 and No.21540039).
K.~U. is supported by Grant-in-Aid for Young Scientists (No.18840029 and No.20740037).
A large part of this work has been done
while K.~U. is visiting the University of Oxford,
and he thanks the Mathematical Institute for hospitality
and Engineering and Physical Sciences Research Council
for financial support.

\section{The special McKay correspondence}
 \label{sc:special-McKay}

Let $R := S^A$ be the invariant ring
of the polynomial ring $S = \bC[x_1, \ldots, x_n]$
with respect to the natural action of
a finite small subgroup $A$ of $\GL_n(\bC)$.
For any irreducible representation $\rho$ of $A$,
the invariant part
$
 M_\rho := (S \otimes \rho^\vee)^A
$
is an indecomposable Cohen-Macaulay
(and hence reflexive) $R$-module,
since it a direct summand of a Cohen-Macaulay $R$-module
$S \otimes \rho^\vee$.

The {\em McKay quiver} $\Lambda$ of $A$ is a quiver with relations
whose set of vertices is the set $\Irrep(A)$ of irreducible representations of $A$.
The number $a_{\nu \mu}$ of arrows from a vertex $\mu \in \Irrep(A)$
to another vertex $\nu \in \Irrep(A)$ is given by the multiplicity
in the irreducible decomposition of the tensor product
$$
 \mu \otimes \rho_{\mathrm{Nat}}^\vee
  = \bigoplus_{\nu \in \Irrep(A)} \nu^{\oplus a_{\nu \mu}},
$$
where $\rho_{\mathrm{Nat}} : A \hookrightarrow \GL_n(\bC)$
is the natural representation of $A$
and $(-)^\vee$ denotes the dual representation.
The relations of $\Lambda$ are such that
the path algebra $\bC \Lambda$ is isomorphic to
$\End_R \lb \bigoplus_{\rho \in \Irrep(A)} M_\rho \rb$,
which is Morita equivalent to
$$
 \End_R(S)
  \cong \End_R \lb \bigoplus_{\rho \in \Irrep(A)} M_\rho^{\oplus \dim \rho} \rb
  \cong S \rtimes A.
$$

Now assume that $A$ is a finite small subgroup of $\GL_2(\bC)$,
and let $Y = \ahilb(\bC^2)$ be the Hilbert scheme
of $A$-orbits in $\bC^2$ \cite{Nakamura_HSAGO}.
The Hilbert-Chow morphism
$$
 \pi : Y \to X = \Spec \bC[x, y]^A
$$
gives the minimal resolution of the quotient singularity
\cite{Ishii_MKG}.

\begin{definition-lemma}[Esnault \cite{Esnault_RMQSS}]
Let $\scM$ be a sheaf on $Y$
and $\scM^{\vee}$ be its dual sheaf.
Then there exists a reflexive module $M$ on $X$ such that
$
 \scM \cong \Mtilde := \pi^*M / \text{\it torsion}
$
if and only if the following three conditions are satisfied:
\begin{enumerate}
 \item
$\scM$ is locally-free.
 \item
$\scM$ is generated by global sections.
\item
$H^1((\scM)^{\vee} \otimes \omega_{Y})=0$.
\end{enumerate}
In this case $\scM$ is said to be {\em full}.
\end{definition-lemma}



Let us recall the definition of a tilting object:

\begin{definition} \label{df:tilting}
An object $\scE$ in a triangulated category $\scT$
is {\em acyclic} if
$$
 \Ext^k(\scE, \scE) = 0, \qquad k \ne 0.
$$
It is a {\em generator} if for any object $\scF$,
$$
 \Ext^k(\scE, \scF) = 0
$$
for any $k \in \bZ$ implies $\scF \cong 0$.
An acyclic generator is called a {\em tilting object}.
\end{definition}

A tilting object induces a derived equivalence:

\begin{theorem}[Bondal \cite{Bondal_RAACS}, Rickard \cite{Rickard}]
Let $\scE$ be a tilting object
in the derived category $D^b \coh X$
of coherent sheaves on a smooth quasi-projective variety $X$.
Then $D^b \coh X$ is equivalent to the derived category of
finitely-generated modules
over the endomorphism algebra $\Hom(\scE, \scE)$.
\end{theorem}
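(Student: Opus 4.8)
The final statement is the classical tilting theorem of Rickard and Bondal, so the plan is to assemble the standard ingredients. Set $A = \End(\scE) = \Hom_{D^b \coh X}(\scE,\scE)$. Acyclicity of $\scE$ says precisely that $\RHom(\scE,\scE)$ is concentrated in degree $0$, where it equals $A$; hence the functor $\Phi := \RHom(\scE,-) \colon D^b \coh X \to D^b(\module A)$, where $\module A$ denotes finitely generated right $A$-modules, sends $\scE$ to the free rank-one module $A$. That $\Phi$ indeed lands in the \emph{bounded} derived category of \emph{finitely generated} modules uses smoothness of $X$ (so $\scE$ is a perfect complex and $\RHom(\scE,-)$ has bounded cohomological amplitude) together with the generator hypothesis, which is what forces $A$ to be coherent and each $\Ext^i_X(\scE,\scF)$ to be a finitely generated $A$-module. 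A candidate quasi-inverse is $- \Lotimes_A \scE$.

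First I would prove that $\Phi$ is fully faithful. The crucial observation is tautological: for every $\scF \in D^b \coh X$ and every $i \in \bZ$ there are identifications
\[
\Hom_{D^b(\module A)}(A[i], \Phi\scF) = H^{-i}(\Phi\scF) = \Ext^{-i}_X(\scE,\scF) = \Hom_{D^b\coh X}(\scE[i], \scF),
\]
and one checks the composite is exactly the map induced by $\Phi$. Thus the full subcategory $\scC \subseteq D^b \coh X$ of objects $\scG$ for which $\Phi \colon \Hom(\scG,\scF) \to \Hom(\Phi\scG,\Phi\scF)$ is bijective for \emph{all} $\scF$ contains every shift $\scE[i]$; by the five lemma $\scC$ is triangulated, and since the $\Hom$-groups split over direct sum decompositions, $\scC$ is closed under direct summands. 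Because $\scE$ is a generator and $X$ is smooth quasi-projective — so $D^b \coh X = \perfmat X$ is thickly generated by any object whose right orthogonal vanishes — we conclude $\scC = D^b \coh X$, i.e. $\Phi$ is fully faithful.

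It then remains to see $\Phi$ is essentially surjective. The essential image of a fully faithful exact functor out of an idempotent-complete triangulated category is a thick subcategory of $D^b(\module A)$; it contains $\Phi\scE = A$, hence it contains the thick subcategory $\perfmat A$ generated by $A$. Conversely, smoothness of $X$ yields a uniform vanishing $\Ext^k_X(\scF,\scG) = 0$ for $k > \dim X$, and transporting this through the now-fully-faithful $\Phi$ shows $A$ has finite global dimension; therefore every object of $D^b(\module A)$ is perfect, $\perfmat A = D^b(\module A)$, and $\Phi$ is an equivalence.

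The only genuinely delicate point is the upgrade from the $\perp$-generator hypothesis of Definition \ref{df:tilting} to \emph{thick} generation of $D^b \coh X$ by $\scE$; this is where quasi-projectivity and smoothness of $X$ enter essentially, via the identification $D^b\coh X = \perfmat X$ and the compact generation of $D_{qc}(X)$. That is the step I would write out with care, following Bondal–Van den Bergh; the rest is bookkeeping with the five lemma and standard properties of derived $\Hom$.
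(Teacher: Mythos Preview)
The paper does not prove this theorem at all: it is stated with attribution to Rickard and Bondal and then immediately used, with no argument given. So there is no ``paper's own proof'' to compare against; the theorem functions purely as a black box in section~\ref{sc:special-McKay}.

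Your sketch is the standard route and the ingredients are the right ones. One step deserves more care than you give it: you argue that $A$ has finite global dimension by ``transporting'' the vanishing $\Ext^k_X(\scF,\scG)=0$ for $k>\dim X$ through the fully faithful $\Phi$. As written this is close to circular, since it presumes that arbitrary finitely generated $A$-modules lie in the essential image, which is what you are trying to prove. The clean way to organise this is to first observe that thick generation of $D^b\coh X=\perfmat X$ by $\scE$ forces $\Phi$ to land in $\perfmat A$ and to give an equivalence $D^b\coh X\simeq\perfmat A$; then argue separately (e.g.\ via Noetherianity of $A$ and the bounded cohomological amplitude inherited from $X$, or by invoking the homological characterisation of smoothness for $A$) that $\perfmat A=D^b(\module A)$. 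You flag the Bondal--Van den Bergh step as the delicate one, and that is correct; just be aware the global-dimension step also needs a sentence more than you give it.
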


The following theorem is the McKay correspondence
as a derived equivalence for a finite subgroup of $\SL_2(\bC)$:

\begin{theorem}[{Kapranov and Vasserot \cite{Kapranov-Vasserot},
see also Bridgeland, King and Reid \cite{Bridgeland-King-Reid}}]
When $A$ is a finite subgroup of $\SL_2(\bC)$,
the direct sum of indecomposable full sheaves
is a tilting object
whose endomorphism ring is Morita equivalent
to the crossed product algebra $\bC[x, y] \rtimes A$.
\end{theorem}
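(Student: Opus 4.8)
The plan is to deduce the statement from the derived McKay correspondence, realized by a Fourier--Mukai transform. First I would identify the indecomposable full sheaves on $Y = \ghilb(\bC^2)$ with the tautological bundles. Writing $Z \subset \bC^2 \times Y$ for the universal $G$-cluster, the sheaf $\scR := p_{Y*}\scO_Z$ (which equals $\bR p_{Y*}\scO_Z$, as $Z \to Y$ is finite flat) is locally free of rank $|G|$, and the $G$-action on $\bC^2$ makes it a $G$-equivariant bundle; decomposing it into isotypic components, $\scR \cong \bigoplus_{\rho \in \Irrep G} \scR_\rho \otimes \rho$, defines the tautological bundles $\scR_\rho$, with $\scR_{\rho_0} \cong \scO_Y$ for the trivial representation $\rho_0$. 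Since $\pi \colon Y \to X$ is crepant we have $\omega_Y \cong \scO_Y$, so the third condition of Esnault's criterion reduces to $H^1(\scR_\rho^\vee) = 0$; granting this together with global generation (both of which also follow a posteriori from the equivalence below), the $\scR_\rho$ are precisely the indecomposable full sheaves, and $\bigoplus_\rho \scR_\rho$ is, up to multiplicities, the same object as $\scR$.

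Next I would bring in the Fourier--Mukai functor
\[
\Phi \colon D^b \coh_G(\bC^2) \lto D^b \coh Y, \qquad \Phi(E) = \bR p_{Y*}\bigl(p_{\bC^2}^* E \otimes \scO_Z\bigr)^G ,
\]
and invoke the theorem of Kapranov--Vasserot (the two-dimensional case of Bridgeland--King--Reid) that $\Phi$ is an equivalence of triangulated categories. The source is canonically $D^b \module(G \ltimes \bC[x, y])$, and the free module $G \ltimes \bC[x, y]$ is a projective generator there with endomorphism algebra $G \ltimes \bC[x, y]$; as an equivariant sheaf it is $\scO_{\bC^2} \otimes \bC[G]$, with $\bC[G]$ the regular representation.

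Then I would compute, using flatness of $Z$ over $Y$ and the fact that the global functions on a $G$-cluster form the regular representation,
\[
\Phi\bigl(\scO_{\bC^2} \otimes \bC[G]\bigr) \cong \bigl(\scR \otimes \bC[G]\bigr)^G \cong \bigoplus_{\rho} \scR_\rho^{\oplus \dim \rho} \cong \scR .
\]
Being the image of a tilting object under an equivalence, $\scR$ is a tilting object of $D^b \coh Y$; hence so is the direct sum $\bigoplus_\rho \scR_\rho$ of the indecomposable full sheaves, and $\End\bigl(\bigoplus_\rho \scR_\rho\bigr)$ is Morita equivalent to $\End(\scR) \cong \End_{\coh_G(\bC^2)}\bigl(\scO_{\bC^2} \otimes \bC[G]\bigr) \cong G \ltimes \bC[x, y]$, which is the assertion. (Alternatively one can verify acyclicity and generation directly by a cohomological computation on the resolution: since $X$ is affine and the singularity is rational, the relevant higher direct images vanish, so $\Ext^k(\scR_\rho, \scR_{\rho'})$ is concentrated in degree $0$ where it equals $\Hom_X(M_\rho, M_{\rho'})$ between the corresponding reflexive modules, and summing these recovers $G \ltimes \bC[x, y]$ up to Morita equivalence via the McKay quiver.)

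The step I expect to be the main obstacle is proving that $\Phi$ is an equivalence. This is the technical heart of Kapranov--Vasserot (respectively Bridgeland--King--Reid): one must control the fibre dimensions of $Z \to X$ --- equivalently verify Bridgeland's spanning-and-orthogonality criterion, using crepancy of $\ghilb(\bC^2) \to \bC^2/G$ together with properties of the universal cluster --- and then track the equivariant data carefully enough (distinguishing $\rho$ from $\rho^\vee$, left modules from right) that $\scO_{\bC^2} \otimes \bC[G]$ is genuinely carried to $\scR$. Once this is in place, the remaining steps are a formal transport of structure along the equivalence.
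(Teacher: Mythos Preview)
The paper does not give its own proof of this theorem; it is stated with attribution to Kapranov--Vasserot and Bridgeland--King--Reid and used as background. Your proposal correctly sketches the argument from those references: identify the indecomposable full sheaves with the tautological summands $\scR_\rho$ of the universal bundle $\scR$, invoke the Fourier--Mukai equivalence $\Phi\colon D^b\coh_G(\bC^2)\to D^b\coh Y$, and transport the tilting object $\scO_{\bC^2}\otimes\bC[G]$ across to obtain $\scR$ with endomorphism ring $G\ltimes\bC[x,y]$. This is exactly the route taken in the cited papers, so there is nothing to compare against here; your identification of the equivalence $\Phi$ itself as the substantive input is accurate.
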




This is no longer true when $A \nsubset \SL_2(\bC)$,
and one has to restrict the class of full sheaves.
The following theorem is due to Wunram:

\begin{theorem}[{Wunram \cite[Main Result]{Wunram}}]
 \label{th:Wunram-1}
Let
$
 C = \bigcup_{i=1}^r C_i
$
be the decomposition
of the exceptional set $C$
into irreducible components.
Then for every curve $C_i$
there exists exactly one indecomposable reflexive module
$M_i$
such that the corresponding full sheaf
$\Mtilde_i = \pi^* M_i / \text{\it torsion}$
satisfies the conditions $H^1((\Mtilde)^{\vee})=0$
and
$$
 c_1(\Mtilde_i) \cdot C_j = \delta_{ij}.
$$
\end{theorem}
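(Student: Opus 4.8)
The plan is to work locally, replacing $X$ by the spectrum of the completed local ring at the singular point, and to exploit the correspondence recalled above between indecomposable reflexive modules $M$ on $X$, indecomposable full sheaves $\Mtilde = \pi^* M$ modulo torsion on $Y$, and irreducible representations of $G$. Write $E = \bigcup_{i=1}^{r} E_i$. Since $X$ has a rational singularity, $Y$ is rational over the singular point and $\Pic Y \cong H^2(Y, \bZ)$ is freely generated by the classes of the $E_i$, so the intersection pairing identifies first Chern classes with elements of $\bZ^r$; let $e_1, \dots, e_r$ denote the corresponding standard basis. First I would assemble the numerical toolkit for a full sheaf $\scF = \Mtilde$: global generation gives $R^1 \pi_* \scF = 0$ and $\pi_* \scF = M$, while $\det \scF$ is globally generated, hence nef, so $c_1(\scF) \cdot E_i \ge 0$ for all $i$, with a description of the equality case. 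Combined with Riemann--Roch on $Y$ and Serre duality (using $H^1(\scF^\vee \otimes \omega_{Y}) = 0$ from fullness, and $H^1(\scF^\vee) = 0$ for the special sheaves), this expresses $\dim H^0(\scF)$, $\rank \scF$, and the vector $(c_1(\scF) \cdot E_i)_i$ in terms of one another.

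The engine is the syzygy sequence. If $a = \mu(M)$ is the minimal number of generators, the presentation $0 \to M_1 \to \scO_X^{\oplus a} \to M \to 0$ sheafifies, using torsion-freeness of $\scF$, to a short exact sequence $0 \to \scG \to \scO_Y^{\oplus a} \to \scF \to 0$ of locally free sheaves with $\scG$ again full; dualizing (legitimate since $\scF$ is locally free) gives $0 \to \scF^\vee \to \scO_Y^{\oplus a} \to \scG^\vee \to 0$. Taking $\pi_*$ and cohomology of both sequences produces additive relations among the $c_1 \cdot E_i$, the ranks, and the groups $H^1(\scF^\vee)$ and $H^1(\scG^\vee)$. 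From these I would extract the key dichotomy: an indecomposable full sheaf $\scF$ with $H^1(\scF^\vee) = 0$ is either $\scO_Y$, with trivial first Chern class, or satisfies $c_1(\scF) \cdot E_j = \delta_{ij}$ for exactly one index $i$. The mechanism is that if the nonnegative vector $(c_1(\scF) \cdot E_j)_j$ were ``larger'' than a standard basis vector, the syzygy sequence would split off a proper direct summand, contradicting indecomposability; specialness is precisely the vanishing that makes the relevant $\Ext^1$ between the two pieces disappear.

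For uniqueness, if $\scF, \scF'$ are indecomposable special full sheaves with $c_1(\scF) \cdot E_j = c_1(\scF') \cdot E_j = \delta_{ij}$, a Riemann--Roch estimate with Serre duality, using specialness to kill the obstruction groups, forces $\Hom(\scF, \scF') \ne 0$, and any nonzero map must be an isomorphism, since otherwise its image and cokernel would refine $e_i$ into a sum of nonzero nonnegative vectors. For existence, fix the unique line bundle $L_i$ on $Y$ with $c_1(L_i) \cdot E_j = \delta_{ij}$ and set $\scF_i$ to be the full sheaf attached to the reflexive hull $(\pi_* L_i)^{\vee\vee}$; one checks that $\scF_i$ is globally generated, has $c_1(\scF_i) \cdot E_j = \delta_{ij}$ (the corrections coming from the reflexive hull and from killing torsion are supported on $E$ and, by the toolkit above, leave these intersection numbers unchanged), is special, and is indecomposable once free direct summands are ruled out; as a consistency check this produces exactly $r+1$ indecomposable special full sheaves, namely $\scO_Y$ together with $\scF_1, \dots, \scF_r$, matching the count of special representations from Section~\ref{sc:special-McKay}. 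The main obstacle is the dichotomy of the second paragraph: squeezing out of the syzygy sequences the exact numerical inequality that forces a splitting, and doing the bookkeeping of exceptional cycles — in particular controlling how passing to reflexive hulls and discarding torsion shifts first Chern classes by effective exceptional divisors — is the technical heart of the argument.
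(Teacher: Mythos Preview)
The paper does not prove this theorem at all: it is stated with attribution to Wunram and used as a black box, along with the companion results Theorems~\ref{th:Wunram-2} and~\ref{th:Wunram-3}. So there is no ``paper's own proof'' to compare against; your proposal is a reconstruction of Wunram's original argument rather than of anything in this paper.

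As a sketch of Wunram's proof, your outline is broadly on the right track --- the syzygy sequence and the numerical control of $c_1(\scF)\cdot E_j$ via global generation and the vanishing $H^1(\scF^\vee)=0$ are indeed the main ingredients. Two points deserve more care. First, your existence construction is shaky: setting $\scF_i$ to be the full sheaf attached to $(\pi_* L_i)^{\vee\vee}$ does not obviously reproduce the intersection numbers $\delta_{ij}$, since passing through $\pi_*$, reflexive hull, and then $\pi^*/\text{torsion}$ can change the rank and the Chern class in ways you have not controlled; Wunram instead runs an inductive construction through the syzygy chain, or equivalently uses the Gonzalez-Sprinberg--Verdier transform of a specific representation. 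Second, the ``dichotomy'' step --- that a nontrivial special indecomposable full sheaf must have $c_1$ equal to a single $e_i$ --- is where the real work lies, and your sentence about a proper direct summand splitting off needs the precise vanishing of the relevant $\Ext^1$ spelled out; this is exactly the content of Wunram's Lemma on the structure of the syzygy sequence, and it does not follow from specialness alone without the additional cohomological identities you allude to.
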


A full sheaf is said to be {\em special}
if there is an index $1 \le i \le r$ such that $\scM = \scM_i$
or it is isomorphic to the structure sheaf $\scO_Y$.
The special full sheaf $\scO_Y$ corresponds to the trivial representation
and is denoted by $\scM_0$.
Special full sheaves are characterized as follows:

\begin{theorem}[{Wunram \cite[Theorem 1.2]{Wunram}}]
 \label{th:Wunram-2}
An indecomposable full sheaf $\scM$ is special
if and only if $H^1(\scM^{\vee})=0$.
\end{theorem}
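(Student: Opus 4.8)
The ``only if'' direction is immediate from the results recalled above. If $\scM \cong \scM_0 = \scO_Y$, then $\scM^\vee \cong \scO_Y$ and $H^1(\scO_Y) = H^0(X, R^1 \pi_* \scO_Y) = 0$, since $X$ is affine and quotient singularities are rational. If $\scM \cong \scM_i$ for some $1 \le i \le r$, then the vanishing $H^1(\scM_i^\vee) = 0$ is part of the statement of Theorem \ref{th:Wunram-1}. So the content is the converse.

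Let $\scM = \Mtilde$ be an indecomposable full sheaf of rank $n$ with $H^1(\scM^\vee) = 0$, where $M = \pi_* \scM$ is the corresponding indecomposable reflexive $\scO_X$-module. Since $X$ is affine and the fibres of $\pi$ are at most one-dimensional, one has $H^k(\scM^\vee) = H^0(X, R^k \pi_* \scM^\vee)$ and $R^k \pi_* \scM^\vee = 0$ for $k \ge 2$; in particular the hypothesis is equivalent to $R^1 \pi_* \scM^\vee = 0$, which may be checked after completion at the singular point (and which, via local duality, is also equivalent to $\mathcal{E}xt^1_{\scO_X}(M, \scO_X) = 0$). Because $\scM$ is globally generated, its restriction to each exceptional curve $E_i \cong \bP^1$ is a sum of line bundles of non-negative degree, so $D \cdot E_i \ge 0$ for all $i$, where $D := c_1(\scM)$. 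If $D = 0$ then $\scM|_E \cong \scO_E^{\oplus n}$, and lifting a frame on $E$ to global sections (Nakayama) shows $\scM$ is free in a neighbourhood of $E$; as a full sheaf is determined by such a neighbourhood and $\scM$ is indecomposable, this forces $\scM \cong \scO_Y = \scM_0$. It therefore suffices to prove the following claim: \emph{under the hypothesis $H^1(\scM^\vee)=0$, if $D \ne 0$ then $D \cdot E_j = \delta_{ij}$ for a single index $i$.} Granting the claim, $\scM$ satisfies both conditions in Theorem \ref{th:Wunram-1}, so by the uniqueness assertion there $\scM \cong \scM_i$, hence $\scM$ is special.

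To prove the claim, the plan is to present $\scM$ by global generation. A minimal set of $\mu = \mu(M)$ generators of $M = H^0(Y,\scM)$ gives a short exact sequence
$$
 0 \lto \scN \lto \scO_Y^{\oplus \mu} \lto \scM \lto 0
$$
with $\scN$ locally free of rank $\mu - n$ and $c_1(\scN^\vee) = D$. Dualizing (legitimate since $\scM$ is locally free, so $\mathcal{E}xt^1_{\scO_Y}(\scM,\scO_Y)=0$) yields $0 \to \scM^\vee \to \scO_Y^{\oplus \mu} \to \scN^\vee \to 0$, and the long exact cohomology sequence together with $H^1(\scO_Y) = 0$ identifies
$$
 H^1(\scM^\vee) \;\cong\; \Coker\!\bigl( H^0(Y, \scO_Y)^{\oplus \mu} \lto H^0(Y, \scN^\vee) \bigr).
$$
Thus $H^1(\scM^\vee) = 0$ if and only if the $\mu$ tautological sections generate $\pi_* \scN^\vee$ over $\scO_X$. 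Since $\scN^\vee$ is again globally generated (its restriction to each $E_i$ has non-negative degrees), a necessary condition is $\mu(\pi_* \scN^\vee) \le \mu(M)$. One would then compare these two numbers by relating the minimal number of generators of a reflexive module to the first Chern class of the associated full sheaf and the fundamental cycle $Z$, together with Riemann--Roch on $Y$ and rationality of the singularity, to conclude that the inequality — combined with $D \cdot E_i \ge 0$ and $D \ne 0$ — forces $\sum_i D \cdot E_i = 1$, which is exactly the claim.

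The main obstacle is precisely this last comparison. A crude count of generators only produces a bound of the shape $D \cdot Z \le n$, which is necessary but far too weak to force $\sum_i D \cdot E_i = 1$ (for instance it is consistent with $D$ meeting two components): one must exploit the finer structure of $\scN^\vee$ along the chain, respectively tree, of exceptional curves — in the abelian case via the continued-fraction description of the resolution from Section~\ref{sc:continued-fraction} — to see that the defect of $\scN^\vee$ from being a full sheaf is itself visible in $\mu(\pi_*\scN^\vee)$, and that this defect is incompatible with $H^1(\scM^\vee)=0$ unless $D$ is dual to a single curve. It is at this point, and not in the weaker $\omega_Y$-twisted vanishing that defines fullness, that the untwisted hypothesis $H^1(\scM^\vee)=0$ must be used in an essential way.
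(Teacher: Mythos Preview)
The paper does not prove this statement: it is quoted from Wunram with a citation and used as a black box in Section~\ref{sc:special-McKay}. There is therefore no proof in the paper to compare your proposal against.

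Your outline is sound as far as it goes, and you are honest about where it stalls. The ``only if'' direction is indeed immediate from Theorem~\ref{th:Wunram-1} together with rationality of $X$. For the converse, the syzygy presentation and the identification of $H^1(\scM^\vee)$ with the cokernel of $H^0(\scO_Y)^{\oplus\mu}\to H^0(\scN^\vee)$ are the right opening moves, but the gap you name --- passing from a crude bound of the shape $D\cdot Z \le n$ to the sharp conclusion $\sum_i D\cdot E_i = 1$ --- is real and is exactly where the substance lies. A generator count alone will not close it; one needs either the finer structure of the category of reflexive modules on $X$ (as in Wunram's original argument, which also links the vanishing to the tensor condition recorded here as Theorem~\ref{th:Wunram-3}) or, in the cyclic case, the explicit continued-fraction combinatorics of \cite{wunram2} recalled in Section~\ref{sc:continued-fraction}. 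As written, your proposal is a correct framing of the problem rather than a proof.
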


An irreducible representation $\rho$ of $A$ is said to be special
if the corresponding full sheaf
$
 \scM_\rho
  = \pi^*
     \left(
      (\rho^\vee \otimes \bC[x, y])^A
     \right) / \text{torsion}
$
is special.
%

Special full sheaves generate the derived category
of coherent sheaves on $Y$:

\begin{theorem}[{Van den Bergh \cite[Theorem B]{Van_den_Bergh_TFNR}}]
 \label{th:VdB}
The direct sum
of indecomposable special full sheaves
is a tilting object.
\end{theorem}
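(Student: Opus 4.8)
The plan is to verify the two conditions of Definition \ref{df:tilting} for the bundle
$\scT = \scO_Y \oplus \scM_1 \oplus \dots \oplus \scM_r$, where $E = \bigcup_{i=1}^r E_i$ is the exceptional set of $\pi$ (so each $E_i \cong \bP^1$ and $E$ is a tree of such curves) and $\scM_0 = \scO_Y$. The structural fact I would use throughout is that $X$ is affine and the fibres of $\pi$ have dimension at most one, so $R^k\pi_* = 0$ for $k \ge 2$ and $R\Gamma(Y,-) = R\Gamma(X, R\pi_*(-))$ is concentrated in cohomological degrees $0$ and $1$ on any coherent sheaf. Since each $\scM_i$ is locally free (Esnault's Definition-Lemma), $\Ext^k_Y(\scM_i,\scF) = H^k(Y,\scM_i^\vee \otimes \scF)$; in particular $\Ext^{\ge 2}_Y$ between coherent sheaves vanishes automatically, so acyclicity of $\scT$ reduces to $\Ext^1_Y(\scM_i,\scM_j) = 0$ for all $0 \le i,j \le r$.

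For that vanishing I would feed the two defining properties of special full sheaves into one short exact sequence. Since $\scM_j$ is globally generated (condition (2) of Esnault's lemma for $j \ge 1$, trivially for $\scM_0$), pick $0 \to K \to \scO_Y^{\oplus N} \to \scM_j \to 0$ and apply $\Hom_Y(\scM_i,-)$: in the long exact sequence the term $\Ext^1_Y(\scM_i,\scO_Y)^{\oplus N} = H^1(Y,\scM_i^\vee)^{\oplus N}$ vanishes by specialness of $\scM_i$ (Theorem \ref{th:Wunram-2} for $i \ge 1$; rationality of $X$ gives $H^1(\scO_Y) = 0$ for $i=0$), while $\Ext^2_Y(\scM_i,K) = 0$ by the degree bound above. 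Hence $\Ext^1_Y(\scM_i,\scM_j) = 0$ and $\scT$ is acyclic — a formal consequence of the hypotheses.

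The real work is showing $\scT$ is a generator. Suppose $\scF \in D^b\coh Y$ has $R\Hom_Y(\scT,\scF) = 0$; as $X$ is affine this says $R\pi_*(\scM_i^\vee \otimes \scF) = 0$ for every $i$. Since $\scO_Y$ is a summand, already $R\pi_*\scF = 0$, and since $\pi$ is an isomorphism over the smooth locus of $X$ this forces $\scF$ to be supported on $E$. Next I would run the spectral sequence $R^p\pi_*(\scM_i^\vee \otimes \scH^q(\scF)) \Rightarrow R^{p+q}\pi_*(\scM_i^\vee \otimes \scF)$; it has only the two columns $p = 0,1$, hence degenerates at $E_2$, so $R\pi_*(\scM_i^\vee \otimes \scH^q(\scF)) = 0$ for all $i$ and all $q$. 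It therefore suffices to prove that a nonzero coherent sheaf $\scG$ supported on $E$ cannot satisfy $R\pi_*(\scM_i^\vee \otimes \scG) = 0$ for all $i$. Because $\scG$ has proper support, each $R\pi_*(\scM_i^\vee \otimes \scG)$ is a finite complex of finite-dimensional spaces, and the Euler characteristic $\chi(\scM_i^\vee \otimes \scG)$ depends only on the class of $\scG$ in the Grothendieck group $G_0(E)$, which (as $E$ is a tree of $\bP^1$'s) is free of rank $r+1$ on $[\scO_{E_1}],\dots,[\scO_{E_r}]$ and $[\scO_{\mathrm{pt}}]$. Using $\mathrm{rk}\,\scM_i = \dim\rho_i =: d_i$ and Wunram's normalisation $c_1(\scM_i)\cdot E_j = \delta_{ij}$ (Theorem \ref{th:Wunram-1}, with $\scM_0 = \scO_Y$ and $d_0 = 1$), one computes by Riemann--Roch on $\bP^1$ that $\chi(\scM_i^\vee \otimes \scO_{\mathrm{pt}}) = d_i$ and $\chi(\scM_i^\vee \otimes \scO_{E_j}) = d_i - \delta_{ij}$; subtracting $d_i$ times the $i=0$ row from the $i$-th row turns the resulting $(r+1)\times(r+1)$ matrix into one of determinant $\pm 1$. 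So the Euler pairing between $\{[\scM_i^\vee]\}$ and $G_0(E)$ is perfect, and $\chi(\scM_i^\vee \otimes \scG) = 0$ for all $i$ forces $[\scG] = 0$, hence $\scG = 0$. Applying this to each $\scH^q(\scF)$ gives $\scF = 0$.

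The main obstacle, as this outline suggests, is the generation step, and inside it the honest verification of two points: that $G_0$ of the subcategory of coherent sheaves supported on $E$ is the free group claimed above, and that the Euler pairing against the classes of the indecomposable special full sheaves is unimodular — this is precisely where Wunram's normalisation $c_1(\scM_i)\cdot E_j = \delta_{ij}$ is indispensable, and is the genuine content of the special McKay correspondence in this argument. The reductions to the relative situation over the affine base $X$, and the acyclicity statement, are by comparison routine.
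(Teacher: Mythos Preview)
The paper does not prove this theorem; it is stated with a citation to Van den Bergh and used as input. So there is no ``paper's own proof'' to compare with, and your task was really to supply one.

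Your argument is correct. Acyclicity follows exactly as you say: global generation of $\scM_j$ plus $H^1(\scM_i^\vee)=0$ (specialness, with $H^1(\scO_Y)=0$ coming from rationality of $X$) and the automatic vanishing of $R^{\ge 2}\pi_*$ for a map with one-dimensional fibres kill $\Ext^1(\scM_i,\scM_j)$. For generation, the two-column hypercohomology spectral sequence does degenerate at $E_2$, so the reduction to a single cohomology sheaf supported on $E$ is clean. Your $K$-theory step is also fine: the classes $[\scO_{E_1}],\dots,[\scO_{E_r}],[\scO_{\mathrm{pt}}]$ certainly generate $G_0$ of the category of sheaves supported on $E$ (d\'evissage plus the classification of coherent sheaves on $\bP^1$), and your unimodular pairing computation simultaneously shows they are a \emph{basis}, since the composite $\bZ^{r+1}\twoheadrightarrow G_0\to\bZ^{r+1}$ has determinant $\pm1$. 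That a nonzero coherent sheaf on a curve has nonzero class is immediate from the dimension filtration: either some generic rank is positive, or the sheaf is zero-dimensional of positive length.

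For comparison, Van den Bergh's original argument (which applies more generally to any resolution with one-dimensional fibres over a complete local base) obtains generation by checking the $\scM_i$ detect the spanning class $\{\scO_{E_j}(-1)\}_j$ directly via Ext computations, rather than via an Euler-characteristic/unimodularity argument. Your $K$-theoretic route is a pleasant alternative that makes the role of Wunram's normalisation $c_1(\scM_i)\cdot E_j=\delta_{ij}$ very transparent; both approaches ultimately encode the same numerical fact.
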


Let $\scM$ be the direct sum
of indecomposable special full sheaves.
It follows that the derived category $D^b \coh Y$
of coherent sheaves on $Y$ is equivalent
to the derived category
$D^b \module (\End \scM)$
of finitely-generated right modules
over $\End \scM$.
The special McKay correspondence
as a derived equivalence
is studied by Craw \cite{Craw_SMC} and Wemyss \cite{Wemyss_GL2}.
The category $D^b \coh Y \cong D^b \module (\End \scM)$
is an admissible subcategory of
$
 D^b \coh [\bC^2/A]
   \cong D^b \module (\bC[x,y] \rtimes A),
$
whose semiorthogonal complement is generated
by an exceptional collection
\cite{Ishii-Ueda_SMEC}.

\section{Specials and continued fractions}
 \label{sc:continued-fraction}

For relatively prime integers $0<q<n$,
consider the small cyclic subgroup
$
 A = \langle \frac{1}{n}(1, q) \rangle
$
of $\GL_2(\bC)$ generated by
$$
 \frac{1}{n}(1, q)
  = \begin{pmatrix}
     \zeta & 0 \\ 0 & \zeta^q
    \end{pmatrix},
$$
where $\zeta$ is a primitive $n$-th root of unity.
We label the irreducible representations of $A$
by elements $a \in \bZ/n\bZ$ so that
$a$ sends the above generator to $\zeta^{-a}$.
\begin{remark}
$\scM_{\rho}$ in our notation corresponds to $\rho^{\vee}$ by the correspondence in \cite{Wunram}.
So we dualize the labeling of the irreducible representations so that Theorem \ref{theorem:wunram} is of the same form.
\end{remark}

Define integers $r$, $b_1, \dots, b_r$ and $i_0, \dots, i_{r+1}$ as follows:
Put $i_0 := n$, $i_1:=q$ and
define $i_{t+2}$ and $b_{t+1}$ inductively by
\begin{equation} \label{eq:cont-frac}
 i_t = b_{t+1} i_{t+1} - i_{t+2} \quad (0 < i_{t+2} < i_{t+1})
\end{equation}
until we finally obtain $i_r=1$ and $i_{r+1}=0$.
This gives a continued fraction expansion
\begin{equation} \label{eq:continued_fraction}
 \frac{n}{q}
  = b_1 - \cfrac{1}{b_2 - \cfrac{1}{\ddots -\cfrac{1}{b_r}}}
\end{equation}
and $-b_t$ is the self intersection number
of the $t$-th irreducible exceptional curve $C_t$.

For a general representation $d$,
the degrees of the full sheaf $\scL_d$ are given
in the following way:
\begin{theorem}[{Wunram \cite[Theorem]{wunram2}}]
 \label{theorem:wunram}
For an integer $d$ with $0 \le d <n$,
there is a unique expression
$$
d = d_1 i_1 + d_2 i_2 + \dots + d_r i_r
$$
where $d_i \in \bZ_{\ge 0}$ are non-negative integers satisfying
$$
 0 \le \sum_{t>t_0} d_t i_t < i_{t_0}
$$
for any $t_0$.
Then one has
$$
 \deg \scM_d |_{C_t} = d_t
$$
for any $t = 1, \ldots, r$.
\end{theorem}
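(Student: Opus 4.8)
I would split the statement into an arithmetic half, the existence and uniqueness of the expansion $d = d_1 i_1 + \dots + d_r i_r$, and a geometric half, the identification of the digit $d_t$ with $\deg\scM_d|_{C_t}$; the first is routine and the second carries the content. For the arithmetic half, the inequalities $0 \le \sum_{t>t_0} d_t i_t < i_{t_0}$ force the digits to be produced by greedy Euclidean division against the strictly decreasing sequence $i_0 = n > i_1 > \dots > i_r = 1$: setting $e_0 := d$ and, for $t = 1, \dots, r$, $d_t := \lfloor e_{t-1}/i_t \rfloor$ and $e_t := e_{t-1} - d_t i_t$, one gets $d_t \ge 0$, $0 \le e_t < i_t$, and $e_r = 0$ because $i_r = 1$, so that $d = \sum_t d_t i_t$ and $\sum_{t>t_0} d_t i_t = e_{t_0}$ obeys the required bound. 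Uniqueness is an induction on $t_0$: from $\sum_{t\ge t_0} d_t i_t = d_{t_0} i_{t_0} + \sum_{t>t_0} d_t i_t$ with $0 \le \sum_{t>t_0} d_t i_t < i_{t_0}$, both $d_{t_0}$ and the tail are determined by $\sum_{t\ge t_0} d_t i_t$, which the inductive hypothesis shows agrees for any two admissible expansions. The recursion $i_t = b_{t+1} i_{t+1} - i_{t+2}$ is used only to guarantee $i_r = 1$.

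For the geometric half I would pass to the toric model. With $N = \bZ^2 + \bZ\cdot\tfrac1n(1,q)$ the quotient $X = \bC^2/G$ is the affine toric variety of $\sigma = \bR_{\ge 0}e_1 + \bR_{\ge 0}e_2 \subset N_\bR$, and $Y$ is the toric variety of the fan subdividing $\sigma$ along the rays $v_0, v_1, \dots, v_r, v_{r+1}$ through the lattice points on the compact part of the boundary of $\mathrm{conv}(\sigma \cap N \setminus \{0\})$, where $v_0, v_{r+1}$ span $\sigma$, $v_{t-1} + v_{t+1} = b_t v_t$, and $C_t$ is the compact invariant curve attached to $v_t$. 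Since $Y$ is a smooth surface and $\scM_d$ is a full, hence locally free, sheaf of rank one, it is a $T$-equivariant line bundle $\scO_Y(A_d)$ for some torus-invariant Weil divisor $A_d = \sum_s a^{(d)}_s D_s$, and the quantities sought are the intersection numbers $A_d \cdot C_t$, which one reads off from the fan using $D_t \cdot C_t = -b_t$, $D_{t\pm 1} \cdot C_t = 1$, $D_s \cdot C_t = 0$ for $s \notin \{t-1, t, t+1\}$, together with the linear equivalences $\sum_s \langle m, v_s \rangle D_s \sim 0$ for $m$ in the character lattice $M$.

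The crux is to determine $A_d$ from $d$. Because $M_d = (\rho_d^\vee \otimes \bC[x,y])^G$ is the $R$-module spanned by the monomials $x^a y^b$ lying in a fixed residue class modulo $n$, the full sheaf $\scM_d = \pi^* M_d / (\mathrm{torsion})$ is realized inside the constant sheaf of rational functions as the subsheaf generated by these monomials; being locally free, its generic behaviour along each $D_s$ is that of $\scO_Y$ times the minimal-order generator, so the $a^{(d)}_s$ are recovered from the $D_s$-orders of the staircase of minimal monomial generators of $M_d$, equivalently from the support function of $A_d$ evaluated at the $v_s$. Feeding this into the intersection formulas above, the sequence $(a^{(d)}_s)_s$ is governed by exactly the Hirzebruch-Jung recursion attached to $n/q$; solving it rewrites $(A_d \cdot C_t)_t$ as the mixed-radix digits of $d$ relative to $(i_1, \dots, i_r)$, which are the $d_t$ of the first half, consistently with $\deg\scM_0|_{C_t} = 0$ for all $t$.

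The main obstacle I anticipate is precisely this last translation: turning the monomial/staircase data of $M_d$ into a closed description of $A_d$, equivalently a clean formula for $c_1(\scM_d)$, and then recognizing the resulting integer recursion as the continued-fraction recursion. A good independent check is the special locus, where the formula predicts that the expansion of $i_t$ has digit vector $(\delta_{t1}, \dots, \delta_{tr})$, hence that $\scM_{i_t}$ is the $t$-th special sheaf of Theorem~\ref{th:Wunram-1} with $c_1(\scM_{i_t}) \cdot C_j = \delta_{tj}$, and that the special representations are exactly $0, i_1 = q, i_2, \dots, i_r$. An alternative to the direct computation is an induction on $r$ via the tail continued fraction $[b_2, \dots, b_r] = i_1/i_2$, which governs the cyclic singularity $\tfrac1{i_1}(1, i_2)$ whose minimal resolution is formally a neighbourhood of $C_2 \cup \dots \cup C_r$ in $Y$: matching $\scM_d$ there with the full sheaf attached to the truncated digit string $d - d_1 i_1$ would yield $\deg\scM_d|_{C_t} = d_t$ for $t \ge 2$ from the inductive hypothesis and leave only $\deg\scM_d|_{C_1} = d_1 = \lfloor d/q \rfloor$ to be computed directly, the obstacle there being the compatibility of $\pi^*(-)/(\mathrm{torsion})$ with restriction to that neighbourhood.
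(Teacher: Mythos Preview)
The paper does not prove this theorem: it is quoted verbatim from Wunram's article \cite{wunram2} and used as a black box, with only the subsequent Remark recording the greedy algorithm for computing the $d_t$. So there is no ``paper's own proof'' to compare against.

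That said, your outline is a plausible reconstruction of how such a proof would go. The arithmetic half is complete and matches the Remark in the paper. For the geometric half, the toric setup is correct and the intersection theory on the fan is the right tool, but you have correctly identified the gap yourself: the passage from the monomial generators of $M_d$ to an explicit divisor $A_d$ (equivalently, computing the support function of $\scM_d$ at each $v_s$) is asserted rather than carried out. This is the entire content of the theorem, and your proposal does not close it; phrases like ``solving it rewrites $(A_d\cdot C_t)_t$ as the mixed-radix digits of $d$'' describe the desired outcome, not the computation. The inductive alternative via the tail singularity $\tfrac{1}{i_1}(1,i_2)$ is closer to Wunram's actual argument, but the compatibility of $\pi^*(-)/(\text{torsion})$ with restriction to a tubular neighbourhood of $C_2\cup\dots\cup C_r$ is again the substantive step, and you flag it as an obstacle rather than resolving it.

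In short: since the paper treats this as a cited result, there is nothing to compare; your sketch is in the right direction but is not a proof, and you are candid about where the work remains.
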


\begin{remark}
Non-negative integers $d_i$ in Theorem \ref{theorem:wunram}
can be computed
by setting $e_0 = d$ and
$$
 e_t = d_{t+1} i_{t+1} + e_{t+1}, \qquad 0 \le e_{t+1} < i_{t+1}
$$
for $t = 0, \ldots, r-1$.
\end{remark}

\begin{corollary}
Special representations are given by $i_0\equiv i_{r+1}, i_1, \dots, i_r$,
and the labeling of specials and irreducible components are related by
$$
\deg \scM_{i_s}|_{C_t} = \delta_{st}.
$$
\end{corollary}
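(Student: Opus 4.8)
The plan is to read off the special representations from the results of Wunram already quoted above. By the paragraph following Theorem~\ref{th:Wunram-1}, the indecomposable special full sheaves are exactly $\scO_Y=\scM_0$ together with the $r$ sheaves $\scM_{M_1},\dots,\scM_{M_r}$, where $M_s\in\bZ/n\bZ$ denotes the representation whose full sheaf is the one singled out by $c_1(\scM_{M_s})\cdot C_t=\delta_{st}$. Since $G$ is cyclic the minimal resolution $Y$ is toric and every $\scM_d$ is a line bundle, so $c_1(\scM_{M_s})\cdot C_t=\deg\scM_{M_s}|_{C_t}$; hence it suffices to compare this multidegree with the one produced by Theorem~\ref{theorem:wunram}, and the whole statement reduces to identifying the integer $M_s\in\{0,1,\dots,n-1\}$ with $i_s$.

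First I would note that the recursion defining the $i_t$ forces $n=i_0>i_1>\dots>i_r=1>i_{r+1}=0$, since at each step $0<i_{t+2}<i_{t+1}$ is imposed. Fix $1\le s\le r$ and take the expansion $i_s=\sum_{t=1}^r d_t i_t$ with $d_t=\delta_{st}$; I claim this is the expansion of Theorem~\ref{theorem:wunram} for $d=i_s$. Indeed, for $t_0<s$ we get $\sum_{t>t_0}d_t i_t=i_s<i_{t_0}$ by the strict monotonicity just noted, and for $s\le t_0\le r$ we get $\sum_{t>t_0}d_t i_t=0<i_{t_0}$ because $i_{t_0}\ge i_r=1$; thus $0\le\sum_{t>t_0}d_t i_t<i_{t_0}$ for every $t_0$, and by the uniqueness in Theorem~\ref{theorem:wunram} this is the required expansion. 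Consequently $\deg\scM_{i_s}|_{C_t}=d_t=\delta_{st}$ for all $t$, which already proves the degree assertion of the corollary (the case $s=0$ being $\deg\scM_0|_{C_t}=\deg\scO_Y|_{C_t}=0$).

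To finish, apply Theorem~\ref{theorem:wunram} to $d=M_s$, taken in $\{0,\dots,n-1\}$: by Theorem~\ref{th:Wunram-1} its multidegree is $(\delta_{st})_t$, so the coefficients in its unique expansion must be $d_t=\delta_{st}$, whence $M_s=\sum_t\delta_{st}i_t=i_s$. Therefore the non-trivial special representations are exactly $i_1,\dots,i_r$, which are pairwise distinct since the $i_t$ are, and adding the trivial representation $\scM_0=\scO_Y$, whose label is $0\equiv n=i_0$, equivalently $i_{r+1}$, we obtain precisely the list $i_0\equiv i_{r+1},i_1,\dots,i_r$. The only steps needing attention are the verification of the inequality constraint above, where the strict decrease of the sequence $(i_t)$ is the whole point, and the harmless bookkeeping that passes between a character in $\bZ/n\bZ$ and its representative in $[0,n)$ so that Theorem~\ref{theorem:wunram} applies; since no input beyond the cited theorems is needed, I expect no real obstacle.
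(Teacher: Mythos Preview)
Your proof is correct and complete. The paper states this corollary without proof, treating it as an immediate consequence of Theorem~\ref{theorem:wunram}; your argument supplies precisely the verification the paper leaves implicit, namely that the expansion $i_s=\sum_t\delta_{st}i_t$ satisfies the inequality constraints (via the strict monotonicity of the $i_t$), and that the uniqueness in Theorem~\ref{theorem:wunram} then forces the special label $M_s$ from Theorem~\ref{th:Wunram-1} to equal $i_s$.
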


\begin{lemma}[{Wunram \cite[Lemma 1]{wunram2}}]\label{lemma:wunramvanishing}
A sequence
$
 (d_1, \dots, d_r) \in (\bZ_{\ge 0})^r
$
is obtained from an integer $d \in [0, n-1]$ as in the previous theorem
if and only if the following hold:
\begin{itemize}
 \item $0 \le d_t \le b_t-1$ for any $t$.
 \item If $d_s= b_s-1$ and $d_t=b_t-1$ for $s<t$,
       then there is $l$ with $s<l<t$ and $d_l \le b_l-3$.
\end{itemize}
\end{lemma}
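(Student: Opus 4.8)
The plan is to recast the statement as an elementary inequality for the partial sums of the expansion and then to verify the two implications by an induction running from $t=r$ downward. For a candidate sequence $(d_1,\dots,d_r)\in(\bZ_{\ge0})^r$ set $S_t:=d_ti_t+d_{t+1}i_{t+1}+\dots+d_ri_r$ for $1\le t\le r$ and $S_{r+1}:=0$. By Theorem \ref{theorem:wunram} — more precisely, by the existence and uniqueness of the expansion together with its characterizing property $0\le\sum_{t>t_0}d_ti_t<i_{t_0}$ — the sequence is obtained from some $d\in[0,n-1]$ exactly when $S_t<i_{t-1}$ for all $1\le t\le r$, and in that case $(d_1,\dots,d_r)$ is the expansion of $d:=S_1\in[0,n-1]$. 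I would then pass to the slack $h_t:=i_{t-1}-S_t$ and the integers $c_t:=b_t-1-d_t$; using $i_{t-1}=b_ti_t-i_{t+1}$ and $S_t=d_ti_t+S_{t+1}$ one finds
\[
 h_t=h_{t+1}+c_ti_t-i_{t+1}\quad(1\le t\le r),\qquad h_{r+1}:=1 ,
\]
so everything reduces to showing that $h_t\ge1$ for all $1\le t\le r$ if and only if the two bullet conditions hold. I use throughout that $i_0>i_1>\dots>i_r=1>i_{r+1}=0$, so $i_l-i_{l+1}\ge1$ and $b_t\ge2$. Note that the first bullet is precisely $c_t\ge0$ for all $t$, and the second says that between any two indices $s<t$ with $c_s=c_t=0$ there is an index $l$ with $c_l\ge2$.

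For the direction ``arises $\Rightarrow$ bullets'': since $S_{t+1}\ge0$ one always has $h_{t+1}\le i_t$, so if $c_t\le-1$ for some $t$ then $h_t\le i_t+c_ti_t-i_{t+1}\le-i_{t+1}\le0$, contradicting $h_t\ge1$; this is the first bullet. For the second, suppose $c_s=c_t=0$ with $s<t$ but $c_l\le1$ for every $s<l<t$. Telescoping the recursion gives
\[
 h_{s+1}=h_t+\sum_{l=s+1}^{t-1}(c_li_l-i_{l+1})\le h_t+\sum_{l=s+1}^{t-1}(i_l-i_{l+1})=h_t+i_{s+1}-i_t ,
\]
while $c_t=0$ and $h_{t+1}\le i_t$ give $h_t\le i_t-i_{t+1}$; combining, $h_{s+1}\le i_{s+1}-i_{t+1}\le i_{s+1}$, hence $h_s=h_{s+1}-i_{s+1}\le0$ — a contradiction.

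For the converse ``bullets $\Rightarrow$ arises'', I would trace the sequence $h_{r+1}=1,h_r,h_{r-1},\dots$ and show it never drops below $1$, arguing by induction on the zeros of $c$ taken in decreasing order. At an index $t$ with $c_t\ge1$ the increment $c_ti_t-i_{t+1}$ is $\ge i_t-i_{t+1}\ge1$ (and $\ge i_t+1$ when $c_t\ge2$), so $h$ strictly increases; it can decrease only at an index $t$ with $c_t=0$, and then by exactly $i_{t+1}$. If $c$ has no zero we are done. Otherwise let $p$ be the largest zero: above it every $c_l\ge1$, so $h_{p+1}\ge1+\sum_{l=p+1}^{r}(i_l-i_{l+1})=i_{p+1}+1$ and the drop at $p$ leaves $h_p\ge1$. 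If $p'<p$ are consecutive zeros of $c$ — the second bullet forbids adjacent zeros, so $(p',p)\ne\emptyset$ — that bullet supplies $l^*\in(p',p)$ with $c_{l^*}\ge2$; telescoping from $h_p\ge1$ up to $h_{p'+1}$ and crediting the surplus $i_{l^*}$ from step $l^*$ gives $h_{p'+1}\ge h_p+(i_{p'+1}-i_p)+i_{l^*}\ge i_{p'+1}+1$, using $i_{l^*}\ge i_{p-1}\ge i_p+1$; so the drop at $p'$ again leaves $h_{p'}\ge1$. Since $h$ increases between consecutive zeros (as the index decreases) and also below the smallest zero, while at the zeros themselves $h\ge1$ by the induction, we conclude $h_t\ge1$ throughout.

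The step I expect to be the main obstacle is this last telescoping bound — not because of any new idea, but because one must keep the bookkeeping straight: one has to see exactly why the single digit with $c_{l^*}\ge2$ mandated by the second bullet stores enough slack (the surplus $i_{l^*}\ge i_{p-1}>i_p$) to absorb the loss $-i_p$ coming from the telescoped $i_l-i_{l+1}$ terms across a block between two zeros. Everything else is bounded arithmetic with the relations $i_{t-1}+i_{t+1}=b_ti_t$.
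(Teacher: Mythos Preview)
Your argument is correct. The paper does not give its own proof of this lemma; it is quoted from Wunram \cite[Lemma~1]{wunram2} and used later (notably in Lemma~\ref{lemma:no_special} and in Corollary~\ref{corollary:nearlyconvex}) as a black box. So there is nothing in the paper to compare your proof against, and what you have written is a self-contained elementary verification.

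A few minor remarks on presentation. Your reformulation via the slacks $h_t=i_{t-1}-S_t$ and the recursion $h_t=h_{t+1}+c_ti_t-i_{t+1}$ is exactly the right device; it makes both implications transparent. In the forward direction your two contradictions are clean. In the converse, the only place a reader might pause is the telescoping bound between consecutive zeros $p'<p$: you use $h_p\ge 1$, the baseline $\sum_{l=p'+1}^{p-1}(i_l-i_{l+1})=i_{p'+1}-i_p$, and the single surplus $i_{l^*}\ge i_{p-1}\ge i_p+1$ from the index $l^*$ with $c_{l^*}\ge 2$ guaranteed by the second bullet, to get $h_{p'+1}\ge i_{p'+1}+1$ and hence $h_{p'}\ge 1$. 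That is fine; you might want to state explicitly that for the intermediate indices $p'<l<p$ one has $h_l\ge h_p\ge 1$ because $c_l\ge 1$ there, and similarly for indices below the smallest zero and above the largest zero, so that the conclusion $h_t\ge 1$ really holds for every $t$ and not just at the zeros of $c$. You do say this, but it could be made a shade more explicit. The edge case $p=r$ (so the sum $\sum_{l=p+1}^{r}$ is empty and $h_{p+1}=1=i_{r+1}+1$) is covered by your formulas as written.
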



Introduce the dual sequence $j_0, \dots, j_{r+1}$ by
$j_0=0$, $j_1=1$, and 
$$
 j_t = j_{t-1} b_{t-1} - j_{t-2}, \qquad t \ge 2.
$$
Then one has $j_{r+1}=n$.

\begin{lemma}[{Wunram \cite[Lemma 2]{wunram2}}]\label{lm:wunram_dual}
Let $d = d_1 i_1 + \dots + d_r i_r$ be as in Theorem \ref{theorem:wunram}
and put $f=d_1 j_1 + \dots + d_r j_r$. Then one has $qf \equiv d \mod n$.
\end{lemma}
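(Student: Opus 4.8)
The plan is to reduce the congruence $qf \equiv d \pmod{n}$ to the family of congruences
$$
q j_t \equiv i_t \pmod{n}, \qquad 1 \le t \le r,
$$
and then to conclude by linearity: once these are known one has
$$
q f = \sum_{t=1}^{r} d_t\,(q j_t) \equiv \sum_{t=1}^{r} d_t\, i_t = d \pmod{n},
$$
so that nothing about the particular coefficients $d_t$ (i.e. the constraints coming from Theorem \ref{theorem:wunram}) is used beyond additivity.

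To prove $q j_t \equiv i_t \pmod{n}$ I would first observe that the sequences $i_\bullet$ and $j_\bullet$ satisfy one and the same three-term linear recursion. Shifting the index in the defining relation $j_t = b_{t-1} j_{t-1} - j_{t-2}$ of the dual sequence rewrites it as $j_s = b_{s+1} j_{s+1} - j_{s+2}$, valid for $0 \le s \le r-1$, which is exactly the recursion $i_s = b_{s+1} i_{s+1} - i_{s+2}$ obeyed by $i_\bullet$ on the same range of indices. Since the recursion is linear and $q$ is a constant, the sequence $q j_\bullet$, and hence also the difference $\delta_s := q j_s - i_s$, satisfies it as well: $\delta_s = b_{s+1} \delta_{s+1} - \delta_{s+2}$ for $0 \le s \le r-1$.

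Next I would pin down the two boundary terms modulo $n$. From $j_{r+1} = n$ and $i_{r+1} = 0$ we get $\delta_{r+1} = q n \equiv 0 \pmod{n}$, while from $j_r = q'$ with $q q' \equiv 1 \pmod{n}$ and $i_r = 1$ we get $\delta_r = q q' - 1 \equiv 0 \pmod{n}$. A downward induction on $s$ from $s = r-1$ down to $s = 0$, applying $\delta_s = b_{s+1} \delta_{s+1} - \delta_{s+2}$ at each step, then forces $\delta_s \equiv 0 \pmod{n}$ for all $0 \le s \le r+1$. In particular $q j_t \equiv i_t \pmod{n}$ for $1 \le t \le r$, which completes the argument.

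I do not expect a genuine obstacle here: the entire content is the observation that $i_\bullet$ and $j_\bullet$ are two solutions of the same recursion, after which the statement is a two-line induction. The only point that needs care is the index bookkeeping — checking that the ranges on which the two recursions are valid actually coincide, and that the base values $\delta_r$, $\delta_{r+1}$ are positioned so that the downward induction reaches all of $1 \le t \le r$.
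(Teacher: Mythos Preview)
Your argument is correct. The reduction to $qj_t \equiv i_t \pmod{n}$ is exactly the right move, and your observation that both $i_\bullet$ and $j_\bullet$ (after the index shift $s = t-2$ in the defining relation for $j$) satisfy the same recursion $x_s = b_{s+1}x_{s+1} - x_{s+2}$ on $0 \le s \le r-1$ is accurate. The boundary values $\delta_r = qq'-1 \equiv 0$ and $\delta_{r+1} = qn \equiv 0$ are correctly identified, and the downward induction goes through.

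As for comparison with the paper: the paper does not prove this lemma at all --- it is quoted directly from Wunram \cite[Lemma 2]{wunram2}. In fact, immediately after stating the lemma the paper records the consequence $i_t \equiv qj_t \pmod{n}$ (equation \eqref{equation:special}) as a corollary, which is precisely the family of congruences you established along the way. So your proof supplies what the paper takes as a black box, and the key intermediate statement you isolate is the one the paper actually uses downstream.
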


In particular, special representations are given by
\begin{equation}\label{equation:special}
 i_0 \equiv qj_0, \quad
 i_1 \equiv qj_1, \quad
 \dots, \quad
 i_r \equiv qj_r.
\end{equation}
Note that $( i_t )_{t=0}^r$ is decreasing and
$( j_t )_{t=0}^r$ is increasing.

\section{Dimer models and quivers}
 \label{sc:definitions}

\subsection{Dimer models}

By a  {\em graph},
we mean an abstract,
unoriented graph,
possibly with multiple edges and loops.
To be more precise,
a graph is a triple
$(N, E, \partial)$
consisting of
\begin{itemize}
 \item
a set $N$ of nodes,
 \item
a set $E$ of edges, and
 \item
the incidence relation $\partial : E \to N^{(2)}$,
which is a map from $E$
to the symmetric product $N^{(2)} = N^2 / \frakS_2$.
\end{itemize}
%
A graph 
is {\em bipartite}
if one can divide the set
$N$ of nodes into the disjoint union
of
\begin{itemize}
\item
a set $B \subset N$ of black nodes, and
\item
a set $W \subset N$ of white nodes, so that
\item
no edge connects nodes with the same color.
\end{itemize}
A {\em bicolored graph} is a bipartite graph
with a fixed choice of a coloring.

With a graph $(N, E, \partial)$,
one can associate a one-dimensional CW complex
whose 0-cells and 1-cells correspond to nodes and edges
respectively.
An {\em embedding} of a graph into a topological space
$T$ is a continuous injection from this CW complex
to $T$.
When a graph is embedded in a topological space,
we often identify
nodes and edges with their images
under the embedding.

Let $T$ be a real two-torus.
We fix an identification $T = \bR^2 / \bZ^2$,
which gives identifications
$H_1(T, \bZ) \cong \bZ^2$
and
$H^1(T, \bZ) \cong \bZ^2$.
We equip $T$ with the orientation
coming from the standard orientation on $\bR^2$.

A {\em dimer model} is a finite bicolored graph $G = (B, W, E)$
embedded in $T$
such that
\begin{itemize}
 \item
$G$ has no univalent node, and
 \item
any connected component
of the complement
$T \setminus \bigcup_{e \in E} e$
of the graph is simply-connected.
\end{itemize}


\subsection{Perfect matchings and characteristic polygons}

A {\em perfect matching}
(or a {\em dimer configuration})
on a graph $(N, E, \partial)$
is a subset $D$ of $E$
such that for any node $n \in N$,
there is a unique edge $e \in D$
incident to $n$.
A dimer model is said to be {\em non-degenerate}
if for any edge $e \in E$,
there is a perfect matching $D$
such that $e \in D$.

Let $G = (B, W, E)$ be a dimer model, and
consider the bicolored graph $\Gtilde$ on $\bR^2$
obtained from $G$ by pulling-back
to the universal cover $\bR^2 \to T$.
The set of perfect matchings on $G$ is naturally identified
with the set of periodic perfect matchings
on the infinite graph $\Gtilde$ on the universal cover.
Fix a perfect matching $D_0$
called the {\em reference matching}.
For any
perfect matching $D$,
the union $D \cup D_0$
divides $\bR^2$ into connected components.
The {\em height function} $h_{D, D_0}$ is
a locally-constant function on
$\bR^2 \setminus (D \cup D_0)$
which increases (resp. decreases)
by $1$
when one crosses an edge $e \in D$
with the black (resp. white) node
on his right
or an edge $e \in D_0$
with the white (resp. black) node
on his right.
This rule determines the height function
up to an addition of a constant.
The height function may not be periodic
even if $D$ and $D_0$ are periodic,
and the {\em height change}
$h(D, D_0) = (h_x(D, D_0), h_y(D, D_0)) \in \bZ^2$
of $D$ with respect to $D_0$
is defined as the difference
\begin{align*}
 h_x(D, D_0) &= h_{D, D_0}(p+(1,0)) - h_{D, D_0}(p), \\
 h_y(D, D_0) &= h_{D, D_0}(p+(0,1)) - h_{D, D_0}(p) 
\end{align*}
of the height function,
which does not depend on the choice of
$p \in \bR^2 \setminus (D \cup D_0)$.
More invariantly,
height changes can be considered
as an element of $H^1(T, \bZ)$.
The dependence of the height change
on the choice of the reference matching
is given by
$$
 h(D, D_1) = h(D, D_0) - h(D_1, D_0)
$$
for any three perfect matchings $D$, $D_0$ and $D_1$.
We often suppress the dependence of the height difference
on the reference matching
and just write $h(D) = h(D, D_0)$.

For a fixed reference matching $D_0$,
the characteristic polynomial of $G$ is defined by
$$
 Z(x, y)
  = \sum_{D \in \perfmat(G)}
     x^{h_x(D)} y^{h_y(D)},
$$
where $\perfmat(G)$ is the set of perfect matchings on $G$.
The characteristic polynomial is
a Laurent polynomial in two variables,
whose Newton polygon gives the {\em characteristic polygon},
defined as the convex hull
$$
 \Delta = \Conv
 \{ (h_x(D), h_y(D)) \in \bZ^2
      \mid \text{$D$ is a perfect matching on $G$} \}
$$
of the set of height changes of perfect matchings
on the dimer model.

A {\em corner} of $\Delta$ is an extremal point of $\Delta$,
and a {\em side} of $\Delta$ is the interval
between two neighboring corners.
A side is divided into 
{\em primitive side segments},
defined as intervals between two adjacent lattice points
on the boundary of $\Delta$.
A perfect matching $D$ is said to be
a {\em corner perfect matching}
if its height change $h(D)$ is
on the corner of the characteristic polygon.
The {\em multiplicity} of a perfect matching $D$
is the number of perfect matchings
whose height changes are the same as $D$.

\subsection{Zigzag paths and their slopes}
 \label{sc:slope}

A {\em zigzag path} is a path on a bicolored graph
in an oriented surface
which makes a maximum turn to the right on a white node
and a maximal turn to the left on a black node.
We assume that a zigzag path does not have an endpoint,
so that it is either periodic or infinite in both directions.
Here, the latter can happen
only if the graph is infinite.
Figure \ref{fg:zigzag} shows an example
of a part of a dimer model
and a zigzag path on it.

\begin{figure}[htbp]
\centering
\begin{minipage}{.4 \linewidth}
\centering
\input{zigzag.pst}
\caption{A zigzag path}
\label{fg:zigzag} 
\end{minipage}
\begin{minipage}{.4 \linewidth}
\centering
\input{zigzag2.pst}
\caption{A path on the quiver along a zigzag path}
\label{fg:zigzag2} 
\end{minipage}
\end{figure}

Let $z$ be a zigzag path on a dimer model, and
assume that there is a perfect matching $D_0$
which intersect half of the edges
constituting $z$
(i.e., every other edge of $z$ belongs to $D_0$).
Then 
the height change of any other perfect matching $D$
with respect to $D_0$
in the direction of $z$ is negative;
\begin{equation} \label{eq:zigzag}
 \langle h(D, D_0), [z] \rangle \le 0.
\end{equation}
Here, $[z] \in H_1(T, \bZ) \cong \bZ^2$
is the homology class of $[z]$,
which is paired with the height change
considered as an element of $H^1(T, \bZ)$.
To show this, replace $z$ by the path $p$ on the quiver
going along $z$ (on the left side of $z$),
which belongs to the class $[z]$
as shown in \pref{fg:zigzag2}.
Then \eqref{eq:zigzag} follows from the fact that
as one goes around $T$ along $p$,
one crosses no edge in $D_0$
and every edge one crosses has a white node
on one's right.
In this way,
such a zigzag path gives an inequality
which bound the Newton polygon
of the characteristic polynomial.

The homology class $[z] = (u, v) \in H_1(T, \bZ) \cong \bZ^2$
of a zigzag path $z$
considered as an element of $\bZ^2$
will be called its {\em slope}.
If a zigzag path does not have a self-intersection,
then $(u, v) \in\bZ^2$ is a primitive element,
and we sometimes think of the slope
as an element
$$
 \frac{(u, v)}{\sqrt{u^2+v^2}} \in S^1
$$
of the unit circle.
The set of slopes has the natural
counter-clockwise cyclic order
as a subset of the unit circle.

\subsection{Quivers}

A {\em quiver} is an oriented graph,
which is a quadruple $(V, A, s, t)$ consisting of
\begin{itemize}
 \item a set $V$ of vertices,
 \item a set $A$ of arrows, and
 \item two maps $s, t: A \to V$ from $A$ to $V$.
\end{itemize}
For an arrow $a \in A$,
the vertices $s(a)$ and $t(a)$
are called the {\em source}
and the {\em target} of $a$
respectively.

A {\em path} on a quiver
is an ordered set of arrows
$(a_n, a_{n-1}, \dots, a_{1})$
such that $s(a_{i+1}) = t(a_i)$
for $i=1, \dots, n-1$.
We also allow for a path of length zero,
starting and ending at the same vertex.

The {\em path algebra} $\bC Q$
of a quiver $Q = (V, A, s, t)$
is the algebra
spanned by the set of paths
as a vector space,
and the multiplication is defined
by the concatenation of paths;
$$
 (b_m, \dots, b_1) \cdot (a_n, \dots, a_1)
  = \begin{cases}
     (b_m, \dots, b_1, a_n, \dots, a_1) & s(b_1) = t(a_n), \\
      0 & \text{otherwise}.
    \end{cases}
$$

A {\em quiver with relations}
is a pair of a quiver
and a two-sided ideal $\scI$
of its path algebra.
For a quiver $\Gamma = (Q, \scI)$
with relations,
its path algebra $\bC \Gamma$ is defined as
the quotient algebra $\bC Q / \scI$.

\subsection{A quiver with relations
associated with a dimer model}
 \label{sc:dimer_quiver}

A dimer model $(B, W, E)$ encodes
the information of a quiver
$\Gamma = (V, A, s, t, \scI)$
with relations
in the following way:
The set $V$ of vertices
is the set of connected components
of the complement
$
 T \setminus (\bigcup_{e \in E} e),
$
and
the set $A$ of arrows
is the set $E$ of edges of the graph.
The orientations of the arrows are determined
by the colors of the nodes of the graph,
so that the white node $w \in W$ is on the right
of the arrow.
In other words,
the quiver is the dual graph of the dimer model
equipped with an orientation given by
rotating the white-to-black flow on the edges of the dimer model
by minus 90 degrees.

The relations of the quiver are described as follows:
For an arrow $a \in A$,
there exist two paths $p_+(a)$
and $p_-(a)$
from $t(a)$ to $s(a)$,
the former going around the white node
incident to $a \in E = A$ clockwise,
and the latter going around the black node
incident to $a$ counterclockwise
as shown in \pref{fg:relation}.
Then the ideal $\scI$
of the path algebra is
generated by $p_+(a) - p_-(a)$
for all $a \in A$.

\begin{figure}[ht]
\centering
\begin{minipage}{.4 \linewidth}
\centering
\input{relation.pst}
\caption{Relations on the quiver}
\label{fg:relation}
\end{minipage}
\begin{minipage}{.4 \linewidth}
\centering
\input{small_cycle.pst}
\caption{Small cycles}
\label{fg:small_cycle}
\end{minipage}
\end{figure}

\subsection{Small cycles, minimal paths and
weak equivalence}
\label{sc:small_minimal_weak}

A {\em small cycle} on a quiver
associated with a dimer model
is a path obtained as
the product of arrows surrounding a node of the dimer model.
Three small cycles are shown in \pref{fg:small_cycle}.
A path $p$ is said to be {\em minimal}
if it is not equivalent to a path containing a small cycle.

Note that small cycles starting from a fixed vertex
are equivalent to each other.
It follows that the sum $\omega := \sum_{v \in V} \omega_v$ of small cycles over the set of vertices,
where one picks one small cycle $\omega_v$ for each vertex $v$,
is a well-defined element of the path algebra
independent of the choice of $\omega_v$.
One can easily see that the element $\omega$ belongs to the center of the path algebra,
and there is the universal map
$$
 \bC \Gamma \to \bC \Gamma[\omega^{-1}]
$$
into the localization of the path algebra
by the multiplicative subset generated by $\omega$.
Two paths are called {\em weakly equivalent}
if they give the same element
in $\bC \Gamma[\omega^{-1}]$.

Suppose that there is a perfect matching $D$.
Note that every small cycle contains exactly one arrow in $D$.
Then \cite[Lemma 2.1]{Ishii-Ueda_CCDM} implies that
two paths with the same source and the target
are weakly equivalent if and only if they
have the same homology class and they
contain the same number of arrows in $D$.

\subsection{Moduli space of quiver representations}

A {\em representation} of a quiver
$
\Gamma = (V, A, s, t, \scI)
$
with relations is a module
over the path algebra $\bC  \Gamma$.
In other words,
a representation of $\Gamma$ is a collection
$((V_v)_{v \in V}, (\psi_a)_{a \in A})$
of vector spaces $V_v$ for $v \in V$
and linear maps $\psi_a : V_{s(a)} \to V_{t(a)}$
for $a \in A$ satisfying relations in $\scI$.
The {\em dimension vector} of a representation
$((V_v)_{v \in V}, (\psi_a)_{a \in A})$
is given by $(\dim V_v)_{v \in V} \in \bZ^V$.
This allows us to think of $\bZ^V$
as a quotient of the Grothendieck group
of the abelian category
of finite dimensional representations of $\Gamma$.
The {\em support} of a representation
is the set of vertices $v \in V$
such that $\dim V_v \ne 0$.

A {\em stability parameter} $\theta$ is
an element of $\Hom (\bZ^V, \bZ)$.
A $\bC \Gamma$-module $M$
is said to be {\em $\theta$-stable}
if $\theta(M)=0$ and for any non-trivial submodule
$N \subsetneq M$,
one has $\theta(N) > \theta(M)$.
$M$ is {\em $\theta$-semistable}
if $\theta(N) \ge \theta(M)$ holds
instead of $\theta(N) > \theta(M)$.
A stability parameter $\theta$ is said to be {\it generic}
with respect to a fixed dimension vector
if semistability implies stability.
This stability condition is introduced by King \cite{King}
to construct the moduli space $\scM_{\theta}$
representing (the sheafification of) the functor
$$
\begin{array}{ccc}
 (\scS ch) & \to & (\scS et) \\
  \vin & & \vin \\
  T & \mapsto & (\text{a flat family over $T$
  of $\theta$-stable representations of $\Gamma$} )/ \sim
\end{array}
$$
for a fixed dimension vector.
Here, a {\em flat family of representations} of $\Gamma$
over $T$
is a collection $(\scL_v)_{v \in V}$
of vector bundles on $T$
for each vertex $v$ of $\Gamma$ and
a collection $(\phi_a)_{a \in A}$ of morphisms
$\phi_a : \scL_{s(a)} \to \scL_{t(a)}$
for each arrow $a$ of $\Gamma$
satisfying the relations $\scI$ of $\Gamma$.
Two families are defined to be equivalent
if they are isomorphic up to
tensor product $\scL_v \mapsto \scL_v \otimes \scL$
by some line bundle $\scL$
simultaneously for all vertices $v \in V$.
If the dimension vector is a primitive vector,
then we do not have to sheafify the functor,
and there is a universal family over the moduli space.
The bundles $\scL_v$ in the universal family are called
the {\it tautological bundles}.
In the rest of this paper, $\scM_\theta$ denotes
the moduli space of $\theta$-stable $\bC\Gamma$-modules
for the dimension vector $(1,1, \dots,1)$.
On the other hand,
the moduli space $\scMbar_\theta$
of $\theta$-semistable modules
does not represent the moduli functor,
but parametrizes S-equivalence classes of
$\theta$-semistable modules.

\subsection{Perfect matchings and moduli spaces}
 \label{sc:pm_moduli}

The main theorem of \cite{Ishii-Ueda_08} states that
when a dimer model is non-degenerate,
then the moduli space $\scM_{\theta}$ is a smooth Calabi-Yau
toric 3-fold for generic $\theta$.
A description of the universal representation
around each torus fixed points in terms of local coordinates
is given in \cite[Lemma 4.5]{Ishii-Ueda_08},
which immediately implies the following:
\begin{lemma}\label{lm:reduced}
Let $G$ be a non-degenerate dimer model.
Then for each arrow $a$ of the associated quiver,
the zero locus of $\phi_a : \scL_{s(a)} \to \scL_{t(a)}$
is a reduced subscheme of $\scM_{\theta}$.
Moreover, for each vertex $v$, the zero locus of the
map $\scL_v \to \scL_v$ corresponding to the small cycle
is the union of all the toric divisors with multiplicities one.
\end{lemma}
It is also proved in \cite[Section 6]{Ishii-Ueda_08} that
a toric divisor in $\scM_{\theta}$ gives a perfect matching
in such a way that the stabilizer group of the divisor
is given by the height change of the perfect matching.

A perfect matching can be considered as a set of walls
which block some of the arrows;
for a perfect matching $D$,
let $Q_D$ be the subquiver of $Q$
whose set of vertices is the same as $Q$
and whose set of arrows consists of $A \setminus D$
(recall that $A = E$).
The path algebra $\bC Q_D$ of $Q_D$ is a subalgebra of $\bC Q$,
and the ideal $\scI$ of $\bC Q$
defines an ideal $\scI_D = \scI \cap \bC Q_D$ of $\bC Q_D$.
A path $p \in \bC Q$ is said to be an {\em allowed path}
with respect to $D$ if $p \in \bC Q_D$.

With a perfect matching,
one can associate a representation of the quiver
with dimension vector $(1, \dots, 1)$
by sending any allowed path to $1$ and
other paths to $0$.
A perfect matching is said to be {\em simple}
if this representation is simple,
i.e., has no non-trivial subrepresentation.
This is equivalent to the condition
that there is an allowed path
starting and ending at any given pair of vertices.

\subsection{Quivers as categories}\label{sc:quiver-category}

With a quiver $\Gamma$ with relations,
one can associate a $\bC$-linear category $\scC$
in the following way:
\begin{itemize}
\item
The set of objects of $\scC$ is the set of vertices of $\Gamma$.
\item
The space of morphisms between two objects $v$ and $w$
is the vector space
$e_w \cdot \bC \Gamma \cdot e_v$
where $e_v$ and $e_w$ are idempotents of the path algebra
corresponding to the vertices $v$ and $w$ of $\Gamma$.
\item
The composition of morphisms comes from
the product in the path algebra.
\end{itemize}
In terms of the category $\scC$,
a representation of $\Gamma$ is just a linear functor
from $\scC$ to the category of vector spaces.

The advantage of working with categories
rather than path algebras is the following:
Let $v$ and $w$ be two vertices in a quiver
$
 \Gamma = (V, A, s, t, \scI)
$
with relations and
$\{ a_1, \ldots, a_r \}$ be any subset
of the set of arrows of $\Gamma$
from $v$ to $w$.
Then we can define another quiver
$
 \Gamma' = (V', A', s', t', \scI')
$
by setting
$
 V' = V \setminus \{ v \},
$
$
 A' = A \setminus \{ a_1, \ldots, a_r \},
$
and
\begin{align*}
 s'(a) &=
 \begin{cases}
  s(a) & s(a) \ne v, \\
  w & s(a) = v,
 \end{cases} &
 t'(a) &=
 \begin{cases}
  t(a) & t(a) \ne v, \\
  w & t(a) = v.
 \end{cases}
\end{align*}
The relations of $\Gamma'$ is determined
by the condition that $\bC \Gamma'$ is Morita equivalent
to the localization of $\bC \Gamma$
at the arrows $a_1, \ldots, a_r$.
This means that $\Gamma'$ is obtained from $\Gamma$
by inverting the arrows $a_1, \ldots, a_r$ and
identifying two vertices $v$ and $w$
which become isomorphic
after the inversion of the arrows.
There is a natural map $\pi : \bC \Gamma \to \bC \Gamma'$
between path algebras,
which is {\em not} an algebra homomorphism
since
$$
\pi(e_w) \circ \pi(e_v)
 = e_w \circ e_w
 = e_w
 \ne 0
 = \pi(0)
 = \pi(e_w \circ e_v).
$$
Nevertheless,
the map $\pi$ induces
a functor $\varpi : \scC \to \scC'$ from the category $\scC$
associated with $\Gamma$
to the category $\scC'$
associated with $\Gamma'$.
Since a representation of $\Gamma$
is a functor from $\scC$
to the category of vector spaces,
the functor $\varpi$ induces a functor
$
 \varpi^* : \module \bC \Gamma' \to \module \bC \Gamma
$
between categories of representations.
The image of the functor $\varpi^*$ consists of
representations $((V_v)_{v \in V}, (\psi_a)_{a \in A})$
such that $V_v = V_w$ and
$\psi_{a_1} = \cdots = \psi_{a_r} = \id_{V_v}$.

\subsection{Example}

\begin{figure}
\centering
\begin{minipage}{.4 \linewidth}
\centering
\input{dP1_graph.pst}
\caption{A dimer model}
\label{fg:dP1_graph}
\end{minipage}
\begin{minipage}{.45 \linewidth}
\centering
\input{dP1_quiver.pst}
\caption{The corresponding quiver}
\label{fg:dP1_quiver}
\end{minipage}
\end{figure}

\begin{figure}
\begin{minipage}{\linewidth}
\centering
\input{dP1_matchings.pst}
\caption{Eight perfect matchings}
\label{fg:dP1_matchings}
\end{minipage}
\end{figure}

\begin{figure}
\begin{minipage}{\linewidth}
\centering
\input{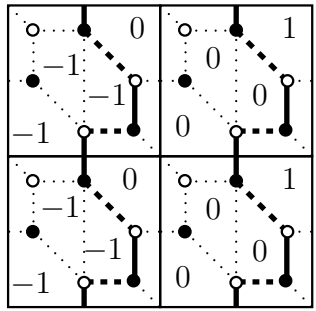}
\caption{The height function $h_{D_1,D_5}$}
\label{fg:dP1_height}
\end{minipage}
\end{figure}

\begin{figure}
\begin{minipage}{\linewidth}
\centering
\input{dP1_zigzags.pst}
\caption{Four zigzag paths}
\label{fg:dP1_zigzags}
\end{minipage}
\end{figure}

\begin{figure}
\begin{minipage}{\linewidth}
\centering
\input{dP1_diagram.pst}
\caption{The characteristic polygon}
\label{fg:dP1_diagram}
\end{minipage}
\end{figure}

As an example, consider the dimer model
in \pref{fg:dP1_graph}.
The corresponding quiver is shown in \pref{fg:dP1_quiver}.
This dimer model is non-degenerate,
and has eight perfect matchings $D_1, \dots, D_8$
shown in \pref{fg:dP1_matchings}.
The height function $h_{D_1, D_5}$
of $D_1$ with respect to $D_5$ is shown in \pref{fg:dP1_height}.
The characteristic polynomial is given by
$$
 Z(x, y) = 4 + x + y + \frac{1}{x} + \frac{1}{x y}.
$$
This dimer model has four zigzag paths
as shown in \pref{fg:dP1_zigzags}.
Note that the homology class of these four paths
are normal to the sides of the characteristic polygon
as shown in \pref{fg:dP1_diagram}.

\subsection{McKay quiver and hexagonal dimer models}
 \label{sc:McKay}

Let $\bTtilde \subset \GL(3, \bC)$ be the subgroup
consisting of diagonal matrices and put
$
 \bTtilde_0 = \bTtilde \cap \SL(3, \bC).
$
For a finite subgroup $A \subset \bTtilde_0$,
the character group
$
 A^* = \Hom(A, \bCx)
$
is a quotient of $\bTtilde_0^* \cong \bZ^2$,
and hence a quotient of $\bTtilde^* \cong \bZ^3$.
Let $\rho_x, \rho_y, \rho_z \in A^*$ be the images
of the coordinate functions
$x, y, z \in \bTtilde^*$ respectively.
The {\it McKay quiver} for $A$ has
$A^*$ as the set of vertices, and
there are three arrows
starting from each vertex $\rho$,
whose targets are
$\rho \rho_x$, $\rho \rho_y$ and $\rho \rho_z$ respectively.
We say that these arrows correspond to
``multiplications by $x$, $y$, $z$" respectively.
If $M_0$ denotes the kernel of the surjection $\bTtilde_0^* \to A^*$,
then the McKay quiver can be embedded
in the torus $T=(\bTtilde_0^* \otimes \bR)/M_0$,
and comes from a hexagonal dimer model on $T$
as in \cite{Reid_MC}
(see also \cite[Section 5]{Ueda-Yamazaki_NBTMQ}
and an example in Section \ref{ss:algorithm_examples} below).
The corresponding path algebra with relations is isomorphic
to the crossed product algebra $\bC[x, y, z] \rtimes A$.
The Hilbert scheme $\ahilb(\bC^3)$ of $A$-orbits, parameterizing {\it $A$-clusters}, is isomorphic to the moduli space $\scM_\theta$ for this quiver
with resect to a stability parameter $\theta$ such that $\theta(\rho) > 0$ for every non-trivial $\rho \in A^*$
(cf. e.g. \cite[Section 3]{Ito-Nakajima}).

\section{Consistency conditions on dimer models}
 \label{sc:consistency}

\subsection{Divalent node}

Let $G = (B, W, E)$ be a non-degenerate dimer model.
For a divalent node $n \in B \sqcup W$,
one can contract two nodes adjacent to $n$
and obtain another dimer model $G' = (B', W', E')$
as shown in Figure \ref{fg:remove_divalent_node}.
Note that the two nodes adjacent to $n$ must be distinct
since the dimer model is non-degenerate.
The numbers of black nodes and white nodes
are reduced by one,
and the number of edges is reduced by two
under this operation.
If $G'$ still has a divalent node,
then one can continue this process
until the dimer model contains no divalent nodes.
It is clear from the definition of the zigzag paths
that there is a natural bijection
between the sets of zigzag paths on dimer models
before and after the removal of divalent nodes.
It is also clear from the definition
of the relations of the quiver
associated with a dimer model that
the isomorphism class of the path algebra
does not change
under the operation of removing divalent nodes.

Although divalent nodes do not cause any problem
for the purpose of this paper,
it is often convenient to assume
that all the divalent nodes are removed
to simplify the exposition.

\begin{figure}[htbp]
\centering
\input{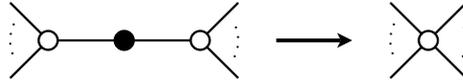}
\caption{Removal of a divalent node}
\label{fg:remove_divalent_node}
\end{figure}

\subsection{Consistent dimer models}

The following definition
is taken from \cite[Definition 3.5]{Ishii-Ueda_CCDM}.
It originates from the work of Hanany and Vegh
\cite{Hanany-Vegh},
and also studied by Bocklandt
\cite{Bocklandt_CCDM}.

\begin{definition} \label{df:consistency}
A dimer model is {\em consistent} if
\begin{itemize}
 \item
there is no homologically trivial zigzag path,
 \item
no zigzag path has a self-intersection
on the universal cover, and
 \item
no pair of zigzag paths on the universal cover intersect each other
in the same direction more than once.
\end{itemize}
\end{definition}

Here, two zigzag paths on a dimer model
are said to {\em intersect}
if they share an edge
(not a node)
after removing all the divalent node
from the dimer model.
One intersection consists of an odd number
of consecutive edges
connected by divalent nodes,
which must be just one edge
if the dimer model has no divalent node.
See Figure \ref{fg:zigzag-intersection}
for examples of an intersection
and a non-intersection.

\begin{figure}[htbp]
\centering
\input{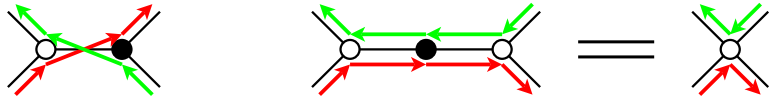}
\caption{Examples of an intersection (left) and
a non-intersection (right)}
\label{fg:zigzag-intersection}
\end{figure}

The third condition means that
if a pair $(z, w)$ of zigzag paths on the universal cover
has two intersections $a$ and $b$
and the zigzag path $z$ points from $a$ to $b$,
then the other zigzag path $w$ must point from $b$ to $a$. 

Figure \ref{fg:inconsistent-2-zigzag} shows an example
of a part of an inconsistent dimer model
which contains a homologically trivial zigzag path.

\begin{figure}[htbp]
\centering
\input{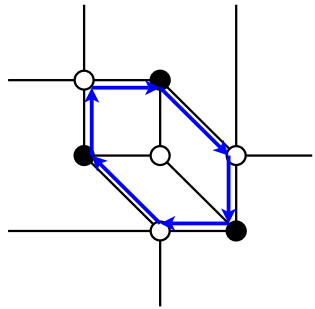}
\caption{A homologically trivial zigzag path}
\label{fg:inconsistent-2-zigzag}
\end{figure}
Figure \ref{fg:inconsistent} shows an inconsistent dimer model,
which contains a pair of zigzag paths
on the universal cover
intersecting in the same direction twice
as in Figure \ref{fg:inconsistent_zigzag}.
\begin{figure}[htbp]
\centering
\input{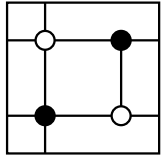}
\caption{An inconsistent dimer model}
\label{fg:inconsistent}
\end{figure}

\begin{figure}[htbp]
\centering
\input{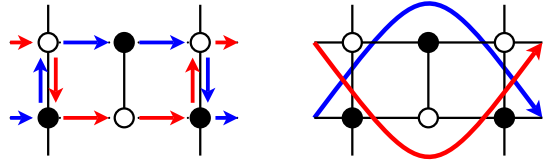}
\caption{A pair of zigzag paths in the same direction
intersecting twice}
\label{fg:inconsistent_zigzag}
\end{figure}

On the other hand,
a pair of zigzag paths going in the opposite direction
may intersect twice in a consistent dimer model.
Figure \ref{fg:P1P1_II_zigzag} shows a pair of such zigzag paths
on a consistent dimer model
in Figure \ref{fg:P1P1_II}.

\begin{figure}[htbp]
\centering
\input{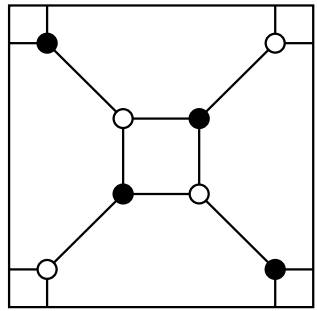}
\caption{A consistent non-isoradial dimer model}
\label{fg:P1P1_II}
\end{figure}

\begin{figure}[htbp]
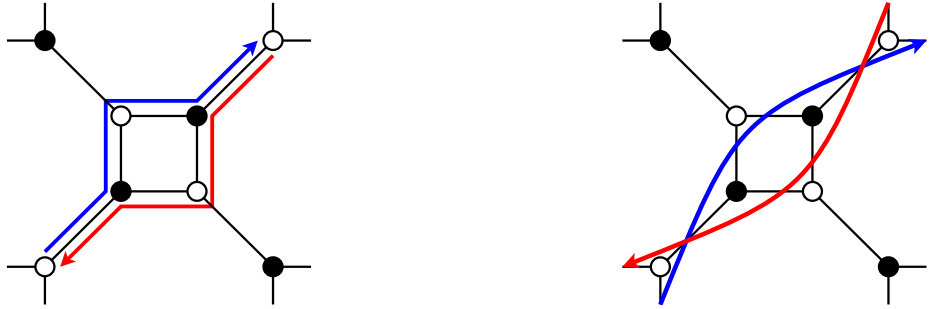

\begin{minipage}{.5 \linewidth}
\centering
\input{consistent_1.pst}
\end{minipage}
\begin{minipage}{.5 \linewidth}
\centering
\input{consistent_2.pst}
\end{minipage}
\caption{A pair of zigzag paths in the opposite direction
intersecting twice}
\label{fg:P1P1_II_zigzag}
\end{figure}

\subsection{Isoradial dimer models}

The following notion is due to
Duffin \cite{Duffin} and Mercat \cite{Mercat_DRSIM}:

\begin{definition} \label{df:isoradial}
A dimer model is {\em isoradial}
if one can choose an embedding of the graph into the torus
so that every face of the graph is a polygon
inscribed in a circle of a fixed radius
with respect to a flat metric on the torus.
Here, the circumcenter of any face must be contained
in the face.
\end{definition}

A dimer model is isoradial
if and only if zigzag paths behave like straight lines:

\begin{theorem}[%
{Kenyon and Schlenker \cite[Theorem 5.1]{Kenyon-Schlenker}}]
 \label{th:Kenyon-Schlenker}
A dimer model is isoradial
if and only if the following conditions are satisfied:
\begin{enumerate}
 \item
Every zigzag path is a simple closed curve.
 \item
Any pair of zigzag paths on the universal cover
share at most one edge.
\end{enumerate}
\end{theorem}

By comparing Theorem \ref{th:Kenyon-Schlenker}
with Definition \ref{df:consistency},
one obtains the following:

\begin{corollary} \label{cr:isoradial_consistent}
Isoradial dimer models are consistent.
\end{corollary}

The converse to Corollary \ref{cr:isoradial_consistent}
does not hold:
The dimer model shown in Figure \ref{fg:P1P1_II}
gives an example of a consistent dimer model
which is not isoradial.
A trivial example of a consistent dimer model
which is not isoradial
can be obtained by adding a divalent node
to any isoradial dimer model.

\subsection{Properly-ordered dimer models}

For a node in a dimer model,
the set of zigzag paths going through the edges adjacent to it
has a natural cyclic ordering
given by the directions of the outgoing paths
from the node.
On the other hand,
the homology classes of these zigzag paths determine
another cyclic ordering
if these classes are distinct.
The following condition is introduced by Gulotta:

\begin{definition}[{Gulotta \cite[Section 3.1]{Gulotta}%
}]
 \label{df:properly-ordered}
A dimer model is {\em properly ordered} if
\begin{itemize}
\item
there is no homologically trivial zigzag path,
\item
no zigzag path has a self-intersection on the universal cover,
 \item
no pair of zigzag paths in the same homology class have a common node, and
 \item
for any node of the dimer model,
the natural cyclic order on the set of zigzag paths
going through that node
coincides with the cyclic order
determined by their homology classes.
\end{itemize}  
\end{definition}

This condition is equivalent
to the consistency condition
in Definition \ref{df:consistency}:

\begin{proposition}[{\cite[Proposition 4.4]{Ishii-Ueda_CCDM}}]
 \label{prop:properly-ordered-consistency}
A dimer model is consistent
if and only if it is properly-ordered.
\end{proposition}

\subsection{The first consistency condition}

Mozgovoy and Reineke
\cite[Condition 4.12]{Mozgovoy-Reineke}
introduced the following condition:

\begin{definition} \label{df:MR}
A dimer model is said to satisfy
the {\em first consistency condition}
in the sense of Mozgovoy and Reineke
if weakly equivalent paths are equivalent.
\end{definition}

The consistency condition
in Definition \ref{df:consistency}
implies this condition:

\begin{lemma}[{\cite[Lemma 3.10]{Ishii-Ueda_CCDM}}]
A consistent dimer model satisfies
the first consistency condition
in the sense of Mozgovoy and Reineke.
\end{lemma}

Mozgovoy and Reineke \cite{Mozgovoy-Reineke}
proved that
the path algebra of the quiver with relation
coming from a dimer model
is a Calabi-Yau 3 algebra
in the sense of Ginzburg \cite{Ginzburg_CYA}
if the dimer model satisfies
the first consistency condition
and one extra condition
which they call the second consistency condition.
The latter condition is shown to be redundant
by Davison \cite{Davison}.
Broomhead has proved the Calabi-Yau 3 property
of the path algebra
for isoradial dimer models
\cite{Broomhead}.
The proof of Theorem \ref{th:main} in this paper
does not rely on any of these results, and
gives an independent proof
of the Calabi-Yau 3 property
of the path algebra
of the quiver with relations
associated with a consistent dimer model
through the derived equivalence
$
 D^b \coh \scM_\theta \cong D^b \module \bC \Gamma.
$

\section{Adjacent zigzag paths and large hexagons}
 \label{sc:large-hexagon}

In this section,
we assume for simplicity
that all divalent nodes are removed from the dimer model.
In this case,
a pair of zigzag paths intersect each other
if and only if they share a common edge,
and one intersection consists of exactly one edge.

\subsection{Adjacent zigzag paths}

Recall from Section \ref{sc:slope}
that the {\em slope} of a zigzag path
on a dimer model is its homology class
considered as an element in $\bZ^2$.
The lack of self-intersection of a zigzag path
in a consistent dimer model
implies the primitivity of its slope.
There may be several zigzag paths
with a given slope.
The set of slopes naturally has a cyclic order,
and a pair of zigzag paths are said
to have {\em adjacent slopes}
if their slopes are adjacent
with respect to this cyclic order.

The following three lemmas are immediate consequences
of Proposition \ref{prop:properly-ordered-consistency}:

\begin{lemma}
 \label{lm:zigzag-adjacency}
If a pair of zigzag paths
in a consistent dimer model
intersect each other more than once on the universal cover,
then their slopes are not adjacent.
\end{lemma}

\begin{proof}
Assume that there is a pair $(a, b)$ of zigzag paths
intersecting twice in the opposite direction
as in Figure \ref{fg:zigzag-non-adjacency}.
\begin{figure}
\centering
\input{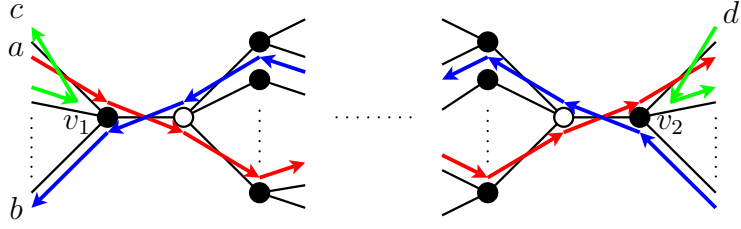}
\caption{A pair of zigzag paths intersecting twice}
\label{fg:zigzag-non-adjacency}
\end{figure}
Let $v_1$ and $v_2$ be the vertices
adjacent to the first and the last edges
where $a$ and $b$ intersect.
Then there are two other zigzag paths $c$ and $d$
such that $c$ intersects with $a$
at the edge adjacent to the vertex $v_1$
and $d$ intersects with $a$
at the edge adjacent to the vertex $v_2$.
Then the slopes of $c$ and $d$ must come
in between $a$ and $b$ by Proposition \ref{prop:properly-ordered-consistency},
preventing them to be adjacent.
\end{proof}

\begin{lemma}
 \label{lm:zigzag-adjacency2}
If a pair of zigzag paths
in a consistent dimer model
have common node other than their intersection.
Then the slopes of this pair of zigzag paths
are not adjacent.
\end{lemma}

\begin{proof}
Since the dimer model is consistent,
it is properly-ordered
by Proposition \ref{prop:properly-ordered-consistency}.
If a pair of zigzag paths have a common node
other than their intersection,
then they are not adjacent
with respect to the cyclic order
around that node.
Now it follows
from Definition \ref{df:properly-ordered}
that their slopes are not adjacent.
\end{proof}

\begin{lemma} \label{lemma:independent}
If a dimer model is consistent,
then there is a pair of zigzag paths
with linearly independent slopes.
\end{lemma}

\begin{proof}
A dimer model always have a node
with valence greater than two.
Then there are at least three zigzag paths at the node
whose slopes are different by the properly-orderedness.
\end{proof}

\subsection{Large hexagons}
Lemmas \ref{lm:zigzag-adjacency}
and \ref{lm:zigzag-adjacency2}
show that a pair of zigzag paths
with adjacent slopes
in a consistent dimer model
behave like a pair of lines;
they have no self-intersection,
and any pair of lifts to the universal cover
intersect exactly once.
Any pair of lines on a torus
divides the torus into parallelograms.
Since an intersection of a pair of zigzag paths
in a consistent dimer model consists
of an edge instead of a point,
they divide the torus into hexagons
instead of parallelograms.

\begin{definition}
Let $G = (B, W, E)$ be a consistent dimer model
on a torus $T$
and $(z, w)$ be a pair of zigzag paths on $G$
with adjacent slopes.
A {\em large hexagon}
is a connected component of
the complement $T \setminus (z \cup w)$
of the union of the pair of zigzag paths.
\end{definition}

Figure \ref{fg:large_square_tile} shows a part of
a large square tiling,
and an example of a collection of zigzag paths
with adjacent slopes is shown
in Figure \ref{fg:large_square_tile_zigzag}.
One can see that these zigzag path
divides the torus into large hexagons 
as shown in Figure \ref{fg:large_square_tile_zigzag_hexagon}.

\begin{figure}[htbp]
\centering
\input{large_square_tile.pst}
\caption{A part of a large square tiling}
\label{fg:large_square_tile}
\end{figure}

\begin{figure}[htbp]
\centering
\input{large_square_tile_zigzag.pst}
\caption{A pair of zigzag paths with adjacent slopes}
\label{fg:large_square_tile_zigzag}
\end{figure}

\begin{figure}[htbp]
\centering
\input{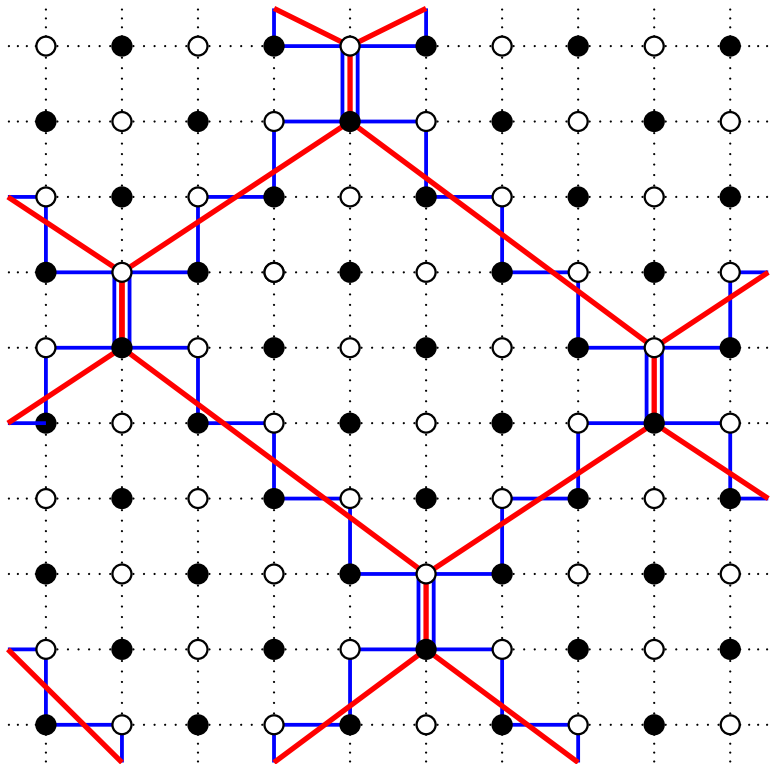}
\caption{Large hexagons}
\label{fg:large_square_tile_zigzag_hexagon}
\end{figure}

\begin{figure}[htbp]
\centering
\input{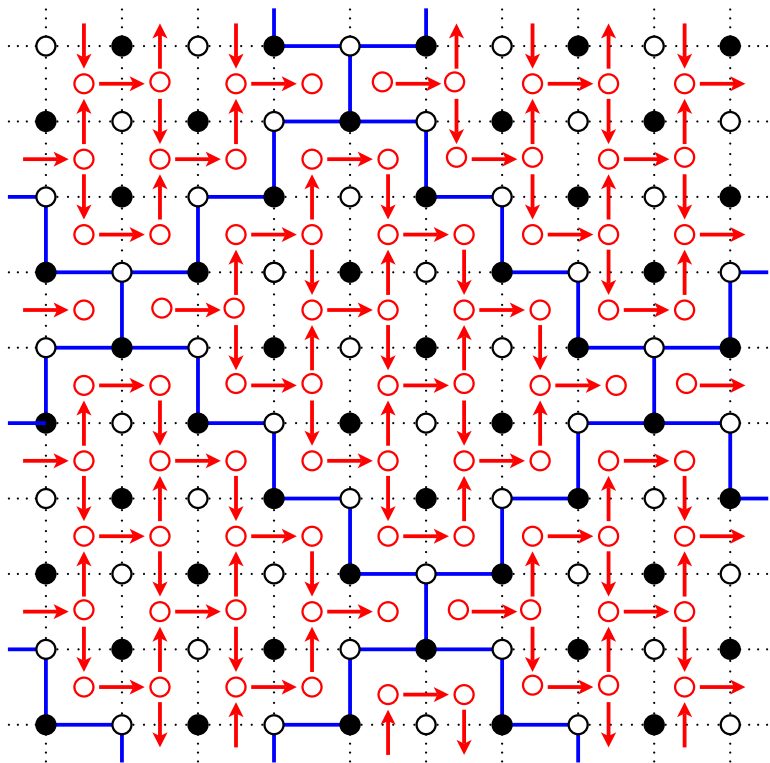}
\caption{Sources and sinks}
\label{fg:large_square_tile_zigzag_quiver}
\end{figure}

By removing arrows dual to edges
in the pair of zigzag paths,
the quiver associated with the dimer model
is divided into disjoint union of subquivers,
each of whose connected components
are in one-to-one correspondence
with a large hexagon.
Inside each such connected subquiver,
there are a pair of distinguished vertices
called the {\em source} and the {\em sink}.
The source vertex is characterized by the existence
of a path from that vertex
to any other vertex in the subquiver,
and the sink vertex is characterized by the dual property
that there is a path of the subquiver
from any other vertex
to the sink vertex.
The arrow dual to an intersection
of the pair of zigzag paths goes
from the source vertex of one large hexagon
to the sink vertex of an adjacent large hexagon.
See Figure \ref{fg:large_square_tile_zigzag_quiver}
for an example of the subquivers
and their source and sink vertices.

\subsection{Large hexagons and the McKay quiver}
 \label{sc:large-McKay}

The tessellation by large hexagons forms a new dimer model,
and as in Section \ref{sc:McKay},
the resulting quiver $\Lambda$ with relations can be identified
with the McKay quiver for a suitable finite subgroup $A \subset \SL(3, \bC)$
acting on $\bC^3 = \Spec \bC[x, y, z]$
in the following way:
\begin{itemize}
 \item
Choose any vertex of $\Lambda$ and
identify it with the trivial representation.
 \item
The arrow dual to an intersection of
the two zigzag paths
is identified with ``multiplication by $z$''.
 \item
The cyclic order of three arrows
starting from a vertex of $\Lambda$
coming from the orientation of the torus is given by $(x, y, z)$.
\end{itemize}
The fact that we have taken only a pair of zigzag paths
with adjacent slopes,
so that there is only one zigzag path
in each slope,
implies that $\rho_x$ generates the character group $A^*$,
and so does $\rho_y$ under the notation in Section \ref{sc:McKay}.
Hence the subgroup $A \subset  \SL(3, \bC)$
is obtained by embedding a finite small subgroup $A \subset \GL(2, \bC)$
into $\SL(3, \bC)$.

\section{Consistent dimer models are non-degenerate}
 \label{sc:non-degeneracy}

We prove the following in this section:

\begin{proposition} \label{prop:non-degenerate}
A consistent dimer model is non-degenerate.
\end{proposition}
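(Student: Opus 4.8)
The plan is to prove non-degeneracy in the equivalent form that every edge $e$ of the dimer model is contained in some perfect matching. Fix such an edge, with corresponding arrow $a_0\colon u\to v$, and let $z$, $z'$ be the two zig-zag paths through $e$, with slopes $\mu$, $\nu$; writing $e=e^{(i)}$ among the edges $e^{(1)},e^{(2)},\dots$ at its white endpoint taken in cyclic order, $z$ runs along $e^{(i-1)},e^{(i)}$ and $z'$ along $e^{(i)},e^{(i+1)}$. First I would produce a perfect matching attached to a corner of the Newton polygon and adapted to $e$: choose a slope $\mu'$ adjacent to $\mu$ so that the open arc from $\mu$ to $\mu'$ misses $\nu$, and consider the union $Z$ of all zig-zag paths of slope $\mu$ or $\mu'$. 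By the results of this section $Z$ cuts $T$ into large hexagons, and by Lemma~\ref{lm:properly_ordered} the zig-zag paths around every node occur in the cyclic order dictated by their slopes. Define $D\subset E$ by selecting, at each node, the unique incident edge that lies in this cyclic order between the zig-zag path of slope $\le\mu$ closest to $\mu$ and the one of slope $\ge\mu'$ closest to $\mu'$. The point of the choice of $\mu'$ is that the corresponding corner direction lies in the arc of slopes ``owned by'' $e$; granting that $D$ is a perfect matching, $e$ is then the selected edge at each of its endpoints, so $e\in D$.

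The substance of the proof is to show that this recipe does define a perfect matching, i.e.\ that an edge is selected at one endpoint if and only if it is selected at the other. I would check the matching condition one large hexagon at a time: inside a large hexagon the selected edges are forced, forming ``every other edge'' along the two zig-zag boundary arcs anchored at the distinguished source and sink vertices, and along the zig-zag paths bounding the hexagons the selection is forced as well. The only way the forced local data could fail to glue is along a closed chain of large hexagons; tracing such a chain gives a closed path $c$ on the quiver along which the two candidate sides would be weakly equivalent without being equivalent --- or a nontrivial loop weakly equivalent to a power of a small loop. Corollary~\ref{cr:minimal_path} is used here to see that the ``straight'' paths running alongside the zig-zag paths are minimal, so that no small loop can be factored out to reconcile the two sides, and Lemma~\ref{lm:consistency_implies_first_consistency} then contradicts consistency.

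The main obstacle I expect is exactly this gluing step --- assembling the hexagon-by-hexagon forced choices into one globally coherent perfect matching, which includes the assertion that the arc of directions owned by an edge is the same whether computed at its white or at its black endpoint. The remaining ingredients (the reduction to ``every edge lies in a perfect matching'', the cyclic-order bookkeeping around a single node, and the fact that the corner matching so obtained contains the prescribed edge) are routine once the large-hexagon picture of this section is available, and, as promised in Section~\ref{sc:consistency}, none of them uses non-degeneracy.
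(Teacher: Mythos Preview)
Your node-local rule---select at each node the edge sitting in the cyclic gap between the zig-zag path nearest $\mu$ and the one nearest $\mu'$---is essentially Gulotta's description of a corner perfect matching, and it is a legitimate route; but it is not what the paper does, and the verification you outline has a gap.

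The paper's construction is different and more direct. It takes a \emph{single} pair $(z,w)$ with $e\in z$ and $w$ of an adjacent slope (not the union of all zig-zag paths of those slopes), fixes a matching on $z\cup w$ containing $e$, and then declares the matching inside each large hexagon to consist of those interior edges that are \emph{not crossed by any minimal path from the source to the sink}. The check that this is a perfect matching is entirely local to one hexagon: if some connected component of uncrossed interior edges has two or more edges, it can have no endpoint and therefore separates the hexagon, so the two boundary paths from source to sink (minimal by Corollary~\ref{cr:minimal_path}) cannot be equivalent, contradicting Lemma~\ref{lm:consistency_implies_first_consistency}. No gluing across hexagons and no monodromy argument enters.

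The gap in your sketch is that it only treats nodes lying on $Z$: ``every other edge along the boundary arcs'' says nothing about nodes in the interior of a large hexagon, where your local rule still makes a choice that must be shown consistent at both endpoints of each edge. The failure mode here is not a global obstruction around a closed chain of hexagons; it is precisely the local question you yourself flag---whether the arc of directions owned by an edge is the same at its white and at its black endpoint---and that question is not resolved by your hexagon-boundary discussion. Either prove that directly (a short local computation from the turning conventions at white and black nodes together with Lemma~\ref{lm:properly_ordered}), or switch to the paper's minimal-path description of the interior matching, which bypasses the issue entirely.
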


\begin{proof}
We may assume there is no divalent node.
For an edge $e$ in a consistent dimer model,
choose a zigzag path $z$ containing the edge.
Choose another zigzag path $w$
whose slope is adjacent to that of $z$.
Then $z$ and $w$ divide the torus into large hexagons.
In each large hexagon,
there are two paths $p$ and $q$
from the source to the sink
along $z \cup w$
as shown in Figure \ref{fg:large_square_tile_u}.
\begin{figure}[htbp]
\centering
\input{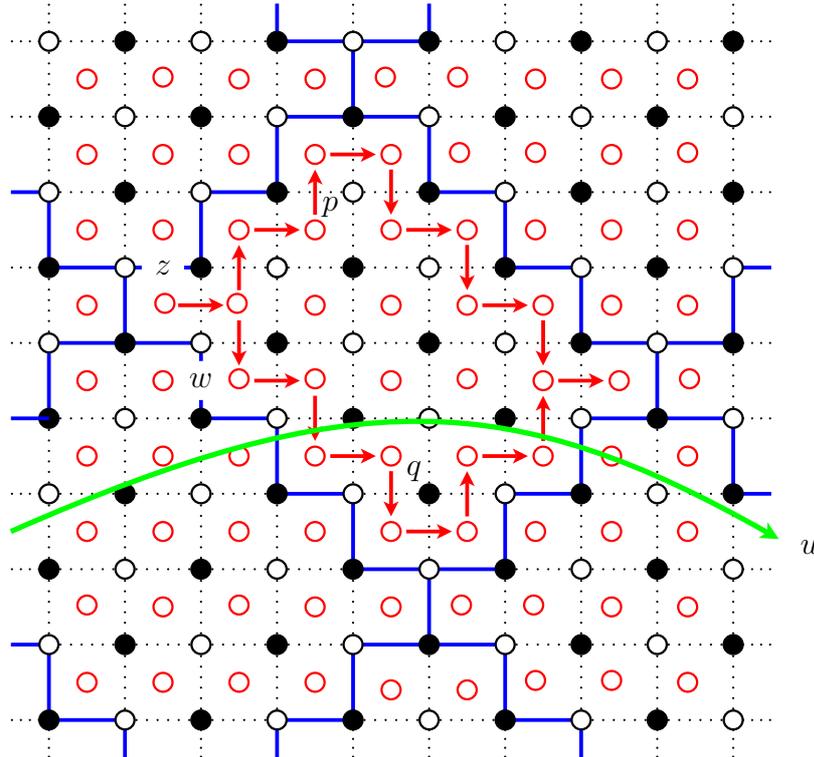}
\caption{Two minimal paths $p$ and $q$ inside a large hexagon}
\label{fg:large_square_tile_u}
\end{figure}
One path $p$ starts from the source vertex,
goes along the zigzag path $z$
until $z$ intersects with $w$,
from which point the path goes along $w$
and arrive at the sink vertex.
The other path $q$ starts from the source vertex,
goes along the zigzag path $w$
until $w$ intersects with $z$,
from which point the path goes along $z$.
Both $p$ and $q$ are minimal:
Assume that one of these paths are not minimal.
Then there is another zigzag path $u$
which intersect this path in the same direction
more than once
by \cite[Lemma 3.11]{Ishii-Ueda_CCDM}.
This implies that the slope of the zigzag path $u$ comes
in between the slopes of $z$ and $w$
with respect to the natural cyclic order
on the set of slopes
as shown in Figure \ref{fg:large_square_tile_u}.
This contradicts the adjacency of slopes
of $z$ and $w$.

Take the set $D_1$ of every other edges
on the union of $z$ and $w$
starting from the edge $e$,
and take the union $D = D_1 \cup D_2$
with the set $D_2$ of edges
in the interiors of the large hexagons
which are not crossed by any minimal path
from the source to the sink.
See Figure \ref{fg:large_square_tile_zigzag_pm}
for an example
when $e$ is at the intersection of $z$ and $w$.
We show that $D$ is a perfect matching:

Let $n$ be a node on the union
$z \cup w$ of the zigzag paths.
Then it is clear from the construction
that there is a unique edge in $D_1$ adjacent to $n$
and no edge in $D_2$ is adjacent to $n$.

Take a node $n$ in the interior of a large hexagon.
We show that there is a unique edge in $D_2$ connected to $n$.
Since $p_1$ and $p_2$ are minimal paths
with the same source and the target,
they are equivalent.
Since $p_1$ and $p_2$ are not homotopic
in $T \setminus \{n\}$,
there are two minimal paths $q_1$ and $q_2$
from the source vertex to the sink vertex
inside the large hexagon
such that
$p_1$ is homotopic to $q_1$ in $T \setminus \{n\}$,
$p_2$ is homotopic to $q_2$ in $T \setminus \{n\}$, and
$q_2$ is obtained from $q_1$
by replacing $p_+(a)$ by $p_-(a)$
for an arrow $a \in A=E$.
Then $a$ must be adjacent to $n$ and
either $q_1$ or $q_2$ passes through
all edges incident to $n$ except $a$.
Hence it suffices to show $a \in D_2$.
Let $r$ be a minimal path from the source to the sink.
Then $r$ intersects neither $z$ nor $w$
by \cite[Lemma 3.11]{Ishii-Ueda_CCDM}
and hence $r$ stays inside the large hexagon.
Take a zigzag path $y$ which passes through $a$.
Since the dimer model is consistent and $z$ and $w$ have adjacent slopes,
$y$ divides the large hexagon into two connected components
such that the source and the sink are not in the same component.
By \cite[Lemma 3.7]{Ishii-Ueda_CCDM},
the number of intersections of $y$ with $r$
coincides with that of $y$ with $p_i \equiv q_i$, which is $1$.
If $r$ passes through $a$,
the direction of the intersection with $y$ is different
from that of the intersection of $q_i$ with $y$,
which is a contradiction.
This shows $a \in D_2$,
and Proposition \ref{prop:non-degenerate} is proved.
\end{proof}

\begin{definition}
For a pair $(z, w)$ of zigzag paths with adjacent slopes,
the perfect matching
obtained as in the proof of Proposition \ref{prop:non-degenerate}
containing the edge at the intersection is said to
{\em come from a pair of zigzag paths with adjacent slopes}.
\end{definition}


\begin{figure}[htbp]
\centering
\input{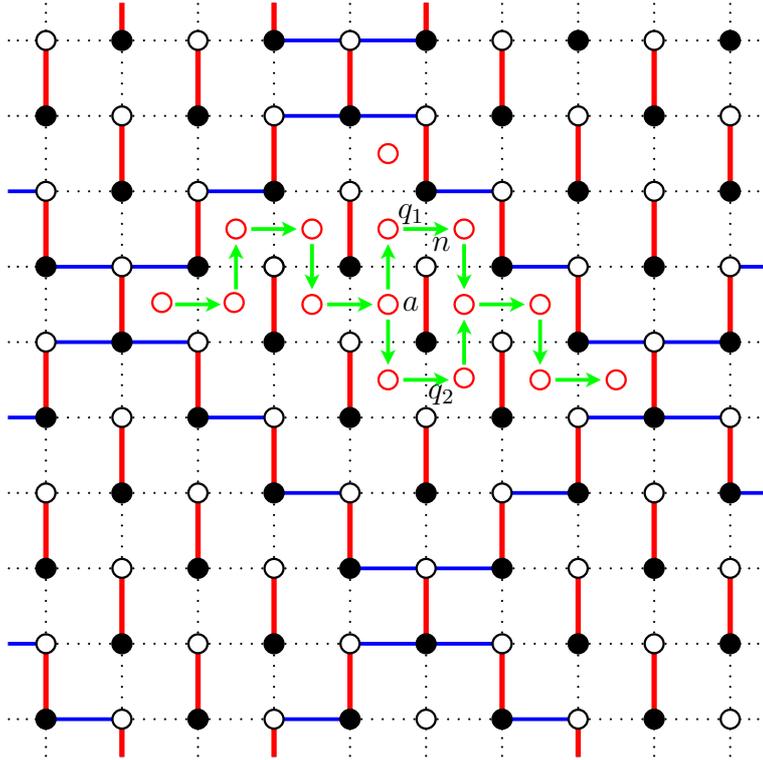}
\caption{The perfect matching
associated with a pair of zigzag paths}
\label{fg:large_square_tile_zigzag_pm}
\end{figure}

Recall from Section \ref{sc:pm_moduli}
that a path $p$ on a quiver is {\em allowed}
by a perfect matching $D$
if the path $a$ does not contain any arrow
dual to an edge in $D$.
The proof of Proposition \ref{prop:non-degenerate}
also shows the following:

\begin{lemma}
Let $(z, w)$ be a pair of zigzag paths
with adjacent slopes and
$D$ be the corresponding perfect matching.
Then for any large hexagon,
one has the following:
\begin{enumerate}
\item
For any vertex $v$ in the large hexagon,
there is a path allowed by $D$ inside the large hexagon
from the source vertex
to the vertex $v$.
\item
For any vertex $v$ in the large hexagon,
there is a path allowed by $D$ inside the large hexagon
from the vertex $v$
to the sink vertex.
\end{enumerate}
\end{lemma}

\section{Corner perfect matchings} \label{sc:corner}

In this section,
we prove the following
as an application of large hexagons:

\begin{proposition} \label{prop:large_hexagon}
Let $G$ be a consistent dimer model.
Then any choice
of a pair of zigzag paths with adjacent slopes
determines a corner $\frakc$
of the characteristic polygon $\Delta$
and induces the following:
\begin{enumerate}
 \item
The division of the torus $T = \bR^2 / \bZ^2$
into large hexagons.
 \item
A functor
$$
 \phi_\frakc^* :  \module (\bC[x,y,z]  \rtimes A)
  \to \module \bC \Gamma
$$
from the category of representations of the McKay quiver
of some finite small abelian subgroup
$A \subset \GL_2(\bC) \subset \SL_3(\bC)$
to that of the path algebra of the quiver $\Gamma$ with relations
associated with the dimer model $G$.
 \item
An embedding
$$
 \varphi_\frakc : \ahilb(\bC^3) \hookrightarrow \scM_\theta
$$
of the $A$-Hilbert scheme
as an open subscheme
of the moduli space $\scM_{\theta}$
for some generic stability parameter $\theta$.
\end{enumerate}
\end{proposition}

We also show the following characterization
of corner perfect matchings in this section:

\begin{proposition} \label{prop:corner_perfect_matching}
The following are equivalent
for a perfect matching $D$
in a consistent dimer model:
\begin{enumerate}
 \item
$D$ is simple.
 \item
$D$ is multiplicity free.
 \item
$D$ is a corner perfect matching.
 \item
$D$ comes from a pair of zigzag paths with adjacent slopes.
\end{enumerate}
\end{proposition}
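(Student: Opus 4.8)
The plan is to establish the equivalence by the cycle of implications $(1)\Rightarrow(2)\Rightarrow(3)\Rightarrow(4)\Rightarrow(1)$, using throughout the tessellation of $T$ into large hexagons by a pair of zig-zag paths with adjacent slopes, Proposition~\ref{prop:non-degenerate}, and the lemma stated just after it. As a quick consistency check I would also record $(4)\Rightarrow(3)$ directly: if $D$ comes from a pair $(z,w)$ of zig-zag paths with adjacent slopes then by construction $D$ contains every other edge of $z$ and every other edge of $w$, so by \eqref{eq:zigzag} it simultaneously minimizes the functionals $\langle h(-),[z]\rangle$ and $\langle h(-),[w]\rangle$; since $[z]$ and $[w]$ are adjacent slopes the corresponding faces of the Newton polygon are two edges sharing a vertex, which forces $h(D)$ to be that vertex, i.e.\ $D$ is a corner perfect matching.

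First I would prove $(4)\Rightarrow(1)$. Let $D$ be the perfect matching constructed in the proof of Proposition~\ref{prop:non-degenerate}. By the lemma following that proposition there is, in each large hexagon, a $D$-allowed path from its source to every quiver vertex lying inside it, and a $D$-allowed path from every such vertex to the sink. It remains only to connect distinct hexagons. For a pair of adjacent hexagons, the arrow of $\Lambda$ running from the source of one to the sink of the other is the image under $h\colon\bC\Gamma\to\bC\Lambda$ of a boundary arrow of $\Gamma$ that does not lie in $D$, hence is itself a $D$-allowed path in $\Gamma$; since $\Lambda$ is the McKay quiver of a finite abelian subgroup of $\SL_3(\bC)$ it is strongly connected, so concatenation produces a $D$-allowed path between any ordered pair of quiver vertices. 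This is exactly the statement that the representation attached to $D$ is simple.

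For $(1)\Rightarrow(2)$, let $D$ be simple and let $D'$ be a perfect matching with $h(D')=h(D)$; I would show $D'=D$. The symmetric difference $D\mathbin{\triangle}D'$, viewed on the universal cover, is a disjoint union of cycles whose homology classes sum to zero. Simplicity of $D$ forces $D$ to meet every zig-zag path in an alternating family of edges (a larger gap along a zig-zag path would obstruct some allowed path), and from this I would deduce that each cycle of $D\mathbin{\triangle}D'$ is null-homotopic and bounds a region on which $D$ and $D'$ disagree. Running the argument of Lemma~\ref{lm:consistency_implies_first_consistency} along the boundary of an innermost such region then yields a pair of weakly equivalent but inequivalent paths, contradicting consistency; hence $D$ is multiplicity free.

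It remains to close the cycle with $(3)\Rightarrow(4)$ and $(2)\Rightarrow(3)$. For $(3)\Rightarrow(4)$: if $h(D)$ is a vertex $v$ of the Newton polygon, the two edges of the polygon through $v$ correspond to a pair $[z],[w]$ of zig-zag path classes adjacent in the cyclic order; choosing zig-zag paths $z,w$ with these slopes for which $D$ contains the appropriate alternating halves, I would check that extremality of $h(D)$ forces $D|_{z\cup w}$ to be exactly the perfect matching on $z\cup w$ containing the arrows of the intersections of $z$ and $w$ that issue from the sources of the large hexagons, and forces the remaining edges of $D$ to lie in the hexagon interiors and to avoid every minimal source-to-sink path (Corollary~\ref{cr:minimal_path}); thus $D$ comes from $(z,w)$. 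The implication I expect to be the main obstacle is $(2)\Rightarrow(3)$, which I would attack in the contrapositive: if $h(D)$ is not a vertex of the Newton polygon then $D$ has multiplicity at least $2$. If $h(D)$ lies in the relative interior of an edge $e$, then in a consistent dimer model there are at least two zig-zag paths realizing the primitive normal direction of $e$, and ``sliding'' $D$ across the strip bounded by two consecutive such zig-zag paths produces a distinct perfect matching with the same height change; if $h(D)$ is an interior lattice point, one applies a similar slide in either of two independent zig-zag directions. The delicate point — and the place where all three clauses of the consistency condition are genuinely used — is to verify that such a slide, which must alter exactly one of the two extremal height-coordinates while leaving the other unchanged, does yield a perfect matching and not merely a collection of edges; I expect the bulk of the technical work to reside here.
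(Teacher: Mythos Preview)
Your cycle $(4)\Rightarrow(1)$ and the check $(4)\Rightarrow(3)$ are essentially the paper's Steps 1 and 2, and they are fine.  The remaining implications, however, are handled in the paper by moduli-theoretic arguments that bypass the combinatorial difficulties you have set up for yourself, and your combinatorial sketches contain real gaps.

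For $(1)\Rightarrow(2)$ the paper argues as follows: a simple module $M$ is $\theta$-stable for \emph{every} $\theta$; if $N$ is any representation with the same height change, choose a generic $\theta$ for which $N$ is $\theta$-stable, and then $M$ and $N$ are both $\theta$-stable points on the same torus orbit in $\scM_\theta$, hence correspond to the same perfect matching.  Your proposed route via the symmetric difference $D\mathbin{\triangle}D'$ rests on the claim that simplicity forces $D$ to meet every zig-zag path in an alternating family of edges, but this is false: a corner perfect matching $D$ meets only the two zig-zag paths with adjacent slopes in this alternating way (equality in \eqref{eq:zigzag}), while for all other zig-zag paths the inequality is strict and $D$ has genuine gaps along them.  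Without that claim the reduction to null-homotopic cycles collapses.

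For $(2)\Rightarrow(3)$ and $(1)\Rightarrow(3)$ the paper again goes through the moduli space: if $D$ is simple, the associated module is $\theta$-stable for all $\theta$, so the corresponding toric divisor is never contracted under $\scM_\theta\to\scMbar_0$, which forces it to correspond to a vertex of the polygon.  For the converse direction of multiplicity-free $\Leftrightarrow$ simple, the paper observes that a non-simple $M$ is destabilised by some $\theta$, and projectivity of $\scM_\theta\to\scMbar_0$ then produces a different $\theta$-semistable $N$ with the same height change.  Your sliding argument for the contrapositive $\neg(3)\Rightarrow\neg(2)$ may be completable, but as you note the verification that the slide yields a perfect matching is nontrivial, and the paper simply avoids it.  The $(3)\Rightarrow(4)$ direction in the paper is also cleaner than yours: rather than analysing the structure of an arbitrary corner matching $D$, one just observes that the convex hull of the height changes of matchings coming from adjacent zig-zag pairs already contains every height change by \eqref{eq:zigzag}, so no other matching can be at a corner.
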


We first prove
Proposition \ref{prop:corner_perfect_matching}.
The proof is divided into four steps:

\begin{step} \label{st:corner_zigzag}
A perfect matching is a corner perfect matching
if and only if it comes
from a pair of zigzag paths with adjacent slopes.
\end{step}

\begin{proof}
The if part follows from the fact
that the height change of a perfect matching
coming from a pair of zigzag paths with adjacent slopes
satisfies the equality in the inequality \eqref{eq:zigzag}
coming from both of these zigzag paths.

To show the only if part, consider three zigzag paths
$z_1, z_2, z_3$ with consecutive slopes.
Let $D_1$ and $D_2$ be the perfect matchings coming from
$z_1, z_2$ and $z_2, z_3$ respectively.
Then \eqref{eq:zigzag} implies
$ \langle h(D, D_1), [z_2] \rangle \le 0$
for any $D$, where the equality holds for $D=D_1, D_2$.
This shows that
the line segment connecting the height changes of the corner perfect
matchings $D_1$ and $D_2$ is on the boundary of the Newton polygon.
In this way, we see that every corner perfect matching comes from a pair
of zigzag paths with adjacent slopes.
\end{proof}

\begin{step} \label{st:pm_simple}
A perfect matching
coming from a pair of zigzag paths with adjacent slopes
is simple.
\end{step}

\begin{proof}
We have to show that
the corresponding quiver representation $M$ is simple,
i.e., has no non-trivial submodule.
This follows from the fact
that in a perfect matching
coming from a pair of zigzag paths with adjacent slopes,
one can find an allowed path
from any vertex to any other vertex in the quiver.
Indeed, starting from any vertex,
one can first go to the sink of the large hexagon $h_1$
where the vertex belongs,
and then to the adjacent vertex which is the source of adjacent large hexagon $h_2$
by the path going around one of the nodes on the edge
separating $h_1$ and $h_2$.
Recall that one can go from the source of a large hexagon
to any other vertex in the same large hexagon
only through an allowed path.
Note also that one can go
from the source of one large hexagon
to the source of another large hexagon
adjacent in the $x$- and $y$-direction.
Since one can go from one large hexagon
to any other large hexagon
by multiplying sufficiently many $x$ and $y$,
Step \ref{st:pm_simple} is proved.
\end{proof}

\begin{step}
A simple perfect matching is a corner perfect matching.
\end{step}

\begin{proof}
Since simple modules are $\theta$-stable
for any $\theta$, the divisor corresponding to
a simple perfect matching is not contracted in
the affine quotient $\scMbar_0$.
Hence it must be a corner perfect matching.
\end{proof}

\begin{step}
A perfect matching is multiplicity-free
if and only if it is simple.
\end{step}

\begin{proof}
Let us first prove the only if part:
Assume $M$ has a non-trivial submodule.
Then one can find a stability parameter $\theta$
such that $M$ is not $\theta$-semistable.
Since $M$ is $0$-semistable and
the map $\scM_\theta \to \scMbar_0$ is projective,
there is another $\theta$-semistable representation $N$
with the same height change.

Now we prove the if part:
Assume that $M$ is simple and take any module $N$
with the same height change as $M$.
Choose a stability parameter $\theta$ such that
semistability implies stability and $N$ is $\theta$-stable
\cite[Lemma 6.2]{Ishii-Ueda_08}.
Since $M$ is also $\theta$-stable with the same height change as $N$,
the modules $N$ and $M$ must belong to the same $\bT$-orbit,
so that the corresponding perfect matchings are identical.
\end{proof}



This completes the proof of
Proposition \ref{prop:corner_perfect_matching}.
The proof of Step \ref{st:corner_zigzag} also shows the following:

\begin{corollary}\label{cor:zigzag}
The set of slopes of zigzag paths in a consistent dimer model
is in one-to-one correspondence
with the set of sides of the characteristic polygon,
so that each slope is normal to the corresponding side.
\end{corollary}

Let $A \subset \GL(2, \bC)$ be the finite small subgroup
whose McKay quiver $\Lambda$ is identified
with the tessellation by large hexagons
as in Section \ref{sc:large-McKay}.
We discuss the embedding of $\ahilb(\bC^3)$ into $\scM_{\theta}$
for a suitable choice of $\theta$.
Let $D$ be the perfect matching
coming from a pair of zigzag paths with adjacent slopes.
We regard quivers as categories
as in Section \ref{sc:quiver-category},
and define a functor
$$
 \phi_\frakc : \Gamma \to \Lambda
$$
as follows:
\begin{itemize}
\item
A vertex of $\Gamma$ is sent to the large hexagon containing it.
 \item
An arrow inside a large hexagon that is not contained in $D$
goes to the identity of the large hexagon.
 \item
An arrow inside a large hexagon that is contained in $D$
goes to the small cycle at the large hexagon.
 \item
Suppose an arrow $a$ of $\Gamma$ is on the boundary of two large hexagons.
Let $b$ be the arrow of $\Lambda$ connecting the two large hexagons.
If $a$ is in the same direction as $b$, then $a$ goes to $b$.
If $a$ is in the opposite direction, then $a$ goes to the path of
length two that connects the two large hexagons
in the same direction as $a$.
\end{itemize}
Recall that a representation of a quiver is regarded as a functor
from the quiver as a category to the category of vector spaces.
Thus $\phi_\frakc$ induces a functor
$$
 \phi_\frakc^*: \module \bC \Lambda \to \module \bC \Gamma.
$$
The functor $\phi_\frakc^*$ sends a $G$-cluster
to a representation of $\Gamma$
with dimension vector $(1, \dots, 1)$.

Let $h_0$ be the large hexagon
identified with the trivial representation
in the McKay quiver $\Lambda$ of $A$.
Choose a parameter
$
 \eta \in \Hom(\bZ^V, \bQ)
$
satisfying the following:
\begin{itemize}
 \item
If a vertex $v$ is not the source of a large hexagon, then $\eta(v)=1$.
 \item
The sum of $\eta(v)$ inside a fixed large hexagon is $0$.
\end{itemize}
Then take a sufficiently small $\epsilon >0$ and
define a stability parameter $\theta \in \Hom(\bZ^V, \bQ)$ as follows:
\begin{itemize}
 \item
If $v$ is the source of a large hexagon other than $h_0$,
then $\theta(v)=\eta(v)+\epsilon$.
 \item
If $v$ is the source of $h_0$,
then $\theta(v)= \eta(v) - (\# A -1)\epsilon$.
 \item
For other vertices $v$, we set $\theta(v)=\eta(v)$.
\end{itemize}
One can easily see that every $A$-cluster goes
to a $\theta$-stable representation of $\Gamma$.
This gives an open immersion
$\ahilb(\bC^3) \to \scM_{\theta}$, and
Proposition \ref{prop:large_hexagon} is proved.

\section{Description of the algorithm}
 \label{sc:algorithm}


\subsection{Removal of edges}
 \label{ss:algorithm_edges}

Let $G = (B, W, E)$ be a consistent dimer model.
The algorithm to remove the corner $\frakc$
from the characteristic polygon $\Delta$ is the following:

\begin{enumerate}
 \item[0.]
 Remove all divalent nodes. 
This step in fact is not necessary
but simplifies the exposition below.
 \item \label{st:zigzag}
Choose a pair of zigzag paths with adjacent slopes
corresponding to the corner $\frakc$.
 \item \label{st:origin}
Choose an identification of the resulting large hexagons
with vertices of the McKay quiver
for a finite small abelian group
$
 A \subset \GL_2(\bC) \subset \SL_3(\bC)
$
by choosing the large hexagon
corresponding to the trivial representation.
 \item \label{st:removal}
Remove the edges of the dimer
corresponding to the arrows of the quiver
going from the sources of the large hexagons
corresponding to special representations
to the sinks of the adjacent large hexagons
related by ``multiplication by $z$''.
\end{enumerate}

One has a choice
in Steps \ref{st:zigzag} and \ref{st:origin},
and the result of the operation depends on this choice.
See Section \ref{ss:algorithm_examples}
below for examples.

\subsection{Inversion of arrows}
 \label{ss:algorithm_arrows}

Removing an edge of a dimer model corresponds to merging  adjacent vertices into a single vertex.
It also corresponds to adding an inverse arrow under a mild condition,
which is always satisfied when we remove a corner
from the characteristic polygon
of a consistent dimer model:

\begin{lemma} \label{lemma:operation-dimer}
Let $G=(B, W, E)$ be a dimer model without divalent nodes, and
$\Gamma$ be the associated quiver with relations.
Let $S$ be a subset of $E$, and
assume that every node is contained in at least two edges in $E \setminus S$
and that there is a pair of linearly independent cycles on $T$ consisting of
edges in $E\setminus S$. 
\begin{enumerate}
\item
If there are no (not necessarily oriented) cycles of $\Gamma$
consisting of arrows in $S$,
then $G'=(B, W, E\setminus S)$ is again a dimer model,
and the path algebra associated with $G'$ is Morita equivalent to the
path algebra of the quiver with relations obtained from $\Gamma$ by adding the inverses $a^{-1}$ of arrows $a \in S$ together with relations $a a^{-1} = e_{t(a)}$ and $a^{-1}a = e_{s(a)}$.
Here $e_v$ is the idempotent element
associated with a vertex $v$ of a quiver.
\item
If $G'$ is a dimer model,
then there are no cycles consisting of arrows in $S$.
\end{enumerate}
\end{lemma}

\begin{proof}
The assumption in 1 implies that connected components of $T \setminus \bigcup_{e \in E\setminus S} e$ are simply connected, which ensures that $G'$ is a dimer model.
It is easy to see that the categories of representations of the above two quivers with relations are equivalent to each other.

For 2, assume that there is a cycle consisting of arrows in $S$.
Then the connected component of $T \setminus \bigcup_{e \in E\setminus S} e$ containing the cycle is not simply connected
or contains an isolated node, which implies that $G'$ is not a dimer model.
\end{proof}

We show
in Section \ref{sc:preservation-of-consistency}
that the consistency condition is preserved under the operation
in Section \ref{ss:algorithm_edges},
so that Lemma \ref{lemma:operation-dimer} can be applied.
The point of view of adding inverse arrows will be used
in Sections 
\ref{sc:surj-G-Hilb},
\ref{sc:tilting-general}, and
\ref{sc:surj-general}
to prove the derived equivalence inductively.

\subsection{Examples}
 \label{ss:algorithm_examples}

As an example,
consider the construction of dimer models
for the hexagon in Figure \ref{fg:dp3_diagram}
starting from the dimer model in Figure \ref{fg:2x2}
corresponding to the square lattice polygon
in Figure \ref{fg:2x2_diagram}
by removing two vertices.

\begin{figure}[htbp]
\begin{minipage}{.3 \linewidth}
\centering
\input{2x2_diagram.pst}
\caption{A square}
\label{fg:2x2_diagram}
\end{minipage}
\begin{minipage}{.3 \linewidth}
\centering
\input{2x2-1_diagram.pst}
\caption{A pentagon}
\label{fg:2x2-1_diagram}
\end{minipage}
\begin{minipage}{.3 \linewidth}
\centering
\input{dp3_diagram.pst}
\caption{A hexagon}
\label{fg:dp3_diagram}
\end{minipage}
\end{figure}

\begin{figure}[htbp]
\centering
\input{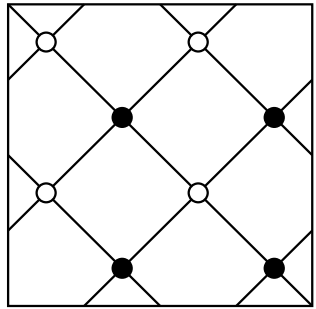}
\caption{A dimer model for the square lattice polygon}
\label{fg:2x2}
\end{figure}

To remove the top left corner from the square lattice polygon
in Figure \ref{fg:2x2_diagram},
we have to choose a pair of zigzag paths,
one from each of those with homology classes
$(-1, 0)$ (shown in red in Figure \ref{fg:2x2_zigzag})
and $(0, 1)$ (shown in blue in Figure \ref{fg:2x2_zigzag}).
There are four choices in Step \ref{st:zigzag},
which actually do not matter for symmetry reasons.
There is no choice in Step \ref{st:origin},
and Figure \ref{fg:2x2-1} shows the resulting dimer model.

\begin{figure}[htbp]
\centering
\input{2x2_zigzag.pst}
\caption{zigzag paths}
\label{fg:2x2_zigzag}
\end{figure}

\begin{figure}[htbp]
\centering
\input{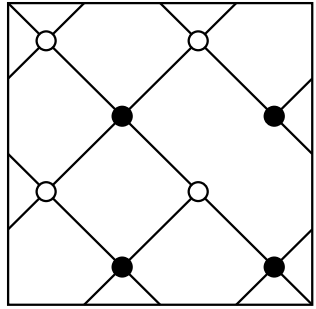}
\caption{The dimer model for the pentagonal lattice polygon}
\label{fg:2x2-1}
\end{figure}

\begin{figure}[htbp]
\centering
\input{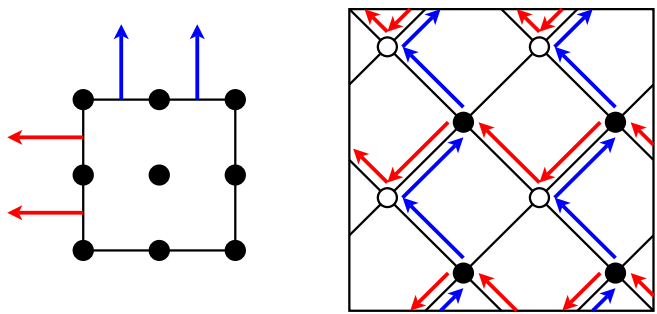}
\caption{zigzag paths}
\label{fg:2x2-1_zigzag}
\end{figure}

Now consider the the removal of the lower-right corner
from the pentagonal lattice polygon in Figure \ref{fg:2x2-1_diagram}.
In this case there are four choices in Step \ref{st:zigzag},
which lead to the dimer models
shown in Figure \ref{fg:2x2-2}.
Note that the dimer models 2 and 4 are obtained
from the dimer models 1 and 3 respectively
by changing the colors of the nodes,
so that the corresponding quivers are related
by the reversal of arrows.

\begin{figure}[htbp]
\centering
\input{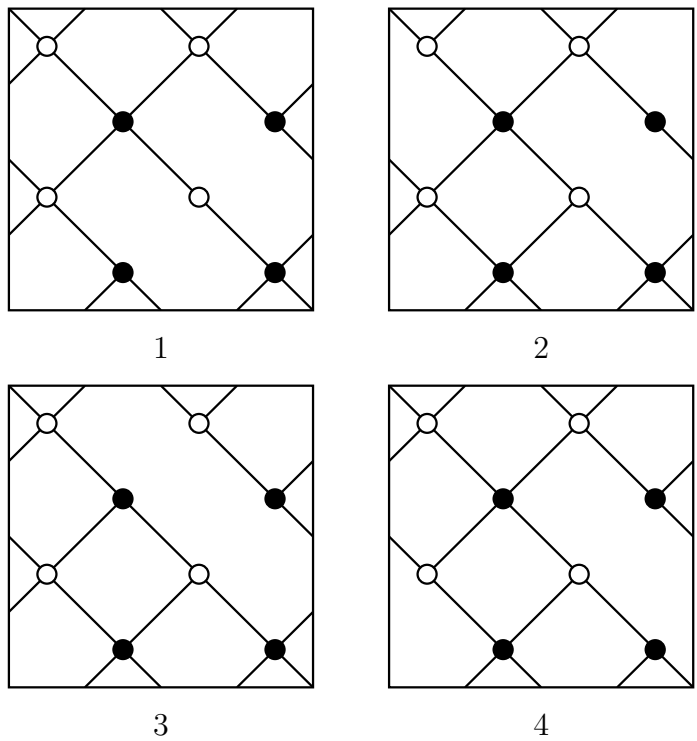}
\caption{Dimer models for the hexagonal lattice polygon}
\label{fg:2x2-2}
\end{figure}

The dimer model 1 in Figure \ref{fg:2x2-2}
has a divalent white node,
and one obtains the dimer model in Figure \ref{fg:hexagon_graph1}
by removing it.
The dimer model 3 is equivalent to the dimer model
shown in Figure \ref{fg:hexagon_graph2}.

\begin{figure}[htbp]
\centering
\input{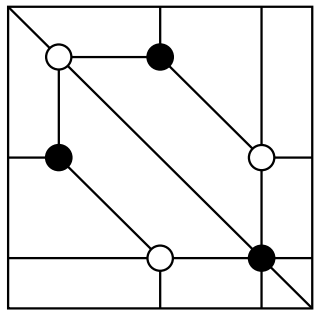}
\caption{A dimer model equivalent to the dimer model 1
in Figure \ref{fg:2x2-2}}
\label{fg:hexagon_graph1}
\end{figure}

\begin{figure}[htbp]
\centering
\input{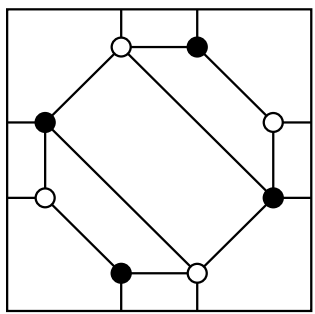}
\caption{A dimer model equivalent to the dimer model 3
in Figure \ref{fg:2x2-2}}
\label{fg:hexagon_graph2}
\end{figure}

The zigzag paths on the dimer model in Figure \ref{fg:hexagon_graph1}
are shown in Figure \ref{fg:hexagon_zigzag1}.

\begin{figure}[htbp]
\centering
\input{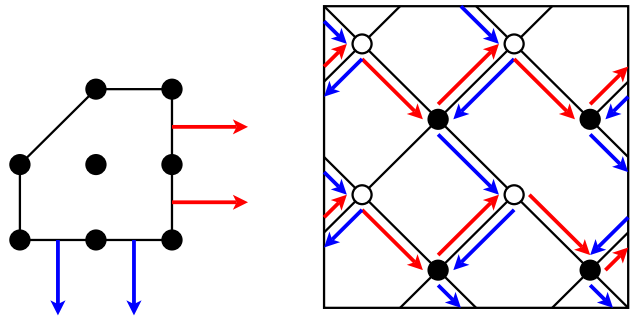}
\caption{zigzag paths}
\label{fg:hexagon_zigzag1}
\end{figure}

From the dimer model in Figure \ref{fg:hexagon_graph1},
one can construct the dimer model for $\bP^2$
by removing three vertices from the lattice polygon
as in Figure \ref{fg:hexagon_triangle}.

\begin{figure}[htbp]
\centering
\input{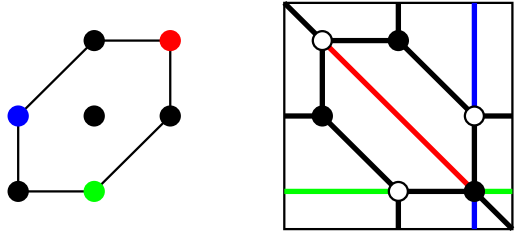}
\caption{From a hexagon to a triangle}
\label{fg:hexagon_triangle}
\end{figure}

Similarly,
from the dimer model in Figure \ref{fg:hexagon_graph1},
one can construct the dimer model for $\bP^1 \times \bP^1$
by removing two vertices from the lattice polygon
as in Figure \ref{fg:hexagon_square}.

\begin{figure}[htbp]
\centering
\input{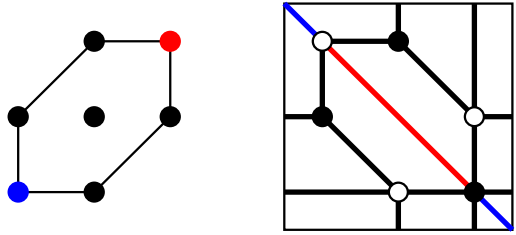}
\caption{From a hexagon to a square}
\label{fg:hexagon_square}
\end{figure}

Next we discuss a simple example
where the special McKay correspondence plays a role.
Let $A = \la \frac{1}{5}(1,2) \ra$
be the subgroup of $\GL_2(\bC)$
generated by $\diag(\zeta, \zeta^2)$
for $\zeta = \exp(2 \pi \sqrt{-1}/5)$.
Recall from Section \ref{sc:continued-fraction}
that the integers
$r$, $b_1, \dots, b_r$ and $i_0, \dots, i_{r+1}$
are defined inductively by
$i_0 := n$, $i_1:=q$, and
$$
 i_t = b_{t+1} i_{t+1} - i_{t+2} \quad (0 < i_{t+2} < i_{t+1})
$$
until we finally obtain $i_r=1$ and $i_{r+1}=0$.
This gives
\begin{align*}
 5 &= 3 \cdot 2 - 1, \\
 2 &= 2 \cdot 1 - 0,
\end{align*}
so that $r = 2$, $(b_1, b_2) = (3, 2)$, and
$(i_0, i_1, i_2) = (5, 2, 1)$.
The continued fraction expansion
\eqref{eq:continued_fraction}
is given by
$$
 \frac{n}{q}
  = \frac{5}{2}
  = b_1 - \cfrac{1}{b_2 - \cfrac{1}{\ddots -\cfrac{1}{b_r}}}
  = 3 - \frac{1}{2},
$$
and the special representations are given
by $\rho_5 = \rho_0$, $\rho_1$ and $\rho_2$.
The McKay quiver for $A$
as a subgroup of $\SL_3(\bC)$ is the quiver
associated with the dimer model
shown in Figure \ref{fg:Z5_graph},
where the parallelogram shows
a fundamental region of the torus.
To remove the top right corner
from the characteristic polygon
shown in Figure \ref{fg:Z5_diagram},
we have to remove edges corresponding to
`multiplication by $z$'
from special representations.
These edges are shown in dotted lines
in Figure \ref{fg:Z5_graph},
and by removing them,
one obtains the dimer model
shown in Figure \ref{fg:Z2_graph1}.
This dimer model contains divalent nodes,
and by removing them,
one obtains the dimer model
shown in Figure \ref{fg:Z2_graph2},
which is exactly the dimer model
corresponding to the characteristic polygon
shown in Figure \ref{fg:Z2_diagram}.

\begin{figure}[htbp]
\begin{minipage}{.5 \textwidth}
\centering
\input{Z5_diagram.pst}
\caption{The characteristic polygon}
\label{fg:Z5_diagram}
\end{minipage}
\begin{minipage}{.5 \textwidth}
\centering
\input{Z2_diagram.pst}
\caption{The characteristic polygon}
\label{fg:Z2_diagram}
\end{minipage}
\end{figure}
\begin{figure}[htbp]
\centering
\input{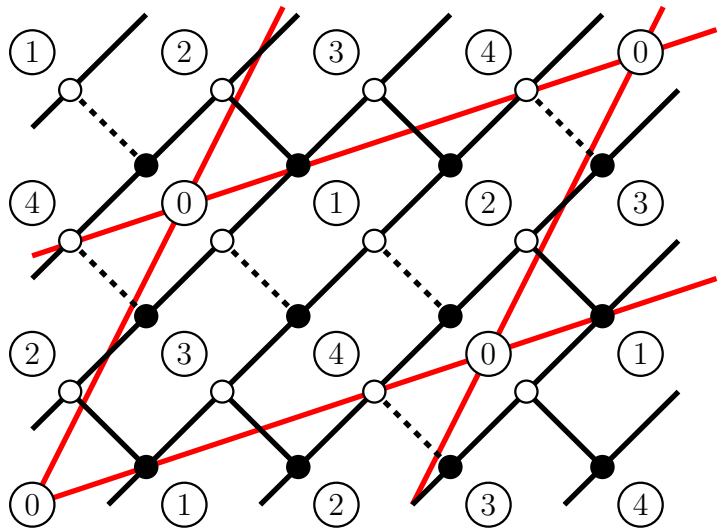}
\caption{The dimer model
associated with $\Delta$}
\label{fg:Z5_graph}
\end{figure}
\begin{figure}[htbp]
\centering
\input{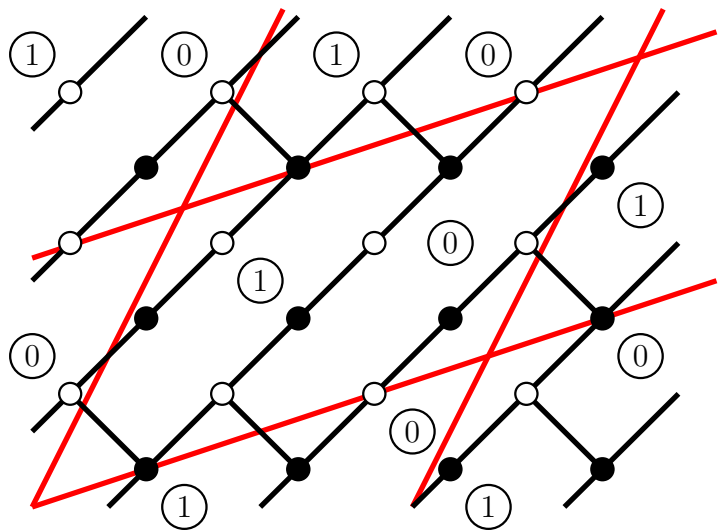}
\caption{The dimer model
after the operation}
\label{fg:Z2_graph1}
\end{figure}
\begin{figure}[htbp]
\centering
\input{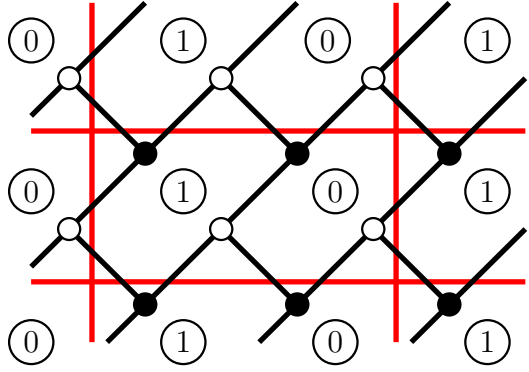}
\caption{The dimer model
after removing divalent nodes}
\label{fg:Z2_graph2}
\end{figure}

\section{Preservation of the consistency}
 \label{sc:preservation-of-consistency}

We use the same notation as in Section \ref{sc:continued-fraction}.
We prove the following in this section:

\begin{proposition} \label{prop:preservation-of-consistency}
A consistent dimer model remains consistent
after the operation described in Section \ref{sc:algorithm},
if the lattice points of the polygon other than the removed one
do not lie on a line.
\end{proposition}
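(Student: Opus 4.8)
The plan is to verify, one by one, that each of the three defining properties of a consistent dimer model (no homologically trivial zig-zag path, no zig-zag path with a self-intersection, no two zig-zag paths intersecting twice in the same direction) survives the vertex-removal operation described in section~\ref{sc:algorithm}. The key is to track how the zig-zag paths of the new dimer model $G'$ are obtained from those of $G$. Recall that the operation removes, for each special large hexagon, the edge corresponding to the arrow ``multiplication by $z$'' from that hexagon's source to the adjacent hexagon's sink; equivalently, it merges that source vertex with that sink. The first step is therefore to give an explicit combinatorial description of the zig-zag paths of $G'$: away from the removed edges the zig-zag paths of $G$ and $G'$ coincide, and near a removed edge a zig-zag path of $G$ either passes through undisturbed (if it does not use the removed edge) or gets rerouted in a controlled way (if it does use it). Because the removed edges all sit in the ``$z$-direction'' of the honeycomb of large hexagons and are governed by the special McKay correspondence for $G \subset \GL_2(\bC)$, this rerouting is exactly the combinatorics encoded by the continued-fraction data $b_1,\dots,b_r$ and the special representations $i_0,\dots,i_r$ recalled in section~\ref{sc:continued-fraction}.

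Next I would organize the zig-zag paths of $G'$ into two families. The first family consists of paths lying entirely inside a single (new) large hexagon together with local detours; these have slopes that interpolate between the slopes of the two original boundary zig-zag paths $z$ and $w$, and their slopes are precisely the new edges created on the Newton polygon by chopping off the corner. Here I would use Wunram's description (Theorem~\ref{theorem:wunram}, Lemma~\ref{lemma:wunramvanishing}, and the dual sequence $j_0,\dots,j_{r+1}$) to read off these slopes and to check that each such new zig-zag path is homologically nontrivial, has no self-intersection, and meets each other zig-zag path at most once in each direction. The crucial input is that inside each large hexagon the dimer model is (a piece of) the $G$-Hilb dimer model for $G \subset \GL_2(\bC)$, whose zig-zag paths are completely understood; the operation only glues neighboring hexagons along a $z$-arrow, and such a gluing cannot create a cycle or a double intersection among the new short zig-zag paths because the special-representation condition (Theorem~\ref{th:Wunram-3}) controls exactly which arrows get inverted. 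The second family consists of the long zig-zag paths inherited from $z$, $w$, and all other zig-zag paths of $G$ whose slopes are not strictly between those of $z$ and $w$; for these I would argue that removing edges and merging vertices can only shorten or locally reroute them, and in particular cannot turn a homologically nontrivial path into a trivial one, cannot create a new self-intersection, and — using Lemma~\ref{lm:properly_ordered} and the adjacency of the slopes of $z$ and $w$ — cannot produce a new same-direction double intersection.

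The main obstacle, I expect, is the third condition: ruling out a pair of zig-zag paths in $G'$ that intersect twice in the same direction. A same-direction double intersection in $G'$ would, by the explicit reconstruction above, either already be present in $G$ (contradicting consistency of $G$) or be created by one of the edge removals. To exclude the latter I would argue by contradiction: suppose a removal produces such a pair; then pulling back through the merge, one finds in $G$ either a pair of weakly equivalent but inequivalent paths (contradicting Lemma~\ref{lm:consistency_implies_first_consistency}), or a configuration of zig-zag paths around a vertex violating the cyclic-order compatibility of Lemma~\ref{lm:properly_ordered}. The delicate point is that the removed $z$-arrows are attached to special large hexagons only, so after the merge the two zig-zag paths of $G$ that used to ``turn'' at the removed edge get concatenated into a single longer path; one must check this concatenation does not fold back on itself. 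This is where the combinatorial constraint in Lemma~\ref{lemma:wunramvanishing} (the condition that $d_s=b_s-1$ and $d_t=b_t-1$ force an intermediate $d_l\le b_l-3$) does the real work, since it is precisely the obstruction to two special hexagons being adjacent in a way that would cause such a fold.

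A clean way to package the whole argument is to prove it in the dual picture: view the operation, as the paper suggests, as ``adding inverses'' to the special $z$-arrows, so that $\bC\Gamma'$ is a localization-type modification of $\bC\Gamma$ along these arrows. Then weak equivalence is unchanged, and one only needs to check that strict equivalence in $\bC\Gamma'$ does not collapse more than it should — i.e., that the first consistency condition of Mozgovoy--Reineke (Definition~\ref{df:MR}) is preserved — and then invoke the equivalence (established in section~\ref{sc:consistency}, in the presence of a perfect matching, which $G'$ has by Proposition~\ref{prop:non-degenerate} applied after the fact, or directly by exhibiting one) between that condition and full consistency. I would carry out the steps in the order: (1) explicit description of $\Gamma'$ and its zig-zag paths; (2) new short zig-zag paths and their slopes via the continued-fraction/special-representation data; (3) preservation of ``no homologically trivial zig-zag path'' and ``no self-intersection''; (4) preservation of ``no same-direction double intersection'', reducing to Lemma~\ref{lm:consistency_implies_first_consistency} and Lemma~\ref{lm:properly_ordered}, with Lemma~\ref{lemma:wunramvanishing} as the combinatorial engine; (5) conclude.
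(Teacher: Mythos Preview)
Your proposal has the right overall shape --- track zig-zag paths through the operation, separate them into old survivors and newly created paths, and bring in the continued-fraction combinatorics of special representations --- but it contains a significant misconception and does not isolate the precise combinatorial lemma that actually does the work.

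The misconception: you write that ``inside each large hexagon the dimer model is (a piece of) the $G$-Hilb dimer model for $G \subset \GL_2(\bC)$''. This is false. The interiors of large hexagons contain arbitrary pieces of the original consistent dimer model; what is McKay-like is only the coarse honeycomb itself, whose hexagons are the vertices of the McKay quiver $\Lambda$ and whose boundary edges fall into the three families $x,y,z$. The paper exploits this by a clean two-step structure: Step~1 proves the proposition when the original dimer model \emph{is} the honeycomb $\Lambda$ for $\ghilb(\bC^3)$; Step~2 observes that in the general case the new zig-zag paths of $G'$ are precisely the new zig-zag paths of $\Lambda'$ read off at the level of large hexagons, while every old zig-zag path other than the chosen pair $z,w$ survives unchanged. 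Thus the ``new versus new'' analysis in Step~2 reduces directly to Step~1.

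The missing lemma: the combinatorial engine in Step~1 is not Lemma~\ref{lemma:wunramvanishing} directly but the sharper Lemma~\ref{lemma:no_special}: for $t\in[1,r+1]$, $a\in[0,i_{t-1}-i_t)$, $b\in[0,j_t-j_{t-1})$, the representation $i_{t-1}+a+bq$ is special iff $a=b=0$. In $\Lambda'$ each new zig-zag path (indexed by $t$) runs along an old $yz$-zig-zag path from the special hexagon $i_{t-1}$ to the next special hexagon $i_t$, then turns onto an old $xz$-path and, by \eqref{equation:special}, closes up without meeting any further removed edges. Two such new paths meeting more than once on the universal cover would force a special representation to appear at a nontrivial $(a,b)$, which Lemma~\ref{lemma:no_special} forbids. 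Your invocation of Lemma~\ref{lemma:wunramvanishing} and the remark about ``adjacent special hexagons causing a fold'' points in this direction (Lemma~\ref{lemma:no_special} is indeed proved via Lemma~\ref{lemma:wunramvanishing}) but does not formulate the needed statement.

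For the mixed case in Step~2 (one new path, one old survivor) your plan to pull back and invoke Lemma~\ref{lm:consistency_implies_first_consistency} or Lemma~\ref{lm:properly_ordered} is more roundabout than necessary. The paper's argument is a direct slope observation: the new zig-zag path is built from segments of the chosen pair $z,w$, and since their slopes are adjacent the survivor's slope cannot lie strictly between them; hence a same-direction double intersection of the survivor with a new path would already force a same-direction double intersection of the survivor with $z$ or $w$ in the original model, contradicting its consistency. Finally, your alternative route via the Mozgovoy--Reineke first consistency condition is not developed enough to stand on its own: showing that weak-equals-strict equivalence is preserved under localization at the special $z$-arrows is not obviously easier than the direct zig-zag analysis, and you would still need to exhibit a perfect matching on $G'$ before invoking the equivalence of the two consistency notions.
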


We need the following lemma
to prove Proposition \ref{prop:preservation-of-consistency}:

\begin{lemma} \label{lm:no_special}
Let $t \in [1, r+1]$, $a \in (0, i_{t-1}-i_t)$ and
$b \in (0, j_t-j_{t-1})$ be integers.
Then $i_t + a + bq$ is special if and only if $a=b=0$.
\end{lemma}

\begin{proof}
Write
$$
a = d_t i_t + d_{t+1} i_{t+1} + \dots + d_r i_r
$$
as in Theorem \ref{theorem:wunram}.
Using the same theorem for the dual sequence, we can write
$$
b = d_{t-1} j_{t-1} + d_{t-2} j_{t-2} + \dots + d_1j_1.
$$
Then Lemma \ref{lm:wunram_dual} implies
$$
i_t + a + bq \equiv d_1 i_1 + \dots + d_{t-1}i_{t-1} + (d_t+1)i_t +d_{t+1} i_{t+1} + \dots +d_r i_r.
$$
Therefore if the sequence
$
 (d_1, \dots, d_{t-1}, d_t +1, d_{t+1}, \dots, d_r)
$
satisfies the condition in Lemma \ref{lemma:wunramvanishing},
then $i_t + a + bq$ is special if and only if $d_1=\dots=d_r=0$ by the uniqueness
of the expression in Theorem \ref{theorem:wunram}.

By using
$
 b_t i_t = i_{t-1} + i_{t+1}
$
and the assumption
$
 a < i_{t-1}-i_t,
$
we obtain
$$
 (d_t +1 -b_t)i_t + (d_{t+1} +1)i_{t+1} + d_{t+2}i_{t+2} +
   \dots +d_r i_r < 0
$$
which implies $d_t \le b_t -2$.
Moreover,
if the equality $d_t = b_t -2$ holds, then we have
$$
d_{t+1} i_{t+1} + d_{t+2}i_{t+2}+ \dots +d_r i_r < i_t - i_{t+1},
$$
which is of the same form as the assumption $a < i_{t-1}-i_t$ with $t$ increased by $1$
so that we obtain $d_{t+1} \le b_{t+1}-2$.
Thus we can inductively show that if $d_k = b_k -1$ for some $k > t$,
then there is an integer $l \in (t, k)$
with $d_l \le b_l -3$.

We can argue in the same way to conclude:
$d_{t-1} \le b_{t-1} -2$ and
if $d_k = b_k -1$ for some $k < t-1$,
then there is an integer
$l \in (k, t-1)$ with $d_l \le b_l -3$.

Thus we have shown that the sequence
$
 (d_1, \dots, d_{t-1}, d_t+1, d_{t+1}, \dots, d_r)
$
satisfies the condition in Lemma \ref{lemma:wunramvanishing}.
\end{proof}

Now we prove Proposition \ref{prop:preservation-of-consistency}:

\begin{proof}[Proof of
Proposition \ref{prop:preservation-of-consistency}]

We first note that
if the zigzag paths of the bicolored graph
obtained by the operation satisfy the consistency condition,
then the assumption
implies that the bicolored graph satisfies the condition in Lemma \ref{lemma:operation-dimer}
and hence is actually a dimer model.
We prove the consistency conditions in two steps.

\setcounter{step}{0}
\begin{step}
The case
$
 \ahilb(\bC^3) \setminus \ahilb(\bC^2)
$
for $A=\langle\frac{1}{n}(1,q)\rangle \subset \GL_2(\bC)$.
\end{step}

Let $\Lambda$ be the hexagonal dimer model for $\ahilb(\bC^3)$.
The associated quiver is the McKay quiver of $A$
where the vertices are the irreducible representations of $A$ and
there are three arrows from each vertex corresponding to the multiplications by the coordinate
functions $x, y, z$.
Regard the set $V$ of vertices as
$V=(\bZ/n\bZ)^*=\bZ/n\bZ$
and let $\alpha_i, \beta_i, \gamma_i$ be the three arrows
with the source $i\in V$ whose targets are $i+1$, $i+q$, $i-q-1$ respectively.
A zigzag path of $\Lambda$ is of one of the following three forms according to its homology class:
$(\dots, \beta_{i+1}, \alpha_i, \beta_{i-q}, \alpha_{i-q-1},\dots)$,
$(\dots, \gamma_{i+q}, \beta_i, \gamma_{i+q+1}, \beta_{i+1},\dots)$ or
$(\dots, \alpha_{i-q-1}, \gamma_i, \alpha_{i-1}, \gamma_{i+q}, \dots)$.

Let $\Lambda'$ be the bicolored graph
obtained from $\Lambda$ by the operation in Section \ref{sc:algorithm}
(i.e., by removing the edges $\gamma_i$'s for special $i$'s).
Of three homology classes of zigzag paths on $\Lambda$,
only the ones consisting of $\alpha$'s and $\beta$'s
survive in $\Lambda'$.
Other two zigzag paths will be transformed into
new zigzag paths on $\Lambda'$,
indexed by $t$ with $t \in \bZ/r\bZ$ as follows:
Start with the edge $\beta_{i_{t-1}-q}$ whose target is the special hexagon $i_{t-1}$, 
next choose the adjacent edge $\gamma_{i_{t-1}+1}$ if $i_{t-1}+1$ is not special,
and
go along the old zigzag path consisting of $\beta$'s and $\gamma$'s
until one arrives at the next special hexagon $i_t$,
where one is blocked by the removed edge $\gamma_{i_t}$.
Then one changes the direction and
go along the old zigzag path consisting of $\alpha$'s and $\gamma$'s.
By virtue of \eqref{equation:special},
one comes back to the starting point
without meeting any other removed edges.

Now let us check the consistency of the new dimer model.
It is obvious that a new zigzag path has no self-intersection
on the universal cover.
Choose two zigzag paths on $\Lambda'$.
If they are both old, i.e., zigzag paths of $\Lambda$, then they do not meet at all.
If one is old and the other is new,
then they meet more than once in general
but always in the opposite direction.
If they are both new,
then they meet at most once on the universal cover,
since there are no special representations
in the rectangular region in \pref{fg:pres_cons}
by \pref{lm:no_special}.

\begin{figure}[ht]
\centering
\input{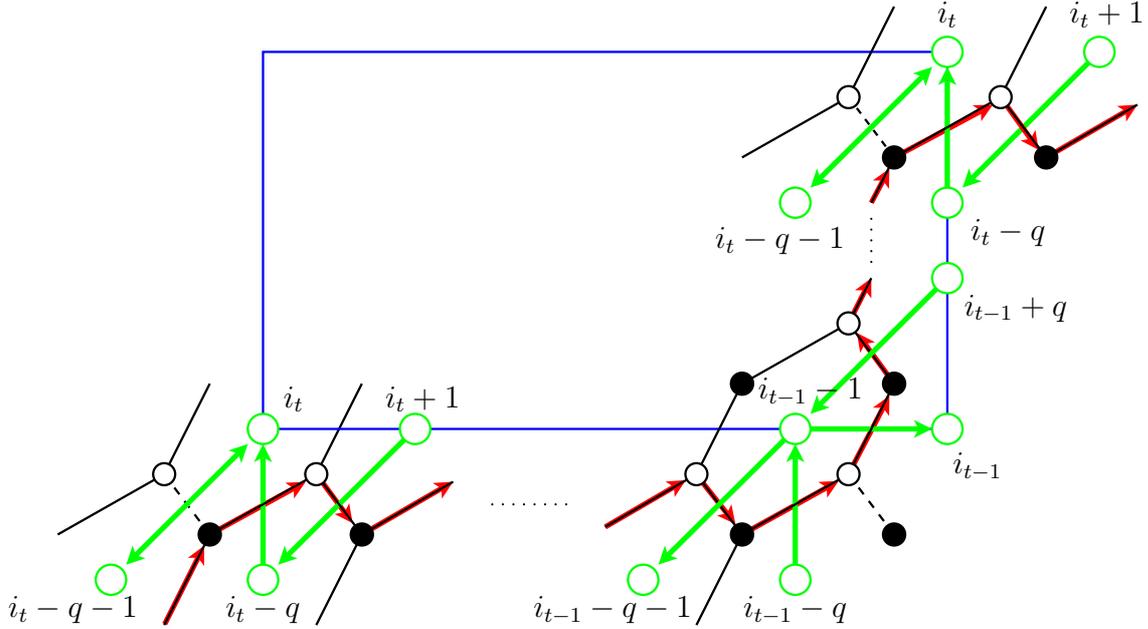}
\caption{New zigzag paths intersect at most once on the universal cover}
\label{fg:pres_cons}
\end{figure}


\begin{step}
The general case.
\end{step}
Old zigzag paths except the chosen two survive,
and new zigzag paths are described
in the same way as above using the large hexagons.
Let us analyze intersections of two zigzag paths
in the new dimer model.
If two zigzag paths are both old or new,
then the same reasoning as step 1 shows that
they do not intersect in the same direction twice.
Take one new zigzag path and one survivor from the old one,
and suppose they meet twice in the same direction.
Since we have chosen two zigzag paths with adjacent slopes
to perform the operation,
the slope of the survivor cannot be
in between the slopes of these two zigzag paths.
This implies that the survivor must meet
either of the two zigzag paths twice in the same direction,
thus contradicting the consistency of the old dimer model.
\end{proof}

\section{Zigzag paths and characteristic polygons}
 \label{sc:zigzag-polygon}

We use the relation
between zigzag paths and
the characteristic polygon
to show that the characteristic polygon
changes as expected
under the operation.

Let $([z_i])_{i=1}^k$ be the sequence of slopes of zigzag paths
ordered cyclically starting from any zigzag path.
Here $k$ is the number of zigzag paths, and
some of the slopes may coincide in general.
Define another sequence $(w_i)_{i=1}^r$ in $\bZ^2$ by
$w_0 = 0$ and 
$$
 w_{i+1} = w_i + [z_{i+1}]', \qquad i = 0, \dots, k-1
$$
where $[z_{i+1}]'$ is obtained from $[z_{i+1}]$ by
rotating by 90 degrees in the positive direction.
Note that one has
$
 w_r = 0
$
since every edge is contained in exactly two zigzag paths
whose directions on that edge is opposite,
and hence the homology classes of the zigzag paths add up to zero.
The convex hull of $(w_i)_{i=1}^r$ is called
the {\em zigzag polygon}.

The following theorem is proved by Gulotta
\cite[Theorem 3.3]{Gulotta}
for properly-ordered dimer models:

\begin{theorem}
 \label{th:gulotta}
For a consistent dimer model,
the characteristic polygon $\Delta$ coincides with the zigzag polygon
up to translation.
\end{theorem}

\begin{proof}
We already have Corollary \ref{cor:zigzag},
and it suffices to show that
the number of zigzag paths with a given slope
coincides with the number of the primitive side segments
on the corresponding side of $\Delta$.
Let $(z, w)$ be a pair of zigzag paths with adjacent slopes, and
$D = D_0$ be the corner perfect matching coming from $(z, w)$.
Let $\{ z=z_1, \dots, z_r \}$ be the set of zigzag paths with slope $[z]$.
Recall that $D$ contains half of the edges constituting $z$.
On the other hand,
if we construct a corner perfect matching by using $z_i$ and $w$,
then it must coincide with $D$ since $D$ is multiplicity-free. 
Thus $D$ contains half of the edges constituting $z_i$ for each $i$.
Let $D_1$ be the perfect matching such that the symmetric difference
$D_0 \vartriangle D_1 :=D_0 \cup D_1 \setminus (D_0 \cap D_1)$ is $z_1$.
Similarly,
let $D_i$ be the perfect matching with $D_{i-1} \vartriangle D_i = z_i$
for $i=2,\dots, r$. 
Then $D = D_0, D_1, \dots, D_r$ lie
on the side of $\Delta$ perpendicular to $[z]$.
Let $D'$ be the other corner perfect matching on this side of $\Delta$.
To see that the numbers coincide,
it suffices to show that $D' = D_r$.

The symmetric difference $D \vartriangle D'$,
equipped with the black-to-white flow on $D$ and
the white-to-black flow on $D'$,
gives a cycle on the torus $T$.
Recall from Section \ref{sc:pm_moduli}
that one can associate a representation of $\Gamma$
to a perfect matching.
If there is a homologically trivial cycle in $D \vartriangle D'$,
then the vertices surrounded by this cycle
constitute a sub-representation or a quotient representation
of the representation associated with $D$ or $D'$.
A cycle in the homology class $-[z]$ leads
to a sub-representation or a quotient representation
in a similar way.
On the other hand,
corner perfect matchings are simple
by Proposition \ref{prop:corner_perfect_matching}.
This shows that $D \vartriangle D'$ consists
of disjoint cycles in the class $[z]$.
It follows from the definition of the height change
that the number of cycles in $D \vartriangle D'$ coincides
with the number of primitive segments
on the side connecting $D$ and $D'$.
In particular, $D' \cap D_i \cap z_i$ is empty for each $i$.

Now we perform the operation in Section \ref{sc:algorithm}
for the pair $(z, w)$.
Then $D_1, \dots, D_r, D'$ survive as perfect matchings of the new
consistent dimer model with the new corner perfect matching $D_1$.
The zigzag paths of slope $[z]$ in the new dimer model are
$z_2, \dots, z_r$.
We can repeat the operation until $D_r$ becomes a corner perfect matching.
Then, there are no zigzag paths of slope $[z]$, which implies $D_r=D'$
by Corollary \ref{cor:zigzag}.
\end{proof}

Theorem \ref{th:gulotta} yields
the second part of Theorem \ref{th:removal}:

\begin{proposition} \label{prop:polygon}
The characteristic polygon of the dimer model
after the operation in Section \ref{sc:algorithm}
is obtained by removing the chosen corner
and taking the convex hull of the rest.
\end{proposition}

\begin{proof}
In the proof of Proposition \ref{prop:preservation-of-consistency},
we have described the change of  zigzag paths
under the operation.
Theorem \ref{th:gulotta} shows that this induces
the desired change in the characteristic polygon.
\end{proof}

Theorem \ref{th:gulotta} and
Lemma \ref{lm:zigzag-adjacency}
gives the following uniqueness result
in the case of lattice triangles:

\begin{proposition} \label{prop:triangle}
For any lattice triangle $\Delta$,
there is a unique consistent dimer model
whose characteristic polygon coincides with $\Delta$.
\end{proposition}

\begin{proof}
In the case of a triangle,
any pair of zigzag paths are adjacent
or have the same slope.
If they have the same slope,
then consistency condition prevents then
from intersecting at all.
If they are adjacent,
then they can intersect only once
on the universal cover
by Lemma \ref{lm:zigzag-adjacency}.
These two conditions suffice to show that
the resulting dimer model gives a hexagonal tiling
of the 2-torus,
and the corresponding quiver is the McKay quiver
for some abelian subgroup of $\SL_3(\bC)$.
\end{proof}

See also \cite[Theorem 1.2]{Ueda-Yamazaki_NBTMQ}
for a closely-related uniqueness result.
The uniqueness fails even for squares;
see e.g. \cite{Ueda-Yamazaki_BTP}
for a discussion of an example.

Another corollary is the following statement,
which is stronger than Proposition
\ref{prop:non-degenerate}:

\begin{corollary}\label{cor:corner}
Let $G$ be a consistent dimer model.
Then for any edge of $G$,
there is a corner perfect matching containing it.
\end{corollary}

\begin{proof}
For an edge $e$, choose a zigzag path $z$ containing $e$.
Then we can construct a corner perfect matching $D$ of $G$
which contains half of the edges of $z$ as in the proof of
Proposition \ref{prop:non-degenerate}.
If $D$ contains $e$, we are done.
If $D$ dones not contain $e$,
then the other corner perfect matching $D'$
in the proof of Theorem \ref{th:gulotta}
contains $e$.
\end{proof}

\section{Effect of the operation on the moduli space}
 \label{sc:polygon}

Suppose a dimer model $G'$ is obtained
from a consistent dimer model $G=(B, W, E)$ by 
the operation in Section \ref{sc:algorithm}.
Let $\Gamma$ and $\Gamma'$ be the quivers
associated with $G$ and $G'$ respectively, and
$S \subset E$ be the set of removed edges.
A vertex of $\Gamma'$ is the union of vertices of $\Gamma'$
connected by arrows in $S$.

Regarding quivers as categories,
we can define a functor $\phi : \Gamma \to \Gamma'$ as follows:
A vertex $v$ of $\Gamma$ is sent
to the vertex of $\Gamma'$ containing $v$.
An arrow $a$ is sent to itself if $a \notin S$,
and to the identity of the vertex containing $a$ if $a \in S$.
The functor $\phi$ induces the functor
$$
 \phi^* : \module \Gamma' \to \module \Gamma.
$$

For the stability parameter $\theta$ in Proposition \ref{prop:large_hexagon},
we define a stability parameter $\theta'$ for $\Gamma'$
such that $\theta'(v')$ is the sum of $\theta(v)$ for vertices $v \subset v'$.
Then the above functor gives an open embedding
$\scM'_{\theta'} \to \scM_{\theta}$.
In terms of the moduli spaces,
Proposition \ref{prop:polygon} is interpreted as follows:

\begin{proposition}\label{prop:complement}
The image of $\scM'_{\theta'}$ is the complement
of the toric divisor $D_\frakc \subset \scM_\theta$
corresponding to the removed corner $\frakc$.
\end{proposition}

As a corollary, we obtain Proposition \ref{prop:restriction}:
\begin{corollary}\label{cor:vanishing_only}
The edges in $S$
are exactly those which correspond
to morphisms between tautological bundles
vanishing only on the toric divisor
$D_\frakc \subset \scM_\theta$.
\end{corollary}
\begin{proof}
For an edge $e$, let $\Psi(e)$ denote the corresponding morphism
between tautological bundles on $\scM_\theta$.
First consider an edge $e \in S$.
By the construction of the corner perfect matching $D_\frakc$,
the morphism $\Psi(e)$ vanishes on $D_\frakc$.
On the other hand, Proposition \ref{prop:complement} shows
that $\Psi(e)$ does not vanish on any other toric divisor.
Next suppose $e \notin S$.
Then $e$ survives as an edge $e'$ in the consistent dimer model $G'$,
and $\Psi(e)$ restricts to a morphism $\Psi'(e')$
of tautological bundles on $\scM'_{\theta'}$.
By Corollary \ref{cor:corner},
there is a corner perfect matching $D'$ of $G'$ containing $e$.
Then the simplicity of $D'$ shows that
$\Psi'(e')$ vanishes along the divisor corresponding to $D'$.
\end{proof}

\section{Injectivity of the universal morphism}
 \label{sc:injectivity}

Let $G$ be a dimer model and
$\bigoplus_v \scL_v$ be the tautological bundle
on the moduli space $\scM_\theta$
of quiver representations
with respect to a generic stability parameter $\theta$.

\begin{proposition} \label{prop:inj} 
If $G$ is consistent,
then the universal morphism
$$
 \bC \Gamma \to \End \lb \bigoplus_v \scL_v \rb
$$
is injective.
\end{proposition}

\begin{proof}
A consistent dimer model is non-degenerate
by Proposition \ref{prop:non-degenerate}.
Therefore, the moduli space contains a three-dimensional
algebraic torus $\bT$ as an open set
by \cite[Proposition 5.1]{Ishii-Ueda_08}.
Fix a $\bT$-fixed point $[\Psi]$ on $\scM_\theta$
which is the isomorphism class of a representation $\Psi$ of $\Gamma$.
Then the toric affine open neighborhood $U_\Psi$ of $[\Psi]$ is isomorphic
to a closed subscheme of the space $\widetilde{\scM}$ of all the representations
of $\Gamma$ by \cite[Lemma 4.3]{Ishii-Ueda_08}.
Then $\bT \subset U_\Psi$ is lifted to a subgroup of the group $\widetilde{\bT}$
of $\bC^\times$-valued representations of $\Gamma$.
Thus $\bT$ acts on both $\bC\Gamma$ and $\End \lb \bigoplus_v \scL_v \rb$
in such a way that the homomorphims is equivariant.
If two paths $p$ and $q$ from $u$ to $v$ are not
equivalent, they are not weakly equivalent
by the first consistency condition,
and hence they have different weights
with respect to the $\bT$-action.
Since equivalence classes of paths form a basis of $\bC\Gamma$
and any path goes to a non-zero element in $\End \lb \bigoplus_v \scL_v \rb$,
the homomorphism is injective.
\end{proof}

\section{Preservation of the tilting condition:
$\ahilb(\bC^3)$ versus
$\ahilb(\bC^3) \setminus \ahilb(\bC^2)$}
\label{sc:tilting-G-Hilb}

Let $A$ be a finite small subgroup of $\GL_2(\bC)$
and set
$Y = \ahilb(\bC^2)$,
$U = \ahilb(\bC^3)$ and
$U' = U \setminus Y$.
Let $\scR_\rho$ be
the tautological bundle on
$U = \ahilb(\bC^3)$
corresponding to an irreducible representation $\rho$ of $A$,
and $\scR' _\rho= \scR_\rho|_{U'}$
be its restriction to $U'$.
In this section, we compare tilting conditions of $\bigoplus_\rho \scR_\rho$
and $\bigoplus_\rho \scR'_\rho$ and
prove two lemmas which will be used later in a more general setting.
%
%
We first prove a general result
that the restriction of a tilting object
to an open subset is also a generator:

\begin{lemma} \label{lm:restricting_generator}
Let $\scE$ be a tilting object in $D^b \coh U$.
Then the pull-back of $\scE$ by an open immersion $\iota : U' \to U$
is a generator in $D^b \coh U'$.
\end{lemma}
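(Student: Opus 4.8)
The plan is to reduce the claim to the standard fact that restriction to an open subscheme preserves generators, using the semiorthogonal/idempotent-completion structure of derived categories of coherent sheaves. First I would observe that $\iota : U^0 \hookrightarrow U$ is an open immersion of smooth quasi-projective varieties, so $\iota^* = \iota^{-1}$ is the exact restriction functor on complexes, and it admits the right adjoint $\iota_*$ (pushforward), which is fully faithful since $\iota$ is an open immersion. The essential image of $\iota^*$ is all of $D^b\coh U^0$ because every coherent sheaf on $U^0$ extends to a coherent sheaf on $U$ (Grothendieck's extension lemma for quasi-coherent sheaves, combined with the usual noetherian truncation to get a coherent extension), and restriction of such an extension recovers the original sheaf up to the boundary; passing to bounded complexes this shows $\iota^*$ is essentially surjective.

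Next I would carry out the main step: given any object $\scF \in D^b\coh U^0$ with $\Ext^k_{U^0}(\iota^*\scE, \scF) = 0$ for all $k \in \bZ$, I must show $\scF \cong 0$. Choose $\scG \in D^b\coh U$ with $\iota^*\scG \cong \scF$. By adjunction,
\[
 \Ext^k_{U^0}(\iota^*\scE, \iota^*\scG)
   \;\cong\; \Ext^k_U(\scE, \iota_* \iota^*\scG)
   \;=\; \Hom_{D^b\coh U}(\scE, \iota_*\iota^*\scG[k]).
\]
So the hypothesis says $\scE$ has no maps of any degree to $\iota_*\iota^*\scG = \iota_*\scF$. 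Since $\scE$ is a tilting object on $U$, in particular a generator, we would like to conclude $\iota_*\scF \cong 0$, hence $\scF = \iota^*\iota_*\scF \cong 0$. The only subtlety here is that $\iota_*\scF$ need not lie in $D^b\coh U$ — pushforward along a non-proper open immersion can fail to preserve boundedness or coherence — so I cannot directly invoke that $\scE$ generates $D^b\coh U$. I would handle this by instead testing against the compactly supported pushforward or, more simply, by a Čech/local argument: cover $U$ by affine opens, and note that $\Ext^\bullet_U(\scE, -)$ detecting zero can be upgraded to detecting zero on $D^{qc}(U)$ (quasi-coherent derived category) because $\scE$ is a perfect complex that generates $D^b\coh U$ and hence, by the Neeman–Bondal–Van den Bergh machinery, compactly generates $D^{qc}(U)$. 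Then $\iota_*\scF \in D^{qc}(U)$ with no maps from a compact generator forces $\iota_*\scF \cong 0$.

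The hard part will be the last point — bridging from "$\scE$ generates $D^b\coh U$" to "$\scE$ compactly generates $D_{qc}(U)$ so that $\iota_*\scF$ (which is only quasi-coherent) must vanish." I expect to invoke that on a smooth quasi-projective variety a tilting object is a classical generator of $\perfmat(U) = D^b\coh U$, and a classical generator of the perfect complexes is automatically a compact generator of $D_{qc}(U)$; this is standard (Bondal–Van den Bergh, or Neeman). Granting that, the vanishing of all $\Ext^k_U(\scE, \iota_*\scF)$ gives $\iota_*\scF = 0$ in $D_{qc}(U)$, and applying the exact functor $\iota^*$ and using $\iota^*\iota_* \cong \id$ on $D_{qc}(U^0)$ yields $\scF \cong 0$. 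This proves $\iota^*\scE$ is a generator of $D^b\coh U^0$; note that acyclicity is \emph{not} claimed in this lemma, only the generation property, which is exactly what is needed as an input to the subsequent argument for Proposition \ref{prop:tilting-G-Hilb}.
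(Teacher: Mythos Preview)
Your argument is correct, but the paper takes a more elementary route that avoids the unbounded derived category altogether. Rather than testing $\iota^*\scE$ against arbitrary $\scF$ via adjunction, the paper uses that a tilting object $\scE$ is a \emph{classical} generator of $D^b\coh U$: every object is a direct summand of something built from $\scE$ by finitely many cones and shifts. Given $\scF \in D^b\coh U^0$, one picks a coherent extension $\scFtilde$ on $U$ (which exists, as you also note), writes $\scFtilde$ as a summand of an iterated cone on copies of $\scE$, and then simply applies $\iota^*$, which preserves cones and summands. This shows $\scF$ lies in the thick subcategory generated by $\iota^*\scE$, hence $\iota^*\scE$ is a generator.

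The trade-off: the paper's argument needs the (non-obvious but standard) fact that a tilting object on a smooth quasi-projective variety is a classical generator---essentially a consequence of the derived equivalence it induces---but then stays entirely inside bounded coherent categories. Your approach is cleaner conceptually (pure adjunction) but you must pass through $D_{qc}$ and invoke Neeman/Bondal--Van den Bergh to upgrade ``generates $D^b\coh U$'' to ``compactly generates $D_{qc}(U)$''. Both are valid; the paper's is shorter and self-contained given the tilting framework already in play.
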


\begin{proof}
For any coherent sheaf $\scF$ on $U'$,
there is a coherent sheaf $\scFtilde$ on $U$
such that $\iota^* \scFtilde = \scF$,
see e.g. \cite[Exercise 5.15]{Hartshorne}.
Since $\scE$ is a tilting object,
$\scF$ is a direct summand of an object in $D^b \coh U$
obtained from $\scE$ by taking mapping cones.
Since derived restriction commutes with the operation of
taking mapping cones,
this shows that $\scF$ is obtained from $\iota^* \scE$
by taking direct summands and mapping cones.
This implies that $\iota^* \scE$ is a generator in $D^b \coh U'$.
\end{proof}

To compare tilting properties of $\bigoplus_{\rho} \scR_{\rho}$
and $\bigoplus_{\rho} \scR'_{\rho}$,
we use the exact sequence
\begin{equation}\label{eq:long_exact}
 \cdots \to H^{i}_Y(U, \scR_\rho^\vee \otimes \scR_\tau)
  \to H^i(U, \scR_\rho^\vee \otimes \scR_\tau)
  \to H^i(U', {\scR'_\rho}^\vee \otimes \scR'_\tau)
  \to \cdots.
\end{equation}
In this exact sequence, we have the following vanishing result.
\begin{lemma} \label{lm:vanishing_of_local_cohomology}
The local cohomology
$
 H^i_Y(U, \scR_{\rho}^{\vee} \otimes \scR_{\tau})
$
vanishes for $i \ge 2$. 
\end{lemma}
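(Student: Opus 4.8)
The plan is to reduce the vanishing of $H^i_Y(U, \scR_\rho^\vee \otimes \scR_\tau)$ for $i \ge 2$ to a local statement at the exceptional locus, and then to exploit the fact that $U = \ghilb(\bC^3)$ fibers (étale-locally near $Y$) over $\bA^1$ in the $z$-direction, with $Y = \ghilb(\bC^2)$ sitting as the fiber over the origin, so that $Y$ is a Cartier divisor in $U$. First I would recall the standard long exact sequence relating local cohomology along $Y$ to the cohomology of $U$ and of $U^0 = U \setminus Y$: for any coherent sheaf $\scF$ on $U$,
\begin{equation*}
 \cdots \to H^{i}(U, \scF) \to H^i(U^0, \scF|_{U^0}) \to H^{i+1}_Y(U, \scF) \to H^{i+1}(U, \scF) \to \cdots.
\end{equation*}
Applying this with $\scF = \scR_\rho^\vee \otimes \scR_\tau$, the vanishing $H^{i}_Y = 0$ for $i \ge 2$ will follow once we know that $H^{i-1}(U^0, \cdot) \to H^i_Y(U,\cdot)$ and the surrounding maps behave well; concretely it suffices to show $H^j(U, \scR_\rho^\vee \otimes \scR_\tau) = 0$ for $j \ge 1$ together with the surjectivity of $H^0(U) \to H^0(U^0)$, or alternatively to use the depth characterization of local cohomology directly.

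The cleaner route, which I would actually pursue, is the depth one: $H^i_Y(U, \scF) = 0$ for $i < \operatorname{depth}_Y \scF$, so it is enough to prove that $\scR_\rho^\vee \otimes \scR_\tau$ has depth at least $2$ along $Y$. Since $\scR_\rho$ and $\scR_\tau$ are locally free on $U$ (the tautological bundles on $\ghilb(\bC^3)$ are vector bundles, $U$ being smooth), $\scR_\rho^\vee \otimes \scR_\tau$ is locally free, hence its depth along any closed subset $Z$ equals $\operatorname{codim}(Z, U)$ by the Auslander--Buchsbaum / Cohen--Macaulayness of the regular local rings of $U$. Here $Y$ is the exceptional fiber of the Hilbert--Chow-type morphism $U = \ghilb(\bC^3) \to \bC^3/G$; because $G \subset \GL_2(\bC)$ acts trivially on the $z$-coordinate, $Y = \ghilb(\bC^2)$ is a surface inside the threefold $U$, so $\operatorname{codim}(Y, U) = 1$ — which would only give vanishing for $i < 1$, i.e. $i = 0$, not enough. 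So I must be more careful: the relevant closed set is not $Y$ itself but the \emph{exceptional locus} $E \subset Y$ of $\pi: Y \to \bC^2/G$, which has codimension $2$ in $Y$ and codimension $3$... no — again $E$ is a curve (union of $\bP^1$'s) in the surface $Y$, hence a curve in the threefold $U$, of codimension $2$ in $U$. On $U^0 = U \setminus Y$ the morphism to $\bC^3/G$ is already an isomorphism away from codimension $\ge 2$, so the sheaf $\scR_\rho^\vee \otimes \scR_\tau$ restricted to $U \setminus E$ has its higher cohomology controlled, and what genuinely needs proof is the vanishing of $H^i_E(U, \scR_\rho^\vee \otimes \scR_\tau)$ for $i \ge 2$, which by local-freeness and $\operatorname{codim}(E,U) = 2$ gives $H^i_E = 0$ for $i < 2$ but we want $\ge 2$ — so the local cohomology along $E$ is concentrated in degrees $\ge 2$, and we need it to vanish \emph{above} $2$ as well, i.e. exactly in degree $2$ only potentially nonzero, and we must kill degrees $\ge 3$.

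Thus the actual content is: $H^i_Y(U, \scR_\rho^\vee \otimes \scR_\tau)$ for $i \ge 2$ decomposes via the spectral sequence / excision into contributions supported along $E$, and $\operatorname{codim}(E, U) = 2$ forces $H^i_E(U, \scG) = 0$ for $i > \dim U - \dim E = 3 - 1 = 2$ only if $\scG$ is suitably nice near $E$; more precisely for a locally free sheaf $\scG$ on the smooth threefold $U$ one has $H^i_E(U, \scG) = 0$ for $i > \operatorname{codim}(E,U) + \dim E$? No. The correct bound: for $Z$ closed in a Cohen--Macaulay scheme $X$, $H^i_Z(X, \scG) = 0$ for $i > \dim X$ always, and for locally free $\scG$, $H^i_Z(X,\scG) = 0$ for $i < \operatorname{codim}(Z, X)$. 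So with $\operatorname{codim}(E, U) = 2$ and $\dim U = 3$, local cohomology along $E$ lives in degrees $2$ and $3$. The plan is to rule out degree $3$ by a \emph{flatness/fibration argument}: $U \to \bA^1_z$ is flat near $Y$ with central fiber $Y$, the sheaf $\scR_\rho$ on $U$ restricts on each fiber to the corresponding tautological bundle, and one can use the base-change / Čech computation to express $H^\bullet_Y(U, \scF)$ in terms of $H^\bullet$ of the $z$-line with coefficients in the pushforward, reducing the top-degree vanishing to $H^2(Y, \Mtilde_\rho^\vee \otimes \Mtilde_\tau \otimes (\text{something}))$-type statements on the surface, where Serre duality and the fact that the $\scR$'s are generated by global sections with vanishing $H^1$ of the relevant duals (Wunram/Esnault, Theorems~\ref{th:Wunram-1}--\ref{th:VdB}) give the needed vanishing. \textbf{The hard part} will be setting up this reduction to the surface $Y$ correctly — controlling how local cohomology along the divisor $Y \subset U$ interacts with the $z$-direction and identifying the restriction of $\scR_\rho^\vee \otimes \scR_\tau$ to $Y$ with a full sheaf on $\ghilb(\bC^2)$ whose higher cohomology is known to vanish — rather than the cohomology vanishing on $Y$ itself, which is already supplied by the results quoted earlier in the paper.
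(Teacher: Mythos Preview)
Your proposal does not reach a working argument. The long detour through depth and codimension is a dead end that you yourself recognize: $Y$ has codimension $1$ in $U$, so $\operatorname{depth}_Y$ of a locally free sheaf is $1$ and you only get $H^0_Y = 0$, nothing more. The subsequent attempt to replace $Y$ by the exceptional curve $E \subset Y$ is not justified --- the statement is about $H^i_Y$, and there is no excision step that trades $Y$ for $E$ here, since $Y \setminus E$ is not affine and the sheaf is not supported near $E$ alone. Your final ``plan'' (fiber over $\bA^1_z$, reduce to cohomology on $Y$, invoke Wunram/Esnault) is pointing in roughly the right direction but is missing the concrete mechanism, and in particular misses two essential ingredients: the need to handle all infinitesimal thickenings $nY$, and the role of crepantness.

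The paper's proof is short and direct, and proceeds as follows. Since $Y \subset U$ is a Cartier divisor, one has
\[
 H^i_Y(U,\scF) \cong \varinjlim_n \Ext^i_U(\scO_{nY},\scF),
\]
and resolving $\scO_{nY}$ by $[\scO_U(-nY)\to\scO_U]$ gives $\Ext^i_U(\scO_{nY},\scF) \cong H^{i-1}\bigl(\scF\otimes\scO_U(nY)|_{nY}\bigr)$. The key point you did not isolate is that crepantness of $U\to\bC^3/G$ identifies $\scO_U(nY)|_{nY}$ with the dualizing sheaf $\omega_{nY}$. For $n=1$ one is then left with $H^{i-1}(Y,\scR_\rho^\vee\otimes\scR_\tau\otimes\omega_Y)$ for $i\ge 2$, and this vanishes precisely because $\scR_\tau|_Y$ is globally generated and $H^1(\scR_\rho^\vee\otimes\omega_Y)=0$ --- the two defining properties of a full sheaf. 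For $n>1$ one inducts via the exact sequence $0\to\omega_{(n-1)Y}\to\omega_{nY}\to\omega_Y^{\otimes n}\to 0$, using that $\omega_Y$ is globally generated. None of the depth or codimension-of-$E$ considerations enter.
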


\begin{proof}
We use
$$
 H^i_Y(U, \scR_\rho^\vee \otimes \scR_\tau)
  \cong
   \varinjlim_{n}
    \Ext^i_{U}(\scO_{n Y}, \scR_\rho^\vee \otimes \scR_\tau)
$$
to compute the local cohomology.
One has
\begin{equation}\label{eq:ext^i}
\begin{aligned}
 \Ext^i_{U}(\scO_{n Y}, \scR_\rho^\vee \otimes \scR_\tau)
  &\cong
 \Ext^i_{U}(\{ \scO_U(- n Y) \to \scO_U \},
    \scR_\rho^\vee \otimes \scR_\tau) \\
  &\cong
 H^i(\{ \scO_U \to \scO_U(nY) \}
    \otimes \scR_\rho^\vee \otimes \scR_\tau) \\
  &\cong
 H^{i-1}( \scO_U(n Y)|_{n Y}
    \otimes \scR_\rho^\vee \otimes \scR_\tau).
\end{aligned}
\end{equation}
Since $U$ has the trivial canonical bundle,
the adjunction formula gives an isomorphism
$$
 \scO_U(nY)|_{nY} \cong \omega_{n Y}
$$
with the dualizing sheaf $\omega_{nY}$ of $n Y$.
Since $Y$ is a resolution of an affine surface,
one has
$
 H^2(\scE) = 0
$
for any coherent sheaf $\scE$ on $Y$.
It follows that any surjection
$\scF \to \scG \to 0$
of coherent sheaves on $Y$ induces
a surjection
$
 H^1(\scF) \to H^1(\scG) \to 0
$
of cohomology groups.
By definition of full sheaves,
one has
$
 H^1(\scR_{\rho}^{\vee} \otimes \omega_Y) = 0
$
and $\scR_{\tau}|_Y$ is generated by global sections.
The latter shows the existence of a surjection
$
 \scO_U^{\oplus N} \to \scR_{\tau}|_Y \to 0
$
for some $N \in \bN$,
which gives a surjection
$$
 \scR_\rho^\vee \otimes \omega_Y^{\oplus N}
  \to \scR_\rho^\vee \otimes \scR_\tau \otimes \omega_Y
  \to 0,
$$
which combined with
$
 H^1(\scR_{\rho}^{\vee} \otimes \omega_Y) = 0
$
gives
$$
 H^1(\scR_\rho^\vee \otimes \scR_\tau \otimes \omega_Y)
  = 0.
$$
This proves
$$
 \Ext^i_{U}(\scO_{n Y}, \scR_\rho^\vee \otimes \scR_\tau) = 0
$$
for $n=1$.

For $n>1$,
note the exact sequence
$$
 0 \to \scO_U(D-Y)|_{(n-1) Y}
  \to \scO_U(D)|_{n Y}
  \to \scO(D)|_Y
  \to 0
$$
which holds for any divisor $D$ on $Y$.
By substituting $D = n Y$,
one obtains
$$
 0 \to \scO_U((n-1)Y)|_{(n-1) Y}
  \to \scO_U(n Y)|_{n Y}
  \to \scO(n Y)|_Y
  \to 0,
$$
which is the same as
$$
 0
  \to \omega_{(n-1)Y}
  \to \omega_{nY}
  \to \omega_Y^{\otimes n}
  \to 0.
$$
Since $Y$ is the minimal resolution,
$\omega_Y^{\otimes n}$ is
generated by global sections and
one has
$$
 H^1(\scR_\rho^\vee \otimes \scR_\tau
  \otimes \omega_Y^{\otimes n}) = 0.
$$
by the same argument as above.
Together with the exact sequence
$$
 H^1(\scR_\rho^\vee \otimes \scR_\tau
  \otimes \omega_{(n-1) Y})
  \to
  H^1(\scR_\rho^\vee \otimes \scR_\tau \otimes
 \omega_{n Y})
  \to
H^1(\scR_\rho^\vee \otimes \scR_\tau
  \otimes \omega_Y^{\otimes n}),
$$
one can inductively show
$$
 H^1(\scR_\rho^\vee \otimes \scR_\tau
  \otimes \omega_Y^{\otimes n}) = 0
$$
for any positive integer $n$.
\end{proof}

We obtain the following corollary which we will not use.
Note that its assumption follows from \cite{Bridgeland-King-Reid}.

\begin{corollary} 
If the condition \textup{(\bfT)} holds for $U$,
then the direct sum $\bigoplus_{\rho} \scR'_{\rho}$
over the set of irreducible representations of $A$
is a tilting object.
\end{corollary}
\begin{proof}
The restriction $\bigoplus_{\rho} \scR'_{\rho}$ is a generator
by Lemma \ref{lm:restricting_generator}.
The vanishing of
$
 H^i({\scR'_\rho}^\vee \otimes \scR'_\tau)
$
for $i\ge 1$ follows from the long exact sequence \eqref{eq:long_exact}
and Lemma \ref{lm:vanishing_of_local_cohomology}.
\end{proof}

\section{Preservation of surjectivity:
$\ahilb(\bC^3)$ versus \\
$\ahilb(\bC^3) \setminus \ahilb(\bC^2)$}
 \label{sc:surj-G-Hilb}

We use the same notation
as in Section \ref{sc:tilting-G-Hilb}.
Let $\Lambda$ be the McKay quiver of $A$,
and $\Lambda'$ be the quiver
obtained from $\Lambda$ by adding inverse arrows
to the arrows starting from special representations
corresponding to ``multiplication by $z$''.

We prove the following in this section:

\begin{proposition} \label{prop:surjective}
The natural map from $\bC \Lambda'$
to the endomorphism algebra
of $\bigoplus_i \scR_i'$ is surjective.
\end{proposition}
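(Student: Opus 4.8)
\textbf{Proof proposal for Proposition \ref{prop:surjective}.}

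The plan is to identify explicitly the coordinate ring over which everything lives and then match path-algebra generators of $\bC\Lambda'$ with their images in $\End(\bigoplus_i \scL_i)$. Since $G$ is abelian, write $G \subset \GL_2(\bC) \subset \SL_3(\bC)$ acting on $\bC^3 = \Spec \bC[x,y,z]$ with the $z$-coordinate paired with the determinant character $\omega = \det(\rhonat^\vee)$, as in section \ref{sc:tilting-G-Hilb}. On $U = \ghilb(\bC^3)$ the tautological bundles $\scR_d$ and the three ``multiplication'' maps $x\colon \scR_d \to \scR_{d\otimes \chi_x}$, $y\colon \scR_d \to \scR_{d\otimes\chi_y}$, $z_d\colon \scR_d \to \scR_{d\otimes\omega}$ generate, as Reid--Craw--Ishii type results give, the algebra $\End(\bigoplus_d \scR_d) \cong \bC\Lambda / (\text{relations})$. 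First I would recall (from \cite{craw2005} or directly) that over $U$ the natural map $\bC\Lambda \to \End(\bigoplus_d \scR_d)$ is surjective; restricting to the open set $U^0$ gives that the images of $x$, $y$, $z_d$ generate $\End(\bigoplus_d \scL_d)$ \emph{as soon as} one adjoins whatever extra endomorphisms appear because of the isomorphisms $\scL_\rho \cong \scL_{\rho\otimes\omega}$ for $\rho$ special.

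The key point is the Corollary just proved: $z_\rho\colon \scL_\rho \to \scL_{\rho\otimes\omega}$ is an isomorphism precisely when $\rho$ is special. Thus on $U^0$ the map $z_\rho$ already has an inverse as a morphism of sheaves, and the quiver $\Lambda'$ is built from $\Lambda$ by formally adjoining exactly these inverses. So the plan is: (1) restrict the surjection $\bC\Lambda \twoheadrightarrow \End(\bigoplus_d \scR_d)$ to $U^0$ to get that the subalgebra of $\End(\bigoplus_d \scL_d)$ generated by the images of all arrows of $\Lambda$ contains all maps induced by monomials in $x,y,z$; (2) observe that under the localization $\iota^*$ the image of the arrow $z_\rho$ (for $\rho$ special) becomes invertible, so the ring generated by $\iota^*(\text{arrows of }\Lambda)$ together with the inverses $z_\rho^{-1}$ is the image of $\bC\Lambda'$; (3) show this already exhausts $\End(\bigoplus_d \scL_d)$. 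For (3) the argument is that any $\varphi \in \Hom(\scL_d, \scL_e)$ is, by the projective pushforward to $\bC^3/G \setminus \{0\}$, a $G$-equivariant map of free modules over $\bC[x,y,z,z^{-1}]^{G}$ localized appropriately — concretely $\Hom(\scL_d,\scL_e)$ is a module over $\Gamma(U^0,\scO) = \Gamma(U,\scO)[\,?\,]$, and since $U^0 = U\setminus Y$ is the complement of the divisor $Y$ and $Y$ is cut out (up to compact divisors) by the sections $\wedge^r z_\rho$, localizing $\End(\bigoplus_d \scR_d)$ at these sections yields $\End(\bigoplus_d\scL_d)$. That localization is exactly inverting the $z_\rho$ for $\rho$ special (Proposition \ref{prop:Zrho} says $Z_\rho = rY$ iff $\rho$ special), so the localized algebra is generated by arrows of $\Lambda$ and the new inverse arrows, i.e.\ is the image of $\bC\Lambda'$.

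I expect the main obstacle to be step (3), namely controlling $\End(\bigoplus_d \scL_d)$ precisely enough to see it is the localization of $\End(\bigoplus_d\scR_d)$ at the multiplicative set generated by the $z_\rho$ with $\rho$ special, rather than some larger overring. The delicate issue is that $U^0$ is obtained from $U$ by deleting $Y$, but $Z_\rho = rY + Z'$ with $Z'$ possibly nonempty compact divisors when $\rho$ is non-special; one must check that inverting $z_\rho$ only for \emph{special} $\rho$ already inverts a section whose zero locus is exactly $rY$ (which Proposition \ref{prop:Zrho}(2) guarantees), and that this single localization makes $U^0$ affine over the base — or at least that the restriction map on global Hom's becomes surjective. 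Concretely, I would fix a special $\rho_0$ (which exists, and for which some power $\rho_0\otimes\omega^l$ is non-special by the last Lemma of section \ref{sc:tilting-G-Hilb}) and use $\wedge^{r} z_{\rho_0}$, viewed inside $\End(\bigoplus_d\scR_d)$, as the element to invert; then $\Gamma(U^0,\scO) = \Gamma(U,\scO)_{s}$ for this section $s$, and the restriction $\Hom(\scR_d,\scR_e) \to \Hom(\scL_d,\scL_e)$ becomes the localization map $\Hom(\scR_d,\scR_e) \to \Hom(\scR_d,\scR_e)_s$, which is obviously surjective. Since $s^{-1}$ lies in the image of $\bC\Lambda'$ (it is a product of the adjoined inverse arrows along a cycle through special vertices, using \eqref{equation:special} to return to the start), this finishes the surjectivity. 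The remaining bookkeeping — that the path in $\Lambda'$ realizing $s^{-1}$ closes up — is routine given the description of the new zig-zag paths in the proof of Proposition \ref{prop:preservation-of-consistency}.
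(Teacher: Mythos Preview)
Your strategy has a genuine gap at step (3). The element $s=\wedge^r z_{\rho_0}$ you propose to invert is a section of the nontrivial line bundle $\scR_{\rho_0}^{\vee}\otimes\scR_{\rho_0\otimes\omega}$, not a function on $U$, so the formula $\Gamma(U^0,\scO)=\Gamma(U,\scO)_s$ has no meaning as written; and viewed inside $\End(\bigoplus_d\scR_d)$ it sits in an off-diagonal block $\Hom(\scR_{\rho_0},\scR_{\rho_0\otimes\omega})$, so it is far from central and ``$\Hom(\scR_d,\scR_e)_s$'' does not type-check for general $d,e$. Your fallback suggestion of realizing $s$ as a \emph{loop} built by composing the special $z_\rho$'s along a cycle also fails: the targets $\rho\otimes\omega$ of the special $z_\rho$ are in general \emph{not} special (indeed the last Lemma of \S\ref{sc:tilting-G-Hilb} says some power $\rho\otimes\omega^l$ is non-special), so these arrows do not concatenate into a closed path through special vertices. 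What is true is that $H^0(U^0,\scR_d^{\vee}\otimes\scR_e)=\varinjlim_n H^0(U,\scR_d^{\vee}\otimes\scR_e\otimes\scO(nY))$ and $\scO(Y)\cong\scR_\omega$, but $\scR_e\otimes\scR_\omega^{\otimes n}$ is not a tautological bundle, so one cannot read off paths from this description without further work.

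The paper takes a completely different, explicit route. It writes $\Hom(\scL_f,\scL_g)$ as the span of Laurent monomials $x^ay^bz^c$ satisfying a system of toric inequalities governed by the continued-fraction data $(i_t),(j_t),(b_t)$. The heart of the argument is Proposition~\ref{proposition:inequality}: whenever $c<0$, one can find a \emph{specific} special representation $i_s$ and exponents $0\le a'\le a$, $0\le b'\le b$ such that $x^{a'}y^{b'}z^c$ already defines a holomorphic section of $\scL_f^{\vee}\otimes\scL_{i_s}$. This lets one factor the original map as (path in $\Lambda'$ using one $z^{-1}$ through $i_s$) followed by a map whose $z$-exponent is $c+1$, and an induction on $-c$ finishes. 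The choice of which special $i_s$ to route through genuinely depends on $f,g,a,b,c$ via a delicate convexity analysis of the sequence $(e_t)$ solving \eqref{equation:system}; there is no single element one inverts uniformly.
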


Let $\Ntilde = \bZ^3$ be the group of one-parameter subgroups
of the dense torus in $\bC^3$.
The group $N \supset \Ntilde$
of one-parameter subgroups
of the dense torus in $U = \ahilb(\bC^3)$ is given by
$$
 N = \bZ^3 + \bZ\cdot\frac{1}{n}(1, q, n-(1+q)),
$$
and
the fan describing the quotient $\bC^3/A$
has the unique 3-dimensional cone
given by the first quadrant
$
 (\bR_{\ge 0})^3
  \subset \Ntilde_\bR = N_\bR
$.

\begin{lemma}[Craw and Reid \cite{Craw-Reid}]
One-dimensional cones in the fan
describing $U = \ahilb(\bC^3)$
which are adjacent to $\bR_{\ge 0} (0,0,1)$
are generated by
$$
 \frac{1}{n}(j_t, i_t, n-(i_t+j_t)) \in N
$$
for $0 \le t \le r+1$.
Here we say two one-dimensional cones are adjacent if
they are contained in a common two-dimensional cone.
\end{lemma}


Now let us express the tautological bundles
as $\bQ$-linear combinations of exceptional divisors,
i.e., toric divisors except the three which correspond
to the corners of the junior simplex.
Let $x, y, z \in \bC[\Mtilde]$
be the coordinates of
$
 \bC^3 = \Spec \bC[x, y, z]
$
corresponding to the standard basis
of $\Mtilde = \Hom(\Ntilde, \bZ) \cong \bZ^3$.
Then rational sections of $\scR_d$ form a vector space with a
basis consisting of Laurent monomials
$x^ay^bz^c$ with $a+bq-(1+q)c \equiv d \mod n$.
On the other hand,
the coordinate ring of the dense torus in $\bC^3/A$
is given by
$
 \bC[x^{\pm 1}, y^{\pm 1}, z^{\pm 1}]^A
  = \bC[M],
$
where
\begin{align*}
 M &= \Hom(N, \bZ) \\
  &= \lc (a, b, c) \in \Mtilde \relmid a + b q - (1 + q) c \equiv 0 \mod n \rc.
\end{align*}
It follows that one can embed the line bundle $\scR_d^{\otimes n}$
into $\scO_{U}$ in a natural way
and it defines an effective exceptional divisor
$E_d$ on $U$ with $\scR_d^{\otimes n} = \scO_{U}(-E_d)$.

Let $C=(c_{st})_{s, t=1}^r$ be the negative of the intersection matrix of the
resolution $Y \to \bC^2 / A$;
$$
 c_{st}
  = \begin{cases}
      b_s & s = t, \\
      -1  & |s-t| = 1, \\
       0  & \text{otherwise}.
\end{cases}
$$
The lower-right principal minors
$$
 i_t =
\begin{vmatrix}
 b_{t+1} & -1 \\
 -1 & b_{t+2} & -1 \\
  & -1 & \ddots & \ddots \\
  & & \ddots & b_{r-1} & -1 \\
  & & & -1 & b_r
\end{vmatrix}
$$
give the integers
appearing in the continued fraction expansion
in Section \ref{sc:continued-fraction},
since they satisfy \eqref{eq:cont-frac}.
In particular,
one has $\det C = i_0 = n$.
Let $\eta_{st}$ be the $(s, t)$-th entry of the integer matrix $n C^{-1}$.
Since
\begin{align*}
\begin{pmatrix}
  b_1 & -1 \\
 -1 & b_2 & -1 \\
  & -1 & \ddots & \ddots \\
  & & \ddots & \ddots & -1 \\
  & & & -1 & b_r
\end{pmatrix}
\begin{pmatrix}
 i_1 \\
 i_2 \\
 i_3 \\
 \vdots \\
 i_r
\end{pmatrix}
 =
\begin{pmatrix}
 b_1 i_1 - i_2 \\
 - i_1 + b_2 i_2 - i_3 \\
 - i_2 + b_3 i_3 - i_4 \\
 \vdots \\
 - i_{r-1} + b_r i_r
\end{pmatrix}
 =
\begin{pmatrix}
 i_0 \\
 0 \\
 0 \\
 \vdots \\
 0
\end{pmatrix}
 =
\begin{pmatrix}
 n \\
 0 \\
 0 \\
 \vdots \\
 0
\end{pmatrix}
\end{align*}
and
\begin{align*}
\begin{pmatrix}
  b_1 & -1 \\
 -1 & b_2 & -1 \\
  & \ddots & \ddots & \ddots \\
  & & -1 & b_{r-1} & -1 \\
  & & & -1 & b_r
\end{pmatrix}
\begin{pmatrix}
 j_1 \\
 j_2 \\
 \vdots \\
 j_{r-1} \\
 j_r
\end{pmatrix}
 =
\begin{pmatrix}
 b_1 j_1 - j_2 \\
 - j_1 + b_2 j_2 - j_3 \\
 \vdots \\
 - j_{r-2} + b_{r-1} j_{r-1} - j_r \\
 - j_{r-1} + b_r j_r
\end{pmatrix}
 =
\begin{pmatrix}
 0 \\
 0 \\
 \vdots \\
 0 \\
 j_{r+1}
\end{pmatrix}
 =
\begin{pmatrix}
 0 \\
 0 \\
 \vdots \\
 0 \\
 n
\end{pmatrix},
\end{align*}
one has
\begin{equation}\label{equation:1retsu}
i_t = \eta_{t1} \text{ and }j_t = \eta_{tr}
\end{equation}
for $1 \le t \le r$.

Let $D_t$ be the divisor on $U$ corresponding to the ray
$\bR_{\ge 0}(j_t, i_t, n-(i_t+j_t))$ in $N_{\bR}$.
Since a line bundle on $Y$ is determined
by the degrees of the restrictions to the exceptional curves,
the fact that
\begin{align*}
 \deg \scO(-E_{i_s})|_{Y\cap D_t}
  &= \deg \scR_{i_s}^{\otimes n}|_{Y \cap D_t} \\
  &= n \deg \scR_{i_s}|_{Y \cap D_t} \\
  &= n \deg \scM_{i_s}|_{C_t} \\
  &= n \delta_{st}
\end{align*}
implies the following:

\begin{lemma}
We can write
$$
 E_{i_s} = \sum_{t=1}^r \eta_{st}D_t
  + (\text{sum of other exceptional divisors}).
$$
Therefore, for an integer $d=\sum_t d_t i_t$
as in Theorem \ref{theorem:wunram},
the coefficient of $D_t$ in $E_d$ is
$\sum_s d_s \eta_{st}$.
\end{lemma}

For integers $f, g \in [0, n-1]$, 
the space of rational sections of $\scR_f^{\vee} \otimes \scR_g$ has
$$
 \lc x^ay^bz^c \relmid a + bq -c(1+q) \equiv g-f \mod n \rc
$$
as a basis.
Write $f=\sum_t f_t i_t$ and $g=\sum_t g_t i_t$
as in Theorem \ref{theorem:wunram}.

\begin{corollary} \label{cr:zero}
For integers $a, b, c$ with $a + bq -c(1+q) \equiv g-f \mod n$,
the order of zero of the rational section $x^ay^bz^c$
of $\scR_f^{\vee} \otimes \scR_g$
along $D_t$ is given by the integer
\begin{align} \label{eq:et}
 e_t
  := \frac{1}{n} \lb a j_t + b i_t + c(n-(i_t+j_t))
  - \sum_{s=1}^r (g_s - f_s) \eta_{st} \rb.
\end{align}
\end{corollary}

Indeed,
the order of zero of $x^a y^b z^c$ along $D_t$
as a section of $\scO_U$ is given by
$
 a j_t + b i_t + c(n-(i_t + j_t)),
$
and the difference between the order of zero
as a section of $\scO_U(-E_d) \cong \scR_d^{\otimes n}$
and that of $\scO_U$ is given by
$\sum_s (g_s - f_s) \eta_{st}$.
It follows from \pref{cr:zero} that 
a rational section $x^a y^b z^c$
of $\scR_f^\vee \otimes \scR_g$
is holomorphic on $U'$
only if 
\begin{equation} \label{eq:inequality}
 a \ge 0, \quad 
 b \ge 0, \quad \text{and} \quad
 e_t \ge 0 \quad (1 \le t \le r).
\end{equation}
By substituting $t = 1$ in \eqref{eq:et},
one obtains
\begin{align*}
 e_1
  &= \frac{1}{n} \lb a j_1 + b i_1 + c(n-(i_1+j_1))
  - \sum_{s=1}^r (g_s - f_s) \eta_{s1} \rb \\
  &= \frac{1}{n} \lb a + b q + c(n-(q+1))
  - \sum_{s=1}^r (g_s - f_s) i_s \rb \\
  &= \frac{1}{n} \lb a+bq+c(n-1-q)-(g-f) \rb,
\end{align*}
and the condition
$a+bq-c(1+q) \equiv g-f \mod n$ is satisfied
if $e_1$ is an integer.

By multiplying the matrix $C$ to \eqref{eq:et},
one obtains
\begin{align*}
 \sum_{t=1}^r c_{st} e_t
  &= \frac{1}{n} \sum_{t=1}^r c_{st} \lb a j_t + b i_t + c(n-(i_t+j_t))
  - \sum_{u=1}^r (g_u - f_u) \eta_{ut} \rb \\
  &= \frac{1}{n} \sum_{t=1}^r c_{st} \lb a \eta_{tr} + b \eta_{t1}
   + c(n-(\eta_{t1}+\eta_{tr}))
  - \sum_{u=1}^r (g_u - f_u) \eta_{ut} \rb \\
  &= a \delta_{sr} + b \delta_{s1}
   + c \lb \sum_{t=1}^r c_{st} - \delta_{s1}-\delta_{sr} \rb
   - (g_s - f_s),
\end{align*}
which gives
\begin{equation}\label{equation:system}
\left\{
\begin{aligned}
b - b_1 e_1 + e_2 &= g_1 - f_1 - (b_1-2)c, \\
e_{t-1} - b_t e_t + e_{t+1} &= g_t - f_t -(b_t-2)c, & 2 \le t \le r-1, \\
e_{r-1} - b_r e_r + a &= g_r - f_r - (b_r-2)c.
\end{aligned}
\right.
\end{equation}
If $x^ay^bz^c$ is a holomorphic section
of $\scR_f^{\vee} \otimes \scR_g$ on $U'$,
then the solution $(e_t) \in \bZ^r$ to \eqref{equation:system}
must satisfy \eqref{eq:inequality}.
Putting $e_0:=b$ and $e_{r+1}:=a$,
we consider the second difference
\begin{equation*}
e_t'':=e_{t-1} - 2 e_t + e_{t+1}
\end{equation*}
for $1 \le t \le r$.
Then \eqref{equation:system} can be written as
\begin{equation}\label{equation:difference}
e_t''= g_t-f_t +(b_t-2)(e_t-c) \qquad (1 \le t \le r).
\end{equation}
If $e_t'' \ge 0$ for all $t$, then the function $t \mapsto e_t$ is convex.
This is not true in general but the situation is very close as we will see now.
To estimate $e_t''$ from below, we use the following lemma:

\begin{lemma}\label{lemma:integers}
Let $e \ge 0$, $b_t \ge 2$, $f_t \le b_t-1$ and $c<0$ be integers. Then
\begin{enumerate}
\item $-f_t +(b_t-2)(e-c) \ge -1$. 
\item If $-f_t +(b_t-2)(e-c) = -1$, then $f_t=b_t-1$.
\item If $-f_t +(b_t-2)(e-c) = 0$, then $f_t \ge b_t -2$.
\end{enumerate}
\end{lemma}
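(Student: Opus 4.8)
\textbf{Proof plan for Lemma \ref{lemma:integers}.}

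The plan is to treat the two statements separately, both by elementary integer estimates using the standing hypotheses $e \ge 0$, $b_t \ge 2$, $f_t \le b_t - 1$, and $c < 0$, which forces $c \le -1$ and hence $e - c \ge e + 1 \ge 1$.

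For part (1), suppose $-f_t + (b_t - 2)(e - c) < 0$, i.e.\ $(b_t - 2)(e - c) < f_t$. Since $b_t - 2 \ge 0$ and $e - c \ge 1$, the left-hand side is a non-negative multiple of $b_t - 2$; together with $f_t \le b_t - 1$ this squeezes things tightly. The cleanest route: if $b_t - 2 \ge 1$ then $(b_t - 2)(e - c) \ge b_t - 2$, so the inequality $(b_t-2)(e-c) < f_t \le b_t - 1$ leaves exactly $(b_t-2)(e-c) = b_t - 2$ (forcing $e - c = 1$) and $f_t = b_t - 1$; substituting back gives $-f_t + (b_t-2)(e-c) = -(b_t-1) + (b_t - 2) = -1$. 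If $b_t - 2 = 0$ the hypothesis reads $0 < -f_t \le 0$, a contradiction, so this case does not occur. Either way the conclusion $-f_t + (b_t-2)(e-c) = -1$ and $f_t = b_t - 1$ holds.

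For part (2), suppose $-f_t + (b_t-2)(e-c) = 0$, i.e.\ $f_t = (b_t - 2)(e - c)$. Since $e - c \ge 1$ and $b_t - 2 \ge 0$, we get $f_t \ge b_t - 2$ directly when $b_t - 2 \ge 1$; and when $b_t - 2 = 0$ the equation gives $f_t = 0 = b_t - 2$, so $f_t \ge b_t - 2$ again. This finishes the lemma.

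I do not expect any real obstacle here: the statement is precisely the kind of bookkeeping inequality whose proof is a short case split on whether $b_t = 2$ or $b_t \ge 3$, combined with $e - c \ge 1$. The only point requiring a moment's care is remembering that $c < 0$ is an integer, so $-c \ge 1$, which is what makes the factor $e - c$ bounded below by $1$ rather than merely by $0$; without that the conclusions would fail.
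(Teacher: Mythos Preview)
Your approach is the natural one, and the paper itself offers no proof (it merely labels the lemma ``easy''), so there is nothing to compare against. The argument for part~(2) and for the case $b_t \ge 3$ in part~(1) is correct.

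However, your treatment of the subcase $b_t = 2$ in part~(1) contains a slip. When $b_t - 2 = 0$, the hypothesis $-f_t + (b_t - 2)(e - c) < 0$ reads $-f_t < 0$, i.e.\ $f_t > 0$, not ``$0 < -f_t \le 0$''. So this case is not vacuous. Combined with $f_t \le b_t - 1 = 1$ you get $f_t = 1 = b_t - 1$, and then $-f_t + (b_t - 2)(e - c) = -1 + 0 = -1$, so the desired conclusion holds here as well. With this correction your proof is complete.
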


We omit the proof, which is elementary and straightforward.
Since $(f_1, \dots, f_r)$ satisfies the condition
in Lemma \ref{lemma:wunramvanishing},
this implies the following:

\begin{corollary}\label{corollary:difference}
Suppose $(e_t)_{t=0}^{r+1} \in \bZ^{r+2}$ is an integer solution
to the difference equation
\eqref{equation:difference}
for $c<0$, and
$f=\sum_t f_t i_t$, $g=\sum_t g_t i_t$
as in Theorem \ref{theorem:wunram}.
Then we have the following:
\begin{enumerate}
\item For a fixed $t$, $e_t \ge 0$ implies $e''_t \ge -1$.
\item If $e''_s=e''_t=-1$ for $s<t$ and $e_u \ge 0$ for any $u \in [s, t]$, then there is $l \in (s, t)$ with $e''_l \ge 1$.
\item If $e_{\alpha-1} > e_{\alpha} \ge 0$ for some $\alpha \ge 1$,
then we have $e_0 \ge \dots \ge e_{\alpha-1} > e_{\alpha}$
\item If $0 \le e_{\beta} < e_{\beta+1}$ for some $\beta \le r$,
then we have $e_{\beta} < e_{\beta+1} \le \dots \le e_{r+1}$
\end{enumerate}
\end{corollary}

In particular,
if $e_t \ge 0$ for all $t$,
then there are integers $p$ and $p'$
with $0 \le p \le p' \le r+1$ such that
\begin{equation}\label{equation:downup}
e_0 \ge \dots  \ge e_{p-1}> e_{p} = \dots = e_{p'} < e_{p' +1} \le
\dots \le e_{r+1}
\end{equation}

The following is the key to the proof of
Proposition \ref{prop:surjective}:

\begin{lemma} \label{lm:inequality}
Let $x^a y^b z^c$ be a rational section
of $\scR_f^{\vee} \otimes \scR_g$ satisfying
\eqref{eq:inequality}.
If $c$ is negative,
then there exist a special representation $i_s$ and
a rational section $x^{a'}y^{b'}z^c$ of
$\scR_f^{\vee} \otimes \scR_{i_s}$
satisfying $0 \le a' \le a$, $0 \le b' \le b$, and
$$
 h_t := \frac{1}{n}
  \left(
   a' j_t + b' i_t + c (n - (i_t + j_t))
    - \sum_u (\delta_{us} - f_u) \eta_{ut}
  \right)
   \ge 0,
 \qquad 1 \le t \le r.
$$
\end{lemma}

\begin{proof}
Since the claim is obvious if $g$ is special,
we assume that $g$ is not special.
First note that it suffices to show that for a suitable choice of $s$,
there is a solution
$
 (h_0, \dots, h_{r+1}) \in (\bZ_{\ge 0})^{r+2}
$
to
\begin{equation} \label{equation:difference2}
 h''_t = \delta_{ts}-f_t + (b_t-2)(h_t-c),
  \qquad 1 \le t \le r,
\end{equation}
with $0 \le h_t \le e_t$
for $0 \le t \le r+1$.
Indeed, if $(h_t)$ is such a solution,
then $a':=h_{r+1}$ and $b':=h_0$
determine a desired rational section
$x^{a'}y^{b'}z^c$ of $\scR_f^{\vee} \otimes \scR_{i_s}$.
Note also that an integer solution $(h_t) \in \bZ^{r+2}$ satisfying \eqref{equation:difference2}
(without the assumption $h_t \ge 0$) is
determined by any two consecutive values
$h_{\alpha}, h_{\alpha+1}$.
Thus all we have to do is to choose suitable $s$ and values $h_{\alpha}, h_{\alpha+1}$ for some $\alpha$
such that the corresponding solution $(h_t) \in \bZ^{r+2}$ to \eqref{equation:difference2} satisfies $0
\le h_t \le e_t$.

Let $0 \le p \le p' \le r+1$ be as in \eqref{equation:downup}
and put
$$e:=e_p(=e_{p'}),$$
which is the minimum value of $e_t$.
We note that if $p<t<p'$, then $e_t''=0$ and
\begin{equation}\label{equation:non-positive}
-f_t+(b_t-2)(e-c)=e''_t-g_t=-g_t \le 0.
\end{equation}
Let $q$ be the integer determined by
$$
q:=\max \left\{t \in \bZ \mid 1 \le t \le p \text{ and }-f_t + (b_t-2)(e-c) >0\right\}
$$
if this set is non-empty, and put $q=0$ otherwise.
Similarly, let $q'$ be the integer determined by
$$
q':=\min \left\{ t \in \bZ \mid p' \le t \le r \text{ and }-f_{t} + (b_{t}-2)(e-c) >0\right\}
$$
if this set is non-empty, and put $q'=r+1$ otherwise.
Since we have \eqref{equation:non-positive} for $t \in (p, p')$, our choice of $q$ and $q'$ implies
\begin{equation}\label{equation:non-positive2}
 -f_t+(b_t-2)(e-c) \le 0,
  \qquad q < t < q'.
\end{equation}
We first consider the case where there is an integer $v \in (q, q')$ such that
$$-f_v + (b_v-2)(e-c) <0.$$
In this case, we have $f_v= b_v-1$ and
$-f_v + (b_v-2)(e-c)=-1$ by Lemma \ref{lemma:integers}.
Such an integer $v \in (q, q')$ is unique by \eqref{equation:non-positive2}, Lemma
\ref{lemma:wunramvanishing} and Lemma \ref{lemma:integers}.
Thus if $t \in (q, q')$ and $t \ne v$, then
\begin{equation}\label{eq:straight}
-f_t + (b_t-2)(e-c)=0.
\end{equation}
Now we choose $s$ as follows.
\begin{enumerate}
\item[(1)] If $v \in [p, p']$, then $s:=v$.
\item[(2)] If $v < p$, then $s:=p$.
\item[(3)] If $v > p'$, then $s:=p'$.
\end{enumerate}
Note that $e_s=e$  and $q < s < q'$ in all cases.
We have $e''_s \ge 0 > -f_s + (b_s-2)(e_s-2)$ in (1) and
$e''_s > 0 = -f_s + (b_s-2)(e_s-2)$ in (2) and (3).
Thus $e''_s  >-f_s + (b_s-2)(e_s-2)$ holds in all cases
and we obtain $g_s>0$.
This means that
$$
\delta_{st} \le g_t
$$
holds for any $t$.

Now we define $(h_t)$ satisfying \eqref{equation:difference2} by the
following two consecutive values:
\begin{enumerate}
\item[(1)] If $v \in [p, p']$, then $h_p = h_{p+1}= e$.
\item[(2)] If $v < p$, then $h_{p}=h_{p+1}=e$.
\item[(3)] If $v > p'$, then $h_{p'-1}=h_{p'}=e$.
\end{enumerate}
Then, by \eqref{eq:straight} and by our choice of $q$, $q'$ and $s$, it satisfies
\begin{enumerate}
\item[(1)]
$h_{q-1}>h_q= \dots = h_{q'} < h_{q'+1}.$
\item[(2)]
$h_{p-1}>h_p= \dots = h_{q'} < h_{q'+1}.$
\item[(3)]
$h_{q-1}>h_q= \dots = h_{p'} < h_{p'+1}.$
\end{enumerate}
in each case.
By Corollary \ref{corollary:difference},
we see that $h_t \ge e \ge 0$ for any $t$.
To compare $h_t$ and $e_t$, note that ($h_p=e_p$ and $h_{p+1}\le e_{p+1}$)
or ($h_{p'-1}\le e_{p'-1}$ and $h_{p'}=e_{p'}$) hold.
Moreover, by our choice of $s$, we have $\delta_{st} \le g_t$ for any $t$.
Therefore, we inductively obtain $h''_t \le e''_t$ and $h_t \le e_t$.

The case where there is no such $v$ is similar and easier.
If $q \ne q'$, we can take any $s$ with $g_s >0$
and we can define $(h_t)$ by $h_q=h_{q+1}=e$.
When $q=q'$, we have $e''_q=g_q-f_q + (b_q-2)(e_q-2)\ge 2$.
If $-f_q + (b_q-2)(e_q-2)=1$, then since $g_q>0$, we can
take $s=q$ and we can define $(h_t)$ by $h_q=e$, $h_{q+1}=e+1$.
If $-f_q + (b_q-2)(e_q-2)\ge 2$, then
take any $s$ with $g_s >0$ and define $(h_t)$ by $h_q=e$, $h_{q+1}=e+1$.
\end{proof}

Now we prove Proposition \ref{prop:surjective}:

\begin{proof}[Proof of Proposition \ref{prop:surjective}]
Recall that a path in $\Lambda'$ is obtained
by concatenating paths in $\Lambda$ and
inverse arrows to the arrows in $\Lambda$
corresponding to ``multiplication by $z$''
from special representations.
We show that if $x^ay^bz^c$ is a rational section of
$\scR_f^\vee \otimes \scR_g$
satisfying \eqref{eq:inequality},
then there is a path in $\Lambda'$ from $f$ to $g$
that is mapped to $x^a y^b z^c$.
Since the assertion is obvious if $c$ is non-negative,
we assume that $c$ is negative.
Then, we have $s$, $a'$ and $b'$
as in Lemma \ref{lm:inequality}.
We can regard $x^{a'}y^{b'}z^{c+1}$ as a rational map from $\scR_f$ to
$\scR_{i_s + n-q-1}$,
whose orders of zeros along the divisors $D_t$ are the same as those of
$x^{a'}y^{b'}z^c$ by Corollary \ref{cor:vanishing_only}.
Therefore, we can represent the rational map $x^ay^bz^c: \scR_f \to \scR_g$ as the product of
the rational maps $ x^{a'}y^{b'}z^{c+1}:\scR_f \to \scR_{i_s + n - q - 1}$,
$z^{-1}: \scR_{i_s + n - q - 1} \to \scR_{i_s}$,
and $x^{a-a'}y^{b-b'}:\scR_{i_s} \to \scR_g$.
The last rational map corresponds to a path in the McKay quiver
and we can prove the assertion by induction on $-c$.
\end{proof}

The proof of Proposition \ref{prop:surjective} also shows the following:

\begin{corollary} \label{cr:reflexivity}
A rational section $x^a y^b z^c$
of $\scR_f^\vee \otimes \scR_g$
is holomorphic on $U'$
if and only if \eqref{eq:inequality} is satisfied.
\end{corollary}

%

\section{Some technical lemmas}

This section is devoted to the proof of technical lemmas
on the paths of the quiver associated with a dimer model,
which will be needed later.
%
Consider a pair of zigzag paths with adjacent slopes,
which give a corner perfect matching $D$
as in Section \ref{sc:large-hexagon}.
We have a functor
$$
\phi_\frakc:\Gamma \to \Lambda
$$
with respect to the corner $\frakc$ corresponding to $D$
as in Section \ref{sc:corner},
where $\Lambda$ is the McKay quiver
whose vertices are large hexagons.
There is a corner perfect matching $\bar{D}$ of $\Lambda$
corresponding to $D$,
which consists of the arrows representing
``multiplications by $z$''.

\begin{lemma}\label{lemma:lifting}
Let $v$ be a vertex of $\Gamma$.
\begin{enumerate}
 \item
Suppose $v$ is the source of the large hexagon $\phi_\frakc(v)$ and
a path $p$ of $\Lambda$ starting from $\phi_\frakc(v)$
does not intersect with $\bar{D}$.
Then there is a path $\tilde{p}$ of $\Gamma$
from $v$ to any vertex in the large hexagon $t(p)$
such that $\phi_\frakc(\tilde{p}) = p$ and
$\tilde{p}$ does not intersect with $D$.
 \item
Suppose $v$ is the sink of the large hexagon $\phi_\frakc(v)$ and
a path $p$ of $\Lambda$ ending at $\phi_\frakc(v)$
does not intersect with $\bar{D}$.
Then there is a path $\tilde{p}$ of $\Gamma$
from any vertex in the large hexagon $s(p)$ to $v$
such that $\phi_\frakc(\tilde{p}) = p$ and
$\tilde{p}$ does not intersect with $D$.
\end{enumerate}
\end{lemma}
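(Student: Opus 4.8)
The plan is to prove part~(1) by induction on the length of $p$; part~(2) is entirely analogous, with the roles of sources and sinks interchanged and the second half of the lemma following Proposition~\ref{prop:non-degenerate} (allowed paths \emph{to} the sink) used in place of the first half, so I would only spell out part~(1). First I would record two reductions. Since the McKay quiver $\Lambda$ arises from the honeycomb tessellation by large hexagons, it has no divalent node, so ``$p$ does not intersect $\bar{D}$'' means simply that $p$ contains no arrow of $\bar{D}$, i.e. no ``multiplication by $z$'' arrow, and hence all arrows of $p$ are ``$x$'' or ``$y$'' arrows; this condition is inherited by sub-paths of $p$. Secondly, it suffices to prove the sharper statement in which $\tilde{p}$ is required to end at the source of $t(p)$: the general statement then follows by post-composing $\tilde{p}$ with a $D$-allowed path from the source of $t(p)$ to the prescribed vertex of $t(p)$, which exists by the lemma following Proposition~\ref{prop:non-degenerate} and which $h$ sends to the idempotent $e_{t(p)}$, every one of its arrows lying inside $t(p)$ and outside $D$. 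Throughout I would use repeatedly that $h(p_1 p_2) = h(p_1) h(p_2)$ whenever $p_1 p_2 \ne 0$, so that $h$ of a genuine path of $\Gamma$ is the product of the images of its arrows.

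For the base case $p = e_{h(v)}$ one takes $\tilde{p}$ to be the trivial path at $v$, since $v$ is by hypothesis the source of $h(v) = t(p)$ and $h(e_v) = e_{h(v)}$. For the inductive step I would write $p = a\cdot p'$, where $a$ is the last arrow of $p$, an ``$x$'' or ``$y$'' arrow with $s(a) = h'$ and $t(a) = t(p)$, and $p' : h(v) \to h'$ is a shorter path not meeting $\bar{D}$. By the inductive hypothesis there is a $D$-allowed path $\tilde{p}'$ from $v$ to the source $u'$ of $h'$ with $h(\tilde{p}') = p'$. It then remains to produce a $D$-allowed path $\sigma$ in $\Gamma$ starting at $u'$ and ending at the source of $t(p)$ with $h(\sigma) = a$: for then $\sigma\tilde{p}'$ is a genuine path of $\Gamma$ with $h(\sigma\tilde{p}') = a p' = p$ not meeting $D$, completing the step.

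The existence of such a $\sigma$ is the heart of the matter, and this is where I would look closely at how the perfect matching $D$ coming from the pair of zig-zag paths sits along the common boundary of the adjacent large hexagons $h'$ and $t(p)$. That common boundary is a connected segment of a zig-zag path, and the arrows of $\Gamma$ lying on it and pointing in the same direction as $a$ are exactly those sent to $a$ by $h$. One must check: that starting from the source $u'$ of $h'$ one such arrow can be reached by a $D$-allowed path inside $h'$ (hence sent to $e_{h'}$); that this boundary arrow is not itself in $D$; and that from its target one can reach the source of $t(p)$ by a $D$-allowed path inside $t(p)$ (sent to $e_{t(p)}$). This is precisely the assertion, already used in the proof of Proposition~\ref{prop:non-degenerate} and in the verification that a perfect matching coming from a pair of zig-zag paths with adjacent slopes is simple, that one can pass from the source of one large hexagon to the source of an adjacent large hexagon in the $x$- or $y$-direction along an allowed path; the present claim is obtained by the same analysis, now additionally tracking the image under $h$, which is automatic from the description of $h$ once one knows the boundary arrow chosen points in the direction of $a$.

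The main obstacle is thus this last combinatorial point — the precise behaviour of $D$ on the boundary between two adjacent large hexagons and the corresponding navigability claim — whereas the induction, the reductions to ``land at the source'', and the bookkeeping with $h$ around it are routine once that input is in hand.
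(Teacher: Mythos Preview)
Your approach is correct and is essentially the paper's: both reduce part~(1) to the two ingredients (a) a $D$-allowed path from the source of a hexagon to any vertex inside it, mapping to the idempotent, and (b) a $D$-allowed path from the source of one hexagon to the source of the adjacent hexagon in the $x$- or $y$-direction, mapping to the corresponding arrow of $\Lambda$, and then iterate along the arrows of $p$; the paper records (a) and (b) as a separate lemma immediately following. One small point: for (b) the paper does not decompose into your three checks but instead gives a single construction---take the path of $\Gamma$ parallel to the zig-zag path that forms the common boundary of $h'$ and $t(a)$, starting at the source of $h'$; this path ends precisely at the source of $t(a)$, so one never needs your third check (a $D$-allowed path inside $t(p)$ from an interior boundary point \emph{to} the source), which is not directly supplied by the source-to-vertex lemma you cite.
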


The first assertion follows from the following lemma.
We can also show the dual statement, which implies the second assertion above.

\begin{lemma}
Suppose a vertex $v$ of $\Gamma$ is the source of the large hexagon $\phi_\frakc(v)$.
\begin{enumerate}
\item 
For any vertex $w$ of $\Gamma$ in $\phi_\frakc(v)$,
there is a path $q$ from $v$ to $w$ with $\phi_\frakc(q)=e_{\phi_\frakc(v)}$
(the idempotent of $\phi_\frakc(v)$)
which doesn't contain arrows in $D$.
\item
If $a$ is an arrow of $\Lambda$ with $s(a)=\phi_\frakc(v)$ and $a \notin \bar{D}$,
then there is a path $q'$ from $v$ to the source $w$ of the large hexagon $t(a)$
with $\phi_\frakc(q')=a$ which doesn't contain arrows in $D$.
\end{enumerate}
\end{lemma}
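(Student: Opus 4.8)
Both statements are assertions about the combinatorics of a single large hexagon together with the corner perfect matching $D$ constructed in the proof of Proposition~\ref{prop:non-degenerate}, so the plan is to extract from that proof the structural facts needed and then keep track of images under $h$. Recall three things. First, $h$ sends an arrow in the interior of a large hexagon not belonging to $D$ to the idempotent of that hexagon, and an arrow on the boundary of two hexagons pointing in the same direction as the connecting arrow of $\Lambda$ to that arrow; although $h$ is not an algebra homomorphism, $h(pq)=h(p)h(q)$ whenever $pq\ne 0$. Second, inside a large hexagon the interior edges belonging to $D$ are exactly those crossed by no minimal path from the source to the sink, so in particular every minimal source-to-sink path lying in that hexagon avoids $D$. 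Third, by the consistency of the dimer model the two paths $\pi_{1},\pi_{2}$ running from the source $v$ to the sink $v'$ of $h(v)$ along the two halves of its boundary are minimal (their slopes are adjacent, Corollary~\ref{cr:minimal_path}) and equivalent, and the elementary moves witnessing this equivalence can be taken inside $h(v)$ and through minimal paths, so that the minimal $v\to v'$ paths interpolate between $\pi_{1}$ and $\pi_{2}$ and their union covers every vertex of $h(v)$.

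Given these, the first assertion is immediate: for a vertex $w$ of $h(v)$, pick a minimal path from $v$ to $v'$ inside $h(v)$ passing through $w$; its initial segment $q$ from $v$ to $w$ lies in $h(v)$ and avoids $D$, hence $h(q)$ is a product of idempotents of $h(v)$, i.e. $h(q)=e_{h(v)}$.

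For the second assertion I would first treat the case $a\notin\bar{D}$, that is, $a$ a multiplication by $x$ or by $y$; this is the only case used in Lemma~\ref{lemma:lifting}, where the lifted path avoids $\bar{D}$. Such an $a$ runs from the source of $h(v)$ to the source of $t(a)$, and the edge of $\Lambda$ underlying it lies on a stretch of one of the two chosen zig-zag paths rather than on an intersection segment. Let $e$ be the arrow of $\Gamma$ on the boundary between $h(v)$ and $t(a)$ that is incident to the source of $t(a)$ and points in the $a$-direction, so that $t(e)$ is the source of $t(a)$ and $h(e)=a$. One checks that $e\notin D$: the source of $t(a)$ is also incident to the terminal edge of $t(a)$'s intersection segment, which lies in $D$, and $D$ is a matching. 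Now set $q'=e\,q_{1}$, where $q_{1}$ is the $D$-allowed path inside $h(v)$ from $v$ to $s(e)$ furnished by the first assertion; then $q'$ is a genuine path avoiding $D$, and $h(q')=h(e)\,h(q_{1})=a\cdot e_{h(v)}=a$. For $a$ a multiplication by $z$ one argues similarly, but the connecting arrow now lies on an intersection segment whose $D$-edges must be dodged through its divalent interior vertices, landing first at the sink of $t(a)$.

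The step I expect to be the main obstacle is the third recollected fact in the first paragraph: making precise that the equivalence $\pi_{1}\sim\pi_{2}$ can be realised inside $h(v)$ through minimal paths, and that the minimal source-to-sink paths therefore sweep out every vertex of the hexagon. This is exactly where the consistency hypothesis is used --- through the statement, proved inside Proposition~\ref{prop:non-degenerate}, that the $D$-edges in the interior of a hexagon form singleton connected components --- and it should be quoted from there rather than rebuilt. A lesser difficulty, in the second assertion, is pinning down the orientation conventions relating edges of $\Gamma$ on hexagon boundaries to the $x,y,z$-labelling of $\Lambda$, which is what is really behind the existence of the edge $e$ with $h(e)=a$ and $t(e)$ the source of $t(a)$, and the verification that $e\notin D$; and, since $h$ is only multiplicative on composable paths, one must note at the single concatenation in $q'=e\,q_{1}$ that $e\,q_{1}\ne 0$, which holds because $q'$ is built as an honest path of $\Gamma$.
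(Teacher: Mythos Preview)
Your approach is correct but significantly more elaborate than the paper's. For part~(1), the paper simply takes the \emph{shortest} path from $v$ to $w$ inside $h(v)$ and asserts that, by the construction of $D$, it avoids $D$; this is essentially a restatement of the lemma immediately following Proposition~\ref{prop:non-degenerate}, which already says there is a $D$-allowed path from the source to every vertex in the hexagon. You instead argue that every vertex lies on some minimal source-to-sink path and take an initial segment. This works, but the ``sweeping'' claim you isolate as the main obstacle is precisely what the paper's shortest-path formulation avoids: the paper never needs to know that the family of minimal source-to-sink paths exhausts all vertices, only that \emph{some} $D$-allowed path reaches $w$, which is a weaker statement already established.

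For part~(2), your argument and the paper's are close in content but differ in presentation. The paper gives a single explicit construction: the arrow $a$ of $\Lambda$ corresponds to one of the two chosen zig-zag paths, which touches the source of $h(v)$ and the source of $t(a)$; the required $q'$ is simply the path of $\Gamma$ running parallel to that zig-zag path from $v$ to the source of $t(a)$. This is exactly your path $e\,q_1$ when $q_1$ is chosen to run along the hexagon boundary, but the paper names it directly rather than assembling it from part~(1) plus a boundary-crossing edge. Your verification that $e\notin D$ via the matching condition at the shared node is correct and makes explicit something the paper leaves to the phrase ``parallel to this zig-zag path.'' Your separate discussion of the case $a\in\bar D$ (multiplication by $z$) is not needed for the statement as written, since the lemma only asks for a path to the \emph{source} of $t(a)$, and indeed Lemma~\ref{lemma:lifting} only applies it to $\bar D$-avoiding paths.
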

\begin{proof}
For the first assertion,
let $w$ be a vertex in $\phi_\frakc(v)$ and
take the minimal path $q$ from $v$ to $w$ inside $\phi_\frakc(v)$.
Then, by the construction of the corner perfect matching $D$,
$q$ doesn't contain arrows in $D$.

For the second assertion,
one of the two zigzag paths used to construct the large hexagons
contacts both $v$ and $w$,
and one can take the path from $v$ to $w$ on $\Gamma$
parallel to this zigzag path
as $q'$.
\end{proof}

\begin{lemma}\label{lemma:go_to_source}
Suppose $a$ is an arrow of $\Gamma$ contained in the perfect matching $D$.
Then there is a path $q$ of $\Gamma$ with the following properties:
\begin{itemize}
\item
$q$ goes from $s(a)$ to the source $w$ of the large hexagon that is adjacent to the sink $u$ of $\phi_\frakc(t(a))$ by the arrow $b$ in $D$ with $s(b)=w$ and $t(b)=u$.
\item
$\phi_\frakc(b q)$ is equivalent to $\phi_\frakc(a)$.
\item
$q$ doesn't contain arrows in $D$.
\end{itemize}
\end{lemma}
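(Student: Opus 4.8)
The plan is to work entirely combinatorially on the dimer model, using the large hexagon structure and the description of the perfect matching $D$ coming from a pair of zig-zag paths with adjacent slopes. First I would set up the local picture around the arrow $a \in D$: since $a$ lies in the interior of a large hexagon (an arrow in $D$ inside a hexagon corresponds to a ``multiplication by $z$'' in $\Lambda$), its target $t(a)$ sits inside the large hexagon $h(t(a))$, which is the sink-hexagon of the $z$-arrow, and $s(a)$ sits inside $h(s(a))$, the source-hexagon. I want to construct $q$ from $s(a)$ to the source $w$ of the large hexagon $h(s(a))$ itself — wait, more precisely, to the source $w$ of the hexagon adjacent to $u := $ (sink of $h(t(a))$) along the $z$-arrow $b \in \bar{D}$. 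By the identification of $z$-arrows, that adjacent hexagon is exactly $h(s(a))$, so $w$ is the source of $h(s(a))$ and $b$ maps under $h$ to the $z$-arrow whose zero locus is the relevant divisor.

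The key construction is this: inside the large hexagon $h(s(a))$, use the second part of the lemma preceding this one (the ``Suppose a vertex $v$ of $\Gamma$ is the source of $h(v)$'' lemma, but applied backwards, i.e. the sink-version obtained by the stated symmetry, or rather the Lemma just after Proposition \ref{prop:non-degenerate} guaranteeing an allowed path to the sink from any vertex). Actually the cleanest route: first I would show there is an allowed path (with respect to $D$) from $s(a)$ to the sink of $h(s(a))$ — this is exactly the ``allowed path to the sink from any vertex inside the large hexagon'' statement proved in the proof of Proposition \ref{prop:non-degenerate}. Then from the sink of $h(s(a))$ I would use the boundary structure: the sink of one hexagon connects to the source of the adjacent hexagon in the $x$- and $y$-directions by allowed paths (paths corresponding to ``multiplication by $xy$''), as recalled in Step 2 of the proof that a perfect matching from adjacent zig-zag paths is simple. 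Composing, I get an allowed path $q$ from $s(a)$ to $w$. The point is to choose the geometry so that $h(bq)$ equals $h(a)$ up to the equivalence: $h(a)$ is the small loop (since $a \in D$ inside a hexagon), and $bq$ should trace out, under $h$, a closed-up loop at $h(s(a))$ which is weakly/strictly equivalent to the small loop $xyz$. Here $b$ contributes the $z$, and $q$ must contribute $xy$ — so I must route $q$ precisely through the $xy$-path from $s(a)$'s hexagon back to $w$.

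I would then verify the three bullet points in order. The endpoint condition is immediate from the construction once I pin down that the $z$-arrow $b$ with $s(b) = w$, $t(b) = u$ connects $w = $ source of $h(s(a))$ to $u = $ sink of $h(t(a))$; this follows because $a \in D$ inside a hexagon means under $h$ that $a$ maps into the small loop, and the small loop through $h(s(a))$ factors as $z$ (into $h(t(a))$'s hexagon region) times $xy$ (back), matching the identification in section \ref{sc:large-hexagon}. The relation $h(bq) \equiv h(a)$ is then the statement that $b \cdot q$ maps under $h$ to $z \cdot (xy) = $ small loop $\equiv h(a)$; I'd use the property of $h$ stated in section \ref{sc:large-hexagon} that $h(pq) = h(p)h(q)$ whenever $pq \neq 0$, which applies here since $q$ is a genuine nonzero path and $b$ composes with it. The ``$q$ does not contain arrows in $D$'' condition is where I would lean on Lemma \ref{lemma:lifting}: both the path to the sink inside a hexagon and the $xy$-path between adjacent hexagon sources/sinks are explicitly liftings of $\bar{D}$-avoiding paths of $\Lambda$, hence avoid $D$ by the first part of that lemma (and its sink-analogue).

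The main obstacle I anticipate is bookkeeping the precise combinatorics of which hexagon is adjacent to which along the $z$-arrow, and making sure the $xy$-routing of $q$ lands exactly at $w$ rather than at some translate — i.e. correctly matching the homology/translation data so that $bq$ is genuinely a closed loop at $h(s(a))$ (equivalently $s(b) = t(q)$ as vertices of $\Lambda$) and not merely a loop up to lattice translation. This requires carefully tracking the fundamental-domain picture of the large hexagon tiling and the action of the three ``multiplication'' arrows, much as in Step 2 of the proof of the simplicity of the perfect matching; once that identification is made precise, the rest is a routine assembly of the already-established lifting and allowed-path lemmas.
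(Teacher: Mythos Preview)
Your proposal has a genuine gap: you treat every arrow $a\in D$ as though it were interior to a single large hexagon and corresponded to a ``multiplication by $z$'' arrow in $\Lambda$. Neither is true in general. The perfect matching $D$ contains three kinds of arrows---those in the interior of a large hexagon, those lying on one of the two chosen zig-zag paths but not the other, and those on the intersection of both zig-zag paths---and the paper handles these as three separate cases. For an interior arrow $a\in D$ one has $h(s(a))=h(t(a))$ and $h(a)$ is the \emph{small loop}, not a $z$-arrow; your claim that ``that adjacent hexagon is exactly $h(s(a))$'' is therefore false in this case, since $w$ is the source of a genuinely different hexagon. Your proposed route (go to the sink of $h(s(a))$, then traverse an $xy$-path) never reaches $w$ as stated.

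The paper's construction is more direct and case-specific. In the interior case it takes the shortest path $q'$ from $s(a)$ to the sink $u$ inside the hexagon, then goes from $u$ to $w$ by the path $p_\pm(b)$ around a node (so that $bq\equiv q'\cdot(\text{small loop})$, giving $h(bq)\equiv h(a)$). In the one-zigzag case, $q$ is the path parallel to that zig-zag from $s(a)$ to the source $w$ of the neighbouring hexagon. In the intersection case, $a=b$ already and $q$ is the trivial path. You would need to recognize and treat all three configurations separately; the lifting lemmas you cite are useful ingredients but do not by themselves supply the correct endpoint $w$ or the required case distinction.
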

\begin{proof}
Recall from Section \ref{sc:small_minimal_weak}
that two paths are equivalent if and only if they
have the same homology class and they contain the same number
of arrows in $D$.
First assume that $a$ is inside a large hexagon
(i.e., $\phi_\frakc(s(a))=\phi_\frakc(t(a))$)
as in Figure \ref{fg:large-square-tile-zigzag-pm1}.
Then there is a minimal path $q'$ from $s(a)$ to $u$ inside $\phi_\frakc(t(a))$.
In this case,
$q$ is obtained by composing $q'$ and the path from $u$ to $w$
that goes around a node.
Next consider the case where $a$ is on one of the two zigzag paths
determining large hexagons but not on the other one
as in Figure \ref{fg:large-square-tile-zigzag-pm2}.
In this case,
$q$ is the path parallel to the zigzag path on which $a$ is lying.
Finally, suppose that $a$ is on the intersection of the two zigzag paths
as in Figure \ref{fg:large-square-tile-zigzag-pm3}.
In this case, $b$ coincides with $a$ and we can put $q=e_{s(a)}$.

\begin{figure}[htbp]
\centering
\input{large_square_tile_zigzag_pm2.pst}
\caption{Case 1}
\label{fg:large-square-tile-zigzag-pm1}
\end{figure}

\begin{figure}[htbp]
\centering
\input{large_square_tile_zigzag_pm3.pst}
\caption{Case 2}
\label{fg:large-square-tile-zigzag-pm2}
\end{figure}

\begin{figure}[htbp]
\centering
\input{large_square_tile_zigzag_pm4.pst}
\caption{Case 3}
\label{fg:large-square-tile-zigzag-pm3}
\end{figure}

\end{proof}

Lemma \ref{lemma:lifting}, \ref{lemma:go_to_source} and its dual yield the following:

\begin{lemma}\label{lemma:cancel}
Let $a$ be an arrow of $\Gamma$ contained in the perfect matching $D$.
\begin{itemize}
\item
Suppose $p$ is a path from $t(a)$ to the sink $u$ of some large hexagon and $p$ does not contain arrows in $D$.
Let $b$ be the arrow such that $t(b)=u$ and $s(b)$ is the source of the adjacent large hexagon. 
Then, there is a path $p'$ such that $pa$ is equivalent to $bp'$.
\item
Suppose $q$ is a path from the source $u$ of some large hexagon to $s(a)$ and $q$ does not contain arrows in $D$.
Let $c$ be the arrow such that $s(c)=u$ and $t(c)$ is the sink of the adjacent large hexagon. 
Then, there is a path $q'$ such that $a q$ is equivalent to  $q' c$.
\end{itemize}
\end{lemma}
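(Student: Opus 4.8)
The two assertions are interchanged by reversing every arrow of $\Gamma$ (which swaps black and white nodes, sources and sinks, and turns $h$ into a map to the opposite McKay quiver), so I will only plan the first one; the second follows by applying the first to the opposite dimer model. The idea is to transport the $D$-arrow $a$ sitting at the head of $ap$ down to the tail, one large hexagon at a time, using Lemma~\ref{lemma:go_to_source} for each single step and Lemma~\ref{lemma:lifting} (together with the lemma after Proposition~\ref{prop:non-degenerate} on the existence of allowed paths to a sink) to re-express the intermediate heads of the path as allowed paths emanating from a source, and finally to promote the resulting weak equivalence to a genuine equivalence by invoking the consistency of the dimer model.

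In more detail, I would induct on the $\Lambda$-distance from the hexagon $h(t(a))$ to the hexagon whose sink is $u$, equivalently on the number of hexagon boundaries crossed by $p$. For the base case $h(u)=h(t(a))$: Lemma~\ref{lemma:go_to_source} applied to $a$ produces a $D$-avoiding path $q$ from $s(a)$ to the source $w$ of the hexagon adjacent to $h(t(a))$ through the $D$-arrow $b_0$ with $s(b_0)=w$ and $t(b_0)=u$; since $u$ is the unique sink of $h(t(a))$ and $b_0$ is the $z$-direction $D$-arrow into it, $b_0$ coincides with the prescribed arrow $b$, and one sets $p'=q$. To see $ap\equiv p'b$, observe that $p$ is a $D$-avoiding path inside a single large hexagon, so $h(p)$ is the idempotent of that hexagon; since $h$ is multiplicative on composable paths, the $h$-images of $ap$ and of $p'b=q b_0$ both reduce to $h(a)$ up to the relations of $\Lambda$ (the latter by the relation $h(b_0q)\equiv h(a)$ of Lemma~\ref{lemma:go_to_source}). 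Hence $ap$ and $p'b$ carry the same $\bT$-weight as maps between tautological line bundles over the open torus (the argument in the proof of Lemma~\ref{lm:inj}), so they are weakly equivalent, and therefore equivalent by consistency (Lemma~\ref{lm:consistency_implies_first_consistency} together with Proposition~\ref{prop:non-degenerate}). For the inductive step I would factor $p$ at its first arrow leaving $h(t(a))$, apply Lemma~\ref{lemma:go_to_source} to move $a$ into the neighbouring hexagon, use Lemma~\ref{lemma:lifting} to reassemble the remaining portion of $p$ into a $D$-avoiding path issuing from the source of that neighbouring hexagon so that the inductive hypothesis applies there, and splice the pieces back together.

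The main obstacle will be the bookkeeping in the inductive step: one must check that after moving $a$ one hexagon over, the ``remaining path'' can always be chosen to avoid $D$ and to start at a source, so that Lemma~\ref{lemma:lifting} and the inductive hypothesis are genuinely applicable, and one must keep track of which of the three local configurations of the $D$-arrow relative to the two distinguished zig-zag paths occurs (the three cases of Figures~\ref{fg:large-square-tile-zigzag-pm1}--\ref{fg:large-square-tile-zigzag-pm3}). All of the geometric content is already packaged in Lemmas~\ref{lemma:lifting} and \ref{lemma:go_to_source} and their duals; the only remaining ingredient is the implication ``same $h$-image up to the relations of $\Lambda$'' $\Rightarrow$ ``weakly equivalent'' $\Rightarrow$ ``equivalent'', and this is exactly where consistency enters and is already available.
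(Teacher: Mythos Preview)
Your base case and your identification of the final step (same $h$-image $\Rightarrow$ same $\bT$-weight $\Rightarrow$ weakly equivalent $\Rightarrow$ equivalent, via consistency) are both correct and are exactly the ingredients the paper's one-line proof suppresses. The problem is the inductive step. Lemma~\ref{lemma:go_to_source} does not move the $D$-arrow ``into the neighbouring hexagon along $p$''; it always replaces $a$ by the specific $z$-arrow $b_0$ from the source $w_0$ of the $z$-adjacent hexagon into the sink $u_0$ of $h(t(a))$. After one application you are left with the pair $(b_0,\tilde p)$ where $\tilde p:u_0\to u$ is a $D$-avoiding lift of $h(p)$ obtained from Lemma~\ref{lemma:lifting}(2); but $h(t(b_0))=h(u_0)=h(t(a))$, so the $\Lambda$-distance to $h(u)$ has not decreased at all, and a second application of Lemma~\ref{lemma:go_to_source} to $b_0$ is the trivial Case~3. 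So the induction never makes progress.

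The paper's intended argument is direct, with no induction. Apply Lemma~\ref{lemma:go_to_source} once to $a$, obtaining a $D$-avoiding $q:s(a)\to w_0$ with $w_0$ a source and $h(b_0q)\equiv h(a)$. Now work in $\Lambda$: the path $h(p)h(b_0)$ from $h(w_0)$ to $h(u)$ contains exactly one $\bar D$-arrow, and by the relations of the McKay quiver ($zx\equiv xz$, $zy\equiv yz$) it is equivalent to $h(b)\cdot r$ for a $\bar D$-avoiding path $r:h(w_0)\to h(s(b))$. Since $w_0$ is a source, Lemma~\ref{lemma:lifting}(1) lifts $r$ to a $D$-avoiding $\tilde r:w_0\to s(b)$. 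Set $p'=\tilde r\, q$; then $h(bp')\equiv h(b)\,r\,h(q)\equiv h(p)h(b_0)h(q)\equiv h(p)h(a)=h(pa)$, and your consistency argument finishes. The commutation in $\Lambda$ is the step your induction was implicitly trying to reproduce one arrow at a time, but it is much cleaner to do it in $\Lambda$ in one stroke and then lift.
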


\section{Preservation of the tilting condition: the general case}
 \label{sc:tilting-general}

Let $\Gamma$ be the quiver with relations
associated with a consistent dimer model,
and $\Gamma'$ be another quiver
obtained from $\Gamma$
by adding inverse to the arrows
from the sources of special large hexagons
to the sinks of the neighboring large hexagons
corresponding to ``multiplication by $z$''.
Let $\scM$ be the moduli space of representations of $\Gamma$
with the stability parameter chosen in Section \ref{sc:corner},
so that $\scM$ contains $U=\ahilb(\bC^3)$ as an open subscheme and
$Y = \ahilb(\bC^2)$ as a closed subscheme
for some finite abelian small subgroup $A$ of $\GL_2(\bC)$.
The McKay quiver of $A$ as a subgroup of $\SL_3(\bC)$
is denoted by $\Lambda$.
The moduli space $\scM$ carries the tautological bundles $\scL_v$
corresponding to vertices $v$ of $\Gamma$.
Let $\scM'$ be the complement $\scM \setminus Y$
and $\scL'_{v}$ be the restriction of $\scL_v$ to $\scM'$.
The restrictions of $\scL_v$ and $\scL'_v$
to $U$ and $U' = U \setminus Y$
give the tautological bundle $\scR_{\phi_\frakc(v)}$
on $U = \ahilb(\bC^3)$ and its restriction $\scR_{\phi_\frakc(v)}'$
to $U' = \ahilb(\bC^3) \setminus \ahilb(\bC^2)$ respectively.
We prove the following in this section:

\begin{proposition} \label{prop:tilting-general}
$\bigoplus_{v \in V} \scL_v$ is a tilting object
if and only if so is $\bigoplus_{v \in V} \scL'_{v}$.
\end{proposition}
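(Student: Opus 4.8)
The plan is to play off cohomology on $\scM$ against cohomology on $\scM^0 = \scM\setminus Y$ through the local cohomology (cohomology with supports) long exact sequence, reducing the delicate points to a neighbourhood of $Y$, where the operation is modelled on the already-treated passage from $U := \ghilb(\bC^3)$ to $U^0 := U\setminus Y$. The key input is excision. Since $Y\subset U$ with $U$ open in $\scM$, and since $\scL_v|_U$ is the tautological bundle $\scR_{h(v)}$ of the large hexagon $h(v)$ (the open immersion $\ghilb(\bC^3)\hookrightarrow\scM$ being induced by the functor $\bC\Lambda\text{-mod}\to\bC\Gamma\text{-mod}$, $(V_h)_h\mapsto(V_{h(v)})_v$), we get
$$
 H^i_Y(\scM,\scL_v^\vee\otimes\scL_w)\cong H^i_Y(U,\scR_{h(v)}^\vee\otimes\scR_{h(w)})
$$
for all vertices $v,w$ of $\Gamma$, and this vanishes for $i\geq 2$ by Lemma \ref{lm:vanishing_of_local_cohomology}. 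Writing $\scF=\scL_v^\vee\otimes\scL_w$ and using $H^0_Y(\scM,\scF)=0$, the local cohomology sequence collapses to
$$
 0\to H^0(\scM,\scF)\to H^0(\scM^0,\scF)\to H^1_Y(\scM,\scF)\to H^1(\scM,\scF)\to H^1(\scM^0,\scF)\to 0,
$$
together with $H^i(\scM,\scF)\cong H^i(\scM^0,\scF)$ for $i\geq 2$.

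From this the ``only if'' direction is immediate: if $\bigoplus_v\scL_v$ is tilting on $\scM$ then $H^i(\scM,\scF)=0$ for $i\geq 1$, hence $H^i(\scM^0,\scF)=0$ for $i\geq 1$ by the displayed sequences, so $\bigoplus_v\scL'_v$ is acyclic; and it is a generator by Lemma \ref{lm:restricting_generator}, since every coherent sheaf on $\scM^0$ extends to $\scM$, where it is an iterated cone of copies of the $\scL_v$, and restriction commutes with cones.

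For the ``if'' direction, assume $\bigoplus_v\scL'_v$ is tilting on $\scM^0$. Acyclicity of $\bigoplus_v\scL_v$ in degrees $\geq 2$ is the same isomorphism read backwards, and by the five-term sequence the vanishing of $H^1(\scM,\scF)$ is equivalent, given $H^1(\scM^0,\scF)=0$, to surjectivity of $H^0(\scM^0,\scF)\to H^1_Y(\scM,\scF)$. I would prove this by localising again: under excision $H^1_Y(\scM,\scF)\cong H^1_Y(U,\scR_{h(v)}^\vee\otimes\scR_{h(w)})$, and on $U=\ghilb(\bC^3)$ the analogous five-term sequence, together with the facts that $\bigoplus_\rho\scR_\rho$ is tilting on $\ghilb(\bC^3)$ (so $H^1(U,\scR_{h(v)}^\vee\otimes\scR_{h(w)})=0$, cf. \cite{bkr01}) and that $\bigoplus_\rho\scL_\rho$ is tilting on $U^0$ (Lemma \ref{lemma:tilting-G-Hilb}, so $H^1(U^0,-)=0$), identifies $H^1_Y(U,\scR_{h(v)}^\vee\otimes\scR_{h(w)})$ with the quotient of $H^0(U^0,\scL_{h(v)}^\vee\otimes\scL_{h(w)})$ by the image of $H^0(U,\scR_{h(v)}^\vee\otimes\scR_{h(w)})$. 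By Proposition \ref{prop:surjective} that group is spanned by classes of paths of $\Lambda'$ from $h(v)$ to $h(w)$: a path using no inverse arrow already extends over $U$ and so maps to $0$, while a general path, after factoring out a power of the small loop, lifts to a path of $\Gamma'$ from $v$ to $w$ by Lemma \ref{lemma:lifting} and Lemma \ref{lemma:go_to_source} (the inverse arrows of $\Lambda'$ being the $h$-images of those of $\Gamma'$), and a path of $\Gamma'$ is an honest morphism $\scL'_v\to\scL'_w$ over $\scM^0$ whose restriction to $U^0$ realises the given class. Hence $H^0(\scM^0,\scF)\to H^1_Y(\scM,\scF)$ is onto and $H^1(\scM,\scF)=0$.

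Finally, still in the ``if'' direction, I must check that $\bigoplus_v\scL_v$ generates $D^b\coh\scM$: since $Y$ is closed in $\scM$ and contained in $U$, extension by zero identifies $D^b_Y\coh\scM$ with $D^b_Y\coh U\subseteq D^b\coh U=\langle\bigoplus_\rho\scR_\rho\rangle=\langle\bigoplus_v\scR_{h(v)}\rangle$ (\cite{bkr01}), and using $\scL_v|_U=\scR_{h(v)}$ together with the Koszul resolution of $\scO_Y$ on $\scM$ one deduces $D^b_Y\coh\scM\subseteq\langle\bigoplus_v\scL_v\rangle$; combined with the hypothesis that $\bigoplus_v\scL_v$ restricts to a generator of $D^b\coh\scM^0$ and the localisation sequence $D^b_Y\coh\scM\to D^b\coh\scM\to D^b\coh\scM^0$, this gives that $\bigoplus_v\scL_v$ generates. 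The substantive difficulty is concentrated near $Y$: the degree-one surjectivity and the inclusion $D^b_Y\coh\scM\subseteq\langle\bigoplus_v\scL_v\rangle$ are exactly the points where one must import both the tilting statement (Proposition \ref{prop:tilting-G-Hilb}, through Lemma \ref{lemma:tilting-G-Hilb}) and the surjectivity statement (Proposition \ref{prop:surjective}) for the local model $\ghilb(\bC^3)\supset\ghilb(\bC^2)$, transported to $\scM$ by the path-lifting lemmas of the preceding section.
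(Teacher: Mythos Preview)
Your argument coincides with the paper's for the ``only if'' direction and for acyclicity in the ``if'' direction: both use the local cohomology sequence together with excision $H^i_Y(\scM,\scL_v^\vee\otimes\scL_w)\cong H^i_Y(U,\scR_{h(v)}^\vee\otimes\scR_{h(w)})$ and Lemma~\ref{lm:vanishing_of_local_cohomology}. For the surjectivity of $H^0(\scM^0,\scF)\to H^1_Y(\scM,\scF)$ your sketch is in the right spirit but skips the step the paper isolates in Lemma~\ref{lemma:cancel}: one must first rearrange the $\Lambda'$-path so that each inverse arrow sits at a sink-to-source position and the remaining arrows avoid $\bar D$; only then do the hypotheses of Lemma~\ref{lemma:lifting} apply at each end. ``Factoring out a power of the small loop'' does not by itself achieve this normal form.

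The substantive gap is in your generator argument. You assert $D^b_Y\coh\scM\subseteq\langle\bigoplus_v\scL_v\rangle$ by combining excision $D^b_Y\coh\scM\cong D^b_Y\coh U$ with the inclusion $D^b_Y\coh U\subseteq\langle\bigoplus_\rho\scR_\rho\rangle_U$ and ``the Koszul resolution of $\scO_Y$''. But the second inclusion lives in $D^b\coh U$: the cones realising an object of $D^b_Y\coh U$ inside $\langle\bigoplus_\rho\scR_\rho\rangle_U$ go through the bundles $\scR_\rho$ themselves, which are not supported on $Y$ and do not extend by zero to $\scM$. Nothing you cite lets you transport those cones to $D^b\coh\scM$; the divisor resolution $\scO_\scM(-Y)\to\scO_\scM$ does not help, since $\scO_\scM(-Y)$ is not a priori in $\langle\bigoplus_v\scL_v\rangle$. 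So the inclusion $D^b_Y\coh\scM\subseteq\langle\bigoplus_v\scL_v\rangle$ is unproved, and with it your localisation-sequence step.

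The paper circumvents this entirely. Instead of proving $D^b_Y\subseteq\langle\bigoplus_v\scL_v\rangle$, it takes $\alpha$ with $\RHom(\bigoplus_v\scL_v,\alpha)=0$ and shows $\iota^*\alpha=0$ directly: Lemma~\ref{lm:restriction} gives, for each special source $s$ with corresponding target $t$, an exact sequence $0\to\scL_s\to\scL_t\to\scL_t|_Y\to 0$, whence $\iota_*\iota^*\scL_s^\vee\cong\{\scL_t^\vee\to\scL_s^\vee\}$ and $\RHom(\iota^*\scL_s,\iota^*\alpha)=0$. Since $\bigoplus_{s\text{ special}}\iota^*\scL_s$ is tilting on $Y$ by Van den Bergh's Theorem~\ref{th:VdB}, this forces $\iota^*\alpha=0$, so $\Supp\alpha\subseteq\scM^0$ and then $\alpha=0$ by the hypothesis on $\scM^0$. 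The special McKay correspondence on $Y$ is the missing ingredient your argument would need.
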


\begin{proof}
In both directions, we use the long exact sequence
\begin{equation} \label{equation:long-exact}
 \cdots \to H^{i}_Y(\scM, \scL_v^{\vee}\otimes \scL_w)
  \to H^i(\scM, \scL_v^{\vee}\otimes \scL_w)
  \to H^i(\scM', {\scL'_{v}}^{\vee} \otimes \scL'_{w})
  \to \cdots.
\end{equation}
Since $Y$ is contained in $U$,
one has
$
 H^{i}_Y(\scM, \scL_v^{\vee}\otimes \scL_w)
  \cong H^{i}_Y(U, \scR_{\phi_\frakc(v)}^{\vee}\otimes \scR_{\phi_\frakc(w)})
$
and the ``only if'' part  follows immediately
from
Lemma \ref{lm:restricting_generator} and
Lemma \ref{lm:vanishing_of_local_cohomology}.

To show the ``if'' part,
assume that $\bigoplus_{v} \scL_{v}'$ is a tilting object.
In this case,
\pref{lm:vanishing_of_local_cohomology} and \eqref{equation:long-exact} implies
the vanishing of $H^i(\scM, \scL_v^{\vee}\otimes \scL_w)$
for $i \ge 2$,
and for acyclicity it suffices to show the surjectivity of 
\begin{equation}\label{equation:restriction}
 H^0(\scM', {\scL'_{v}}^{\vee} \otimes \scL'_{w})
  \to H^1_Y(\scM, \scL_v^{\vee}\otimes \scL_w).
\end{equation}
Put $\scL_{vw}:=\scL_v^{\vee} \otimes \scL_w$ and note that
$$
 H^1_Y(\scM, \scL_{vw})
  \cong H^1_Y(U, \scL_{vw}|_U)
  \cong \varinjlim_l \Ext^1_{\scO_U}(\scO_{lY}, \scL_{vw}|_U)
  \cong \varinjlim_l H^0(\scL_{vw} \otimes \scO_{lY}(lY)),
$$
where the last isomorphism follows from \eqref{eq:ext^i}.
Then the surjectivity of \eqref{equation:restriction} follows from the surjectivity of
$$
H^0(\scL_{vw}(lY)) \to H^0(\scL_{vw} \otimes \scO_{lY}(lY))
$$
for each $l>0$,
which is reduced to the surjectivity of
$$
H^0(\scL_{vw}(lY)) \to H^0(\scL_{vw}(lY)|_Y)
$$
by induction on $l$ with the aid of the commutative diagram
$$
\begin{CD}
0 @>>> H^0(\scL_{vw}((l-1)Y)) @>>> H^0(\scL_{vw}(lY)) @>>> H^0(\scL_{vw}(lY)|_Y) \\
@.     @VVV                        @VVV                        @|  \\
0 @>>> H^0(\scL_{vw}\otimes \scO_{(l-1)Y}((l-1)Y)) @>>> H^0(\scL_{vw}\otimes \scO_{lY}(lY)) @>>> H^0(\scL_{vw}(lY)|_Y).
\end{CD}
$$
Now, for a fixed $l$, $H^0(\scL_{vw}(lY)|_Y)$ has a basis of the form $x^ay^bz^{-l}$
satisfying \eqref{eq:inequality} where we replace $c$ with $-l$.
Then \pref{cr:reflexivity} shows that it can be lifted to a section of $\scL_{vw}|_{U'}$
and therefore is given by a path of $\Lambda'$ by
Proposition \ref{prop:surjective}.
Moreover,
by the proof of Proposition \ref{prop:surjective} and the assumption $l>0$,
the path can be chosen so that it contains an inverse arrow
(corresponding to ``multiplication by $z^{-1}$'' to a special representation)
but not arrows in the corner perfect matching $\bar D$.
Since
\begin{itemize}
\item
an inverse arrow in $\Lambda'$ can
be lifted to an inverse arrow arrow of $\Gamma'$ going from a sink to a source,
\item a path to the source of an inverse arrow in $\Lambda'$ can be lifted to
a path from an arbitrary vertex in the large hexagon to the source of the corresponding inverse arrow in $\Gamma'$ by 
the second statement of Lemma \ref{lemma:lifting} and
\item a path from the target of an inverse arrow in $\Lambda'$ can be lifted to
a path to an arbitrary vertex in the large hexagon from the source of the corresponding inverse arrow in $\Gamma'$ by 
the first statement of Lemma \ref{lemma:lifting},
\end{itemize}
the path can be lifted to a path of $\Gamma'$ from $v$ to $w$ and
\eqref{equation:restriction} is surjective.

Finally, we show that $\bigoplus_v \scL_v$ is a generator.
For an object $\alpha$ of $D^b \coh \scM$,
assume that $\RHom(\bigoplus_v \scL_v, \alpha)=0$.
Let $s$ be the source of the large hexagon
corresponding to a special representation of $A$ and
$t$ be the sink of the adjacent large hexagon which is the target of ``multiplication by $z$''
from the special representation.
Let $\iota$ denote the closed immersion $Y \to \scM$.
\pref{lm:restriction} below shows that
$$
 \iota_* \iota^* \scL_s^\vee \cong \{ \scL_t^\vee \to \scL_s^\vee \},
$$
so that one has
\begin{align*}
 \RHom(\iota^* \scL_s, \iota^* \alpha)
  &= \RGamma ( (\iota^* \scL_s)^\vee \otimes \iota^* \alpha) \\
  &= \RGamma ( \iota^* \scL_s^\vee \otimes \iota^* \alpha) \\
  &= \RGamma ( \iota_*( \iota^* \scL_s^\vee \otimes \iota^* \alpha)) \\
  &= \RGamma ( \iota_* \iota^* \scL_s^\vee \otimes \alpha)) \\
  &= \RGamma ( \{ \scL_t^\vee \to \scL_s^\vee \} \otimes \alpha)) \\
  &= 0.
\end{align*}
Since $\bigoplus \iota^* \scL_s$ is a tilting object on $Y$
by Theorem \ref{th:VdB},
we have $\iota^* \alpha =0$.
It follows that $\Supp \alpha \subset \scM'$ and
we obtain $\alpha =0$ by our assumption that
$\bigoplus_{v} \scL_{v}'$ is a tilting object.
\end{proof}

\begin{lemma} \label{lm:restriction}
Let $s$ be the source of the large hexagon
corresponding to a special representation of $A$ and
$t$ be the target of ``multiplication by $z$''
into the adjacent large hexagon.
Then we have an exact sequence
$$
0 \to \scL_t^\vee \to \scL_s^\vee \to \scL_s^\vee|_Y \to 0.
$$
\end{lemma}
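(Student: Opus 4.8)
The plan is to exhibit the sequence as the standard one attached to a section of a line bundle vanishing along $Y$. Via the universal representation of $\Gamma$ on $\scM$, ``multiplication by $z$'' induces a morphism $z\colon\scL_s\to\scL_t$ of line bundles. It is not identically zero: its restriction to $U^{0}=U\setminus Y$ is an isomorphism by the immediate corollary of Proposition \ref{prop:Zrho}, since $U=\ghilb(\bC^{3})$ and $\rho$ is special. Hence, $\scM$ being integral, $z$ is injective as a map of sheaves and $\Coker z\cong\scL_t\otimes\scO_{D}$, where $D=(z)_{0}$ is the effective Cartier divisor along which $z$ vanishes. So the lemma is equivalent to the identity of subschemes $D=Y$; granting this, the sequence is $\scL_t\otimes\bigl(0\to\scO_\scM(-D)\to\scO_\scM\to\scO_{D}\to0\bigr)$.

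The first step is to compute $D$ over $U$. Under the identification of the vertices of $\Gamma$ with the large hexagons (section \ref{sc:large-hexagon}) one has $\scL_s|_{U}\cong\scR_{\rho}$ and $\scL_t|_{U}\cong\scR_{\rho\otimes\omega}$, and $z|_{U}$ is the map $z_{\rho}$ of section \ref{sc:tilting-G-Hilb}; thus $D\cap U=Z_{\rho}$ in the notation of Proposition \ref{prop:Zrho}. Since $G$ is abelian, $r=\rank\scR_{\rho}=\dim\rho=1$, and since $\rho$ is special, part (2) of that proposition gives $Z_{\rho}=rY=Y$, the reduced divisor $Y$ with multiplicity one. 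Hence $D\cap U=Y$, and we may write $D=Y+D'$ with $D'$ an effective divisor supported in $\scM\setminus U$. Because the arrow inducing $z$ carries a well-defined weight for the $\bT$-action (as in the proof of Lemma \ref{lm:inj}), the divisor $D$ is $\bT$-invariant, and therefore so is $D'$.

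It remains to prove $D'=0$, equivalently that $z$ is an isomorphism along every $\bT$-invariant prime divisor of $\scM$ not meeting $U$; this is the crux. On the big torus all the $\scL_{v}$ are trivialized compatibly and $z$ becomes a fixed monomial (the coordinate $z$), so the order of vanishing of $z$ along such a divisor $E$ is determined by the ray of $E$ together with the $\bT$-equivariant structure of $\scL_s^{\vee}\otimes\scL_t$ --- exactly the kind of computation carried out in section \ref{sc:surj-G-Hilb} for the rays $\tfrac1n(j_{t},i_{t},n-i_{t}-j_{t})$ of $U$, to be repeated for the remaining rays of the fan of $\scM$ (the one subdividing the cone over the lattice polygon of $\Gamma$), using that $\scL_s$ corresponds to a special representation and the inequality \eqref{equation:inequality}. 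Equivalently, one must show $\scL_s^{\vee}\otimes\scL_t\cong\scO_\scM(Y)$ with $z$ the canonical section; this bookkeeping over $\scM\setminus U$ is the main obstacle. Once it is settled, $D=Y$ and the exact sequence $0\to\scL_s\to\scL_t\to\scL_t|_{Y}\to0$ follows. I note that a shortcut would be to prove directly that $\Coker z$ has connected support: then, $Y$ being nonempty and $Y\subset U$ and $D'\subset\scM\setminus U$ being disjoint closed subsets of $D$, one would get $D'=\varnothing$ at once; but the Fourier--Mukai argument giving connectedness of $Z_{\rho}$ in Proposition \ref{prop:Zrho} is not yet available on $\scM$, so I expect the toric route to be the one to follow.
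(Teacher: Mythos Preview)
Your setup is correct and matches the paper's: the map $z\colon\scL_s\to\scL_t$ is injective with cokernel $\scL_t|_D$ for $D=(z)_0$, and Proposition \ref{prop:Zrho} gives $D\cap U=Y$ since $\rho$ is special and abelian. The issue is the step you flag as ``the main obstacle'': showing $D'=0$ on $\scM\setminus U$.

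You are missing a one-line argument that bypasses both the toric bookkeeping and the connectedness route. By construction (section \ref{sc:tilting-general}), $\scM^0=\scM\setminus Y$ is the moduli space of representations of $\Gamma'$, the quiver obtained from $\Gamma$ by \emph{inverting} precisely the arrow from $s$ to $t$. Hence on $\scM^0$ the universal map $z$ is an isomorphism by definition of $\Gamma'$. Since $Y\subset U$, we have $\scM\setminus U\subset\scM^0$, so $D'$, which is supported in $\scM\setminus U$, must vanish. This is exactly the paper's proof: verify on $U$ via Proposition \ref{prop:Zrho}, then observe that $z$ is invertible on $\scM^0$ because $\scM^0$ parametrizes $\Gamma'$-representations.

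Your proposed toric computation would presumably succeed, but it is unnecessary; the moduli-theoretic interpretation of $\scM^0$ already forces $z$ to be invertible off $Y$.
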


\begin{proof}
Since $\scM'$ is the moduli of representations of $\Gamma'$
by Proposition \ref{prop:complement},
the restriction of the map $\scL_s \to \scL_t$ to $\scM'$ is an isomorphism.
Then the assertion follows from Lemma \ref{lm:reduced}
\end{proof}

\section{Preservation of surjectivity: the general case}
 \label{sc:surj-general}

We use the same notation as in Section \ref{sc:tilting-general}.
In particular,
the quiver $\Gamma'$ is obtained from $\Gamma$
by inverting some of the arrows.

\begin{proposition}\label{prop:surj-general}
Assume that both $\bigoplus \scL_v$ and $\bigoplus \scL'_{v}$
are tilting objects.
Then the map $\bC \Gamma \to \End(\bigoplus \scL_v)$ is surjective
if and only if so is $\bC \Gamma' \to \End(\bigoplus \scL'_{v})$.
\end{proposition}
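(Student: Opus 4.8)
The plan is to compare $\bT$-weight spaces, $\bT$ being the three-dimensional torus acting on $\scM$ and on $\scM^0 = \scM\setminus Y$, and to reduce both implications to a statement about which Laurent monomials are realised by paths. Since $\scM^0$ is dense and open in $\scM$ and the $\scL_v$ are locally free, the restriction $\End(\bigoplus_v\scL_v)\to\End(\bigoplus_v\scL'_v)$ is injective and $\bT$-equivariant. Because both $\bigoplus_v\scL_v$ and $\bigoplus_v\scL'_v$ are tilting, the long exact sequence \eqref{equation:long-exact} gives for each pair of vertices $v,w$ a short exact sequence $0\to\Hom(\scL_v,\scL_w)\to\Hom(\scL'_v,\scL'_w)\to H^1_Y(\scM,\scL_v^{\vee}\otimes\scL_w)\to 0$; moreover, by the description used in the proof of Proposition \ref{prop:tilting-general}, each $\bT$-weight space of $\Hom(\scL'_v,\scL'_w)$ is at most one-dimensional and, when nonzero, is spanned by a Laurent monomial $x^ay^bz^c$, which lies in $\Hom(\scL_v,\scL_w)$ exactly when $c\ge 0$ and maps onto a generator of the corresponding weight space of $H^1_Y(\scM,\scL_v^{\vee}\otimes\scL_w)$ exactly when $c<0$. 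Since a path of $\Gamma$ (resp.\ of $\Gamma'$) is sent to a $\bT$-weight vector, surjectivity of $\bC\Gamma\to\End(\bigoplus_v\scL_v)$ (resp.\ of $\bC\Gamma'\to\End(\bigoplus_v\scL'_v)$) is equivalent to the statement that for all $v,w$ and every weight $(a,b,c)$ with $c\ge 0$ (resp.\ with $c$ arbitrary) for which $x^ay^bz^c$ is a nonzero element of the relevant $\Hom$-space, there is a path from $v$ to $w$ with that image.

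Granting this reformulation, the forward implication is essentially immediate. Assuming $\bC\Gamma\to\End(\bigoplus_v\scL_v)$ surjective, fix $v,w$ and such a weight $(a,b,c)$. If $c\ge 0$ the monomial $x^ay^bz^c$ lies in $\Hom(\scL_v,\scL_w)$, hence is the image of a path of $\Gamma$ by hypothesis, and this path is also a path of $\Gamma'$. If $c<0$ the monomial defines a nonzero class in $H^1_Y(\scM,\scL_v^{\vee}\otimes\scL_w)\cong H^1_Y(U,\scL_v^{\vee}\otimes\scL_w|_U)$, and one repeats the argument from the proof of Proposition \ref{prop:tilting-general}: the vanishing of $H^1(U,\scL_v^{\vee}\otimes\scL_w|_U)$ lifts the class to a section over $U\setminus Y$, Proposition \ref{prop:surjective} represents this section by a path of $\Lambda'$, Lemma \ref{lemma:cancel} lets us choose this path so that it contains a reverse arrow (``multiplication by $z^{-1}$'') but no arrow of the corner perfect matching $\bar D$, and Lemma \ref{lemma:lifting} lifts it to a path of $\Gamma'$ from $v$ to $w$ of weight $(a,b,c)$. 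This half uses only already available results, not the surjectivity hypothesis, so $\bC\Gamma'\to\End(\bigoplus_v\scL'_v)$ is surjective.

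For the converse, assume $\bC\Gamma'\to\End(\bigoplus_v\scL'_v)$ surjective. The dimer model underlying $\Gamma'$ is consistent by Proposition \ref{prop:preservation-of-consistency} and its moduli space is $\scM^0$, so Lemma \ref{lm:inj} shows that this map is also injective, hence an isomorphism. Fix $v,w$ and a weight $(a,b,c)$ with $c\ge 0$ and $x^ay^bz^c$ nonzero in $\Hom(\scL_v,\scL_w)$; by surjectivity and $\bT$-equivariance, $x^ay^bz^c$ is the image of a single path $p'$ of $\Gamma'$ of weight $(a,b,c)$, and it suffices to show $p'$ is equal in $\bC\Gamma'$ to a path of $\Gamma$, since such a path will then have image $x^ay^bz^c$. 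I would argue by induction on the number of reverse arrows in a representative of $p'$: taking this number minimal and supposing it positive, fix a reverse arrow $\overline a$ occurring in $p'$, where $a$ is a ``multiplication by $z$'' arrow out of the source of a special large hexagon. Since $\overline a$ contributes $-1$ to the $z$-weight while the $z$-weight of $p'$ is $c\ge 0$, the remaining arrows of $p'$ contribute at least $c+1\ge 1$ to the $z$-weight, so $p'$ contains an honest arrow of $\Gamma$ lying over a ``multiplication by $z$''; using the relations of $\Gamma'$ together with Lemma \ref{lemma:cancel} and its dual one slides the sub-paths of $p'$ separating $\overline a$ from such an arrow $a$ until $a\overline a$ (or $\overline a a$) becomes composable and cancels to an idempotent, yielding a representative with strictly fewer reverse arrows and contradicting minimality. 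Hence $p'$ contains no reverse arrow.

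I expect this last step of the converse — rewriting a path of $\Gamma'$ of non-negative $z$-weight as a path of $\Gamma$ by repeatedly cancelling reverse arrows against forward $z$-arrows, using Lemma \ref{lemma:lifting}, Lemma \ref{lemma:go_to_source} and Lemma \ref{lemma:cancel} to perform the sliding — to be the main obstacle; the remainder is the weight-space bookkeeping of the first paragraph together with Propositions \ref{prop:tilting-general} and \ref{prop:surjective} and Lemma \ref{lm:inj}, all already in place.
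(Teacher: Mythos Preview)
Your forward direction is correct and matches the paper; both use the lifting argument from the proof of Proposition \ref{prop:tilting-general} for the $c<0$ weights and the hypothesis for $c\ge 0$.

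For the converse you take a different route from the paper. The paper sets up the short exact sequence $0\to e_v\bC\Gamma e_w\xrightarrow{\gamma} e_v\bC\Gamma' e_w\to Q\to 0$, maps it to the analogous sequence for $\Hom$'s, and shows the induced map $k:Q\to H^1_Y(\scL_v^\vee\otimes\scL_w)$ is an isomorphism; the snake lemma then gives $\operatorname{coker} f\cong\operatorname{coker} g$, hence the biconditional. Injectivity of $k$ is obtained not by cancellation inside $\Gamma'$ but by passing to the McKay quiver: one compares with the corresponding diagram for $\Lambda,\Lambda'$, where $k'$ is already known to be an isomorphism by Proposition \ref{prop:surjective}, and checks that $Q$ injects into $Q'$ via the map $h$. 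Your approach bypasses the diagram and the comparison with $\Lambda$ by proving directly that every path in $\Gamma'$ of non-negative $z$-weight is equivalent to a path of $\Gamma$; this is in fact the combinatorial content underlying the paper's identification of $Q$ with the span of ``reverse arrows, no $D$-arrows'' paths, so the two arguments are closer than they look.

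Your cancellation sketch needs one clarification to go through. You locate a reverse arrow $\overline{a}$ and some forward $D$-arrow in $p'$ and propose to slide them together. But an arbitrary $D$-arrow $a'$ is not one of the inverted special $z$-arrows, so $a'\overline{a}$ need not simplify. The point of Lemma \ref{lemma:cancel} (and its dual) is precisely this conversion: if $q_j$ is the segment immediately following $\overline{a_j}$ and its leftmost $D$-arrow is $a'$, write $q_j=r_1a'r_2$ with $r_1$ $D$-free from the source $s(a_j)$; the dual lemma gives $r_1a'\equiv a_j r_1'$, producing the specific special arrow $a_j$, and now $\overline{a_j}q_j\equiv r_1'r_2$. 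Symmetrically, if the $D$-arrow lies in $q_{j-1}$ one pushes the rightmost $D$-arrow to the sink $t(a_j)$ and obtains $a_j$ on the right. With this said, your induction on the number of reverse arrows is valid: since the total $z$-weight is $\ge 0$, some $q_j$ contains a $D$-arrow, and one cancellation step as above reduces the count.
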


\begin{proof}
Take a pair $(v, w)$ of vertices of $\Gamma$ and
consider the following commutative diagram
$$
\begin{CD}
 0 @>>> e_v \bC\Gamma e_w 
   @>\gamma>> e_{v} \bC\Gamma' e_{w}
   @>\delta>> Q
   @>>> 0 \\
  @. @VfVV @VgVV @VkVV @. \\
 0 @>>> \Hom(\scL_v, \scL_w)
   @>\alpha>> \Hom(\scL'_{v}, \scL'_{w})
   @>\beta>> H^1_Y(\scL_v^{\vee}\otimes \scL_w)
   @>>> 0
\end{CD}
$$
where $Q$ is defined as the cokernel of $\gamma$.
The second row is exact by \eqref{equation:long-exact} and our assumption.
Moreover, $f$ and $g$ are injective by consistency and
hence the first row is also exact.
The map $k$ is defined so that the diagram is commutative,
and it suffices to show that $k$ is an isomorphism.

In the proof of the surjectivity of
\eqref{equation:restriction} ($=\beta$),
we show that $\beta \circ g$ is surjective and
hence $k$ is surjective.
To see that $k$ is injective,
consider the following commutative diagram
$$
\begin{CD}
 0 @>>> e_v \bC\Gamma e_w
   @>\gamma>> e_{v} \bC\Gamma' e_{w}
   @>\delta>> Q
   @>>> 0 \\
  @. @VVV @ViVV @VjVV @. \\
 0 @>>> e_{\phi_\frakc(v)} \bC\Lambda e_{\phi_\frakc(w)}
   @>>> e_{\phi_\frakc(v)} \bC\Lambda' e_{\phi_\frakc(w)}
   @>\delta'>> Q'
   @>>> 0 \\
  @. @V{f'}VV @V{g'}VV @V{k'}VV @. \\
 0 @>>> \Hom(\scR_{\phi_\frakc(v)}, \scR_{\phi_\frakc(w)})
   @>>> \Hom(\scR'_{\phi_\frakc(v)}, \scR'_{\phi_\frakc(w)})
   @>>> H^1_Y(\scR_{\phi_\frakc(v)}^{\vee}\otimes \scR_{\phi_\frakc(w)})
   @>>> 0
\end{CD}
$$
where $\Lambda$ is the McKay quiver whose vertices are large hexagons.
Here $k'$ is an isomorphism since $f'$ and $g'$ are isomorphisms.

By Lemma \ref{lemma:cancel},
any path in $\bC\Gamma' \setminus \bC \Gamma$
is equivalent to a path that contains an inverse arrow
in the intersection of the two zigzag paths
(corresponding to ``multiplication by $z^{-1}$'')
but not arrows in the corner perfect matching $D$.
This implies that $Q$ (resp.~$Q'$) is isomorphic
to the subspace of $e_{v} \bC\Gamma' e_{w}$
(resp.~$e_{\phi_\frakc(v)'} \bC\Lambda' e_{\phi_\frakc(w)'}$)
spanned by (the classes of) paths that contain inverse arrows
but not arrows contained in $D$.
Therefore, the injectivity of $j$ is reduced to the injectivity of $i$,
which follows from Lemma \ref{prop:inj}.
Now $H^1_Y(\scR_{\phi_\frakc(v)}^{\vee}\otimes \scR_{\phi_\frakc(w)})$ coincides
with $H^1_Y(\scL_v^{\vee}\otimes \scL_w)$
and $k=k'\circ j$ is injective.
\end{proof}

\section{Proof of the derived equivalence}
 \label{sc:der_equiv}

We prove Theorem \ref{th:main} in this section.
Let $G$ be a consistent dimer model.
Since any lattice polygon $\Delta$ can be turned into a triangle
with unit area
by successively removing corners,
one can find a sequence
$$
 G = G_0 \mapsto G_1 \mapsto \cdots \mapsto G_k
$$
of consistent dimer models,
where each step is given by the operation
in Theorem \ref{th:removal},
and the characteristic polygon of $G_k$
is the triangle with unit area.

The dimer model $G_k$ is determined uniquely
by its characteristic polygon
by Proposition \ref{prop:triangle}.
The corresponding quiver is
the McKay quiver for the trivial group, and
the path algebra is isomorphic
to the polynomial algebra in three variables.
In this case,
the moduli space is the affine space and
the tautological bundle is the trivial line bundle,
so that the conditions (\bfT)+(\bfE) are clearly satisfied.

Assume the existence of a derived-equivalence
$$
 \Phi(-)
   = \bR \Gamma \lb \lb \bigoplus_v \scL_v \rb \otimes - \rb
   : D^b \coh \scM_{i,\theta} \to D^b \module \bC \Gamma_i
$$
for some $i > 0$
between the quiver $\Gamma_i$
associated with the dimer model $G_i$
and the moduli space $\scM_{i, \theta}$
of $\theta$-stable representations
of $\Gamma_i$ for some generic $\theta$.
Then we change the stability parameter
to the one described in Proposition \ref{prop:corner}.
This preserves the conditions (\bfT)+(\bfE)
by \cite[Theorem 1.1]{Ishii-Ueda_DMCR}.

Then we use the `if' part of Theorem \ref{th:induction}
to show that conditions (\bfT)+(\bfE) hold
for $G_{i-1}$ for some generic stability parameter.

By repeating this process,
we show that the conditions (\bfT)+(\bfE) hold
for $G$ with any generic stability parameter,
and Theorem \ref{th:main} is proved.

\bibliographystyle{amsalpha}
\bibliography{bibs}

\noindent
Akira Ishii

Department of Mathematics,
Graduate School of Science,
Hiroshima University,
1-3-1 Kagamiyama,
Higashi-Hiroshima,
739-8526,
Japan

{\em e-mail address}\ : \ akira@math.sci.hiroshima-u.ac.jp

\ \\

\noindent
Kazushi Ueda

Department of Mathematics,
Graduate School of Science,
Osaka University,
Machikaneyama 1-1,
Toyonaka,
Osaka,
560-0043,
Japan.

{\em e-mail address}\ : \  kazushi@math.sci.osaka-u.ac.jp

\end{document}